\newcommand{\Toz}{T^{1,0}}
\newcommand{\Tzo}{T^{0,1}}
\newcommand{\Lb}{\bar L}
\newcommand{\Lba}{ {\bar L}^* }
\newcommand{\Lbap}{ {\bar L}^{*,+} }
\newcommand{\Lbam}{ {\bar L}^{*,-} }
\newcommand{\Lbal}{ {\bar L}^{*,\lam} }
\newcommand{\dbarbsp}{\dbarb^{*,+}}
\newcommand{\dbarbsm}{\dbarb^{*,-}}
\newcommand{\dbarbspm}{\dbars_{b,\pm}}
\newcommand{\dbarbsl}{\dbarb^{*,\lam}}
\newcommand{\dbarbst}{\dbar^{*}_{b,t}}
\newcommand{\Boxbt}{\Box_{b,t}}
\newcommand{\Qbpm}{Q_{b,\pm}}
\newcommand{\Qbpmp}{\Qbpm(\vp,\vp)}
\newcommand{\Qbp}{Q_{b,+}}
\newcommand{\Qbm}{Q_{b,-}}
\newcommand{\Qbo}{Q_{b,0}}
\newcommand{\Qblp}{Q_{b,\lam}(\vp,\vp)}
\newcommand{\Qbt}{Q_{b,t}}
\newcommand{\ob}{\,\bar\omega}
\newcommand{\omb}{\bar\omega}
\newcommand{\Cp}{\mathcal{C}^+}
\newcommand{\Cm}{\mathcal{C}^-}
\newcommand{\Co}{\mathcal{C}^0}
\newcommand{\Cpn}{\mathcal{C}^+_{\nu}}
\newcommand{\Cmn}{\mathcal{C}^-_{\nu}}
\newcommand{\Con}{\mathcal{C}^0_{\nu}}
\newcommand{\Com}{\mathcal{C}^0_{\mu}}
\newcommand{\tCon}{\tilde{\mathcal{C}}^0_\nu}
\newcommand{\tCpm}{ {\tilde{\mathcal{C}}}^+_{\mu} }
\newcommand{\tCmm}{\tilde{\mathcal{C}}^-_\mu}
\newcommand{\psp}{\psi^+}
\newcommand{\psm}{\psi^-}
\newcommand{\pso}{\psi^0}
\newcommand{\pspl}{\psi^+_{t}}
\newcommand{\psml}{\psi^-_{t}}
\newcommand{\psol}{\psi^0_{t}}
\newcommand{\tpsplm}{{\tilde\psi}^+_{\mu,t}}
\newcommand{\tpsmlm}{{\tilde\psi}^-_{\mu,t}}
\newcommand{\Pso}{\Psi^0}
\newcommand{\Pspl}{\Psi^+_{t}}
\newcommand{\Psml}{\Psi^-_{t}}
\newcommand{\Psol}{\Psi^0_{t}}
\newcommand{\tPsol}{{\tilde\Psi}^0_{t}}
\newcommand{\Pspla}{ (\Psi^+_{t})^* }
\newcommand{\Psmla}{ (\Psi^-_{t})^* }
\newcommand{\Psola}{ (\Psi^0_{t})^* }
\newcommand{\Pspln}{\Psi^+_{\nu,t}}
\newcommand{\Psmln}{\Psi^-_{\nu,t}}
\newcommand{\Psoln}{\Psi^0_{\nu,t}}
\newcommand{\tPsoln}{{\tilde\Psi}^0_{\nu,t}}
\newcommand{\Psplan}{ (\Psi^+_{\nu,t})^* }
\newcommand{\Psmlan}{ (\Psi^-_{\nu,t})^* }
\newcommand{\Psolan}{ (\Psi^0_{\nu,t})^* }
\newcommand{\Psplm}{\Psi^+_{\mu,t}}
\newcommand{\Psmlm}{\Psi^-_{\mu,t}}
\newcommand{\Psolm}{\Psi^0_{\mu,t}}
\newcommand{\tPsplm}{{\tilde\Psi}^+_{\mu,t}}
\newcommand{\tPsmlm}{{\tilde\Psi}^-_{\mu,t}}
\newcommand{\Psplam}{ (\Psi^+_{\mu,t})^* }
\newcommand{\Jt}{ {\mathcal{J}}_{\vartheta} }
\newcommand{\tJt}{ {}^t\!{\mathcal{J}}_{\vartheta} }
\newcommand{\lp}{{\lambda^{+}}}
\newcommand{\lm}{{\lambda^{-}}}
\newcommand{\zn}{\zeta_\nu}
\newcommand{\tzn}{ {\tilde\zeta_{\nu}} }
\newcommand{\zm}{\zeta_\mu}
\newcommand{\tzm}{ {\tilde\zeta_{\mu}} }
\newcommand{\tz}{{\tilde\zeta}}
\newcommand{\normlplm}{\norm_{\lp,\lm}}
\newcommand{\normpm}{\norm_{\pm}}
\newcommand{\sumn}{\sum_{\nu}}
\newcommand{\summ}{\sum_{\mu}}
\newcommand{\vpn}{\vp^\nu}
\newcommand{\la}{\langle}
\newcommand{\ra}{\rangle}
\newcommand{\ralplm}{\rangle_{\lp,\lm}}
\newcommand{\rapm}{\rangle_{\pm}}
\newcommand{\Hpm}{\mathcal{H}_{\pm}}
\newcommand{\Hpmp}{{}^\perp\Hpm}
\newcommand{\Boxbpm}{\Box_{b,\pm}}
\newcommand{\Gqdt}{G_{q,t}^\delta}
\renewcommand{\H}{\mathcal{H}}
\newcommand{\bfem}[1]{\textbf{\emph{#1}}}
\newcommand{\pnorm}[1]{\left\Vert#1\right\Vert}
\newtheorem{thm}{Theorem}[section]
\newtheorem{prop}[thm]{Proposition}
\newtheorem{lem}[thm]{Lemma}
\theoremstyle{definition}
\newtheorem{defn}[thm]{Definition}
\theoremstyle{remark}
\newtheorem{rem}[thm]{Remark}
\begin{document}

\title [Regularity results for $\dbarb$]{Regularity results for $\dbarb$ on CR-manifolds of hypersurface type}

\author{Phillip S. Harrington and Andrew Raich}

\thanks{The first author is partially supported by NSF grant DMS-1002332
second author is partially supported by NSF grant DMS-0855822}

\address{Department of Mathematical Sciences, SCEN 327, 1 University of Arkansas, Fayetteville, AR 72701}
\email{psharrin@uark.edu \\ araich@uark.edu}

\maketitle

%
%
\section{Introduction and Results}

In this article, we introduce a class of embedded CR manifolds satisfying a geometric condition that we call weak
$Y(q)$. For such manifolds, we show that $\dbarb$ has closed range on $L^2$ and that the complex Green operator is
continuous on $L^2$. Our methods involves building a weighted norm from a microlocal decomposition. We also prove that
at any Sobolev level there is a weight such that the complex Green operator inverting the weighted Kohn Laplacian is
continuous. Thus, we can solve the $\dbarb$-equation in $C^\infty$.

Let $M^{2n-1}\subset\mathbb{C}^N$ be a $C^\infty$ compact, orientable CR-manifold, $N\geq n$.  We say that $M$ is of
hypersurface type if the CR-dimension of $M$ is $n-1$, so that the complex tangent bundle of $M$ splits into a complex
subbundle of dimension $n-1$, the conjugate of the complex subbundle, and one totally real direction.  When the de Rham
complex on $M$ is restricted to the complex subbundle, we obtain the $\dbarb$ complex.

When $M$ is the boundary of a pseudoconvex domain, closed range for $\dbarb$ was obtained in \cite{Shaw85},
\cite{Kohn86}, and \cite{BoSh86}.  This work was extended to pseudoconvex manifolds of hypersurface type by Nicoara in
\cite{Nic06}.  When the domain is not pseudoconvex, there is a condition $Y(q)$ which is known to imply subelliptic
estimates for the complex Green operator acting on $(0,q)$ forms (see \cite{FoKo72} or \cite{ChSh01} for details on
$Y(q)$).  In this article, we will adapt the microlocal analysis used in \cite{Nic06,Rai10c} to obtain closed range
results for $\dbarb$ on manifolds satisfying weak $Y(q)$.

When $M$ is a CR-manifold of hypersurface type, the tangent space of $M$ can be spanned by $(1,0)$ vector fields $L_1,\ldots,L_{n-1}$, their conjugates, and a totally imaginary vector field $T$ spanning the remaining direction.  If $\dbarbs$ denotes the Hilbert space adjoint of $\dbarb$ with respect to the $L^2$ inner product on $M$, we have a basic identity for $(0,q)$ forms $\phi$ of the form
\[
  \pnorm{\dbarb\phi}^2+\pnorm{\dbarbs\phi}^2=\sum_{J\in\mathcal{I}_q}\sum_{j=1}^{n-1}\pnorm{\bar{L}_j\phi_J}^2+\sum_{I\in\mathcal{I}_{q-1}}\sum_{j,k=1}^{n-1}\Rre (c_{jk}T\phi_{jI},\phi_{kI})+\cdots
\]
where $c_{jk}$ denotes the Levi-form of $M$ in local coordinates (see for example the proof of Theorem 8.3.5 in \cite{ChSh01})
and $\I_q$ is the set of increasing $q$-tuples.  The difficulty in using the basic identity to prove regularity estimates for $\dbarb$ rests in controlling the
$\Rre (c_{jk}T\phi_{jI},\phi_{kI})$ terms. When $M$ satisfies $Y(q)$, integration by parts can be performed on the gradient term in such a way that
\[
\pnorm{\dbarb\phi}^2+\pnorm{\dbarbs\phi}^2\geq C(\sum_{J\in\mathcal{I}_q}\sum_{j=1}^{n-1}\pnorm{\bar{L}_j\phi_J}^2+\sum_{J\in\mathcal{I}_q}\sum_{j=1}^{n-1}\pnorm{L_j\phi_J}^2)+\cdots.
\]
Using H\"ormander's classic result on sums of squares \cite{Hor67}, this can be used to estimate $\pnorm{\phi}_{1/2}$.  On manifolds where the Levi-form degenerates, it may still be possible to choose good local coordinates so that with a suitable integration by parts, there is the estimate
\[
\pnorm{\dbarb\phi}^2+\pnorm{\dbarbs\phi}^2\geq \sum_{J\in\mathcal{I}_q}\sum_{j=m+1}^{n-1}\pnorm{\bar{L}_j\phi_J}^2+\sum_{J\in\mathcal{I}_q}\sum_{j=1}^{m}\pnorm{L_j\phi_J}^2+\cdots.
\]
for some integer $m$.  Unfortunately, since such an estimate no longer bounds all of the $L_j$ and $\bar{L}_j$
derivatives, it is not possible to control $\pnorm{\phi}_{1/2}$.  Hence, a weight function is needed to provide some
positivity in the $L^2$-norm. The key idea in \cite{Nic06,Rai10c} is to microlocalize and decompose a form $\phi$ into
pieces whose Fourier transform is supported on specific regions. The authors then build a weighted norm based on the
decomposition.  In this weighted $L^2$-space, the  $c_{jk}T$ terms are under control and a basic estimate holds. If the
weight function is $t|z|^2$, then Nicoara proves that $\dbarb$ has closed range in $L^2$ and in $H^s$, and if the
weight function is obtained from property $(P_q)$, then Raich shows that the complex Green operator is compact on
$H^s(M)$ for all $s\geq 0$.

It is already known through an integration by parts argument (see the work of Ahn, Baracco and Zampieri \cite{AhBaZa06}
or Zampieri \cite{Zam08}) that  local regularity estimates hold on a class of domains where the Levi-form has
degeneracies and mixed signature (known as $q$-pseudoconvex domains).  Our method is to apply microlocal analysis  to
the integration by parts argument used in the $q$-pseudoconvex case to obtain a more general sufficient condition for
(global) $L^2$ and Sobolev space estimates.

Our main results are the following.

%
%
\begin{thm}\label{thm:main theorem for unweighted}
Let $M^{2n-1}$ be a $C^\infty$ compact, orientable weakly $Y(q)$ CR-manifold embedded in $\C^N$, $N\geq n$ and $1\leq q \leq n-2$. Then the following hold:
\begin{enumerate}\renewcommand{\labelenumi}{(\roman{enumi})}
\item The operators $\dbarb: L^2_{0,q}(M)\to L^2_{0,q+1}(M)$ and $\dbarb: L^2_{0,q-1}(M)\to L^2_{0,q}(M)$ have closed range;
\item The operators $\dbarbs: L^2_{0,q+1}(M)\to L^2_{0,q}(M)$ and $\dbarbs: L^2_{0,q}(M)\to L^2_{0,q-1}(M)$ have closed range;
\item The Kohn Laplacian defined by $\Box_b = \dbarb\dbarbs + \dbarbs\dbarb$ has closed range on $L^2_{0,q}(M)$;
\item The complex Green operator $G_q$  is continuous on $L^2_{0,q}(M)$;
\item The canonical solution operators
for $\dbarb$, $\dbarbs G_q:L^2_{0,q}(M)\to L^2_{0,q-1}(M)$ and\\ $G_q\dbarbst : L^2_{0,q+1}(M)\to L^2_{0,q}(M)$,
are continuous;
\item The canonical solution operators for $\dbarbs$, $\dbarb G_q:L^2_{0,q}(M)\to L^2_{0,q+1}(M)$
and\\ $G_q\dbarb : L^2_{0,q-1}(M)\to L^2_{0,q}(M)$, are continuous;
\item The space of harmonic forms $\H^q(M)$, defined to be the $(0,q)$-forms annihilated by $\dbarb$ and $\dbarbs$ is finite dimensional;
\item If $\tilde{q}=q$ or $q+1$ and $\alpha\in L^2_{0,\tilde{q}}(M)$ so that  $\dbarb \alpha =0$, then there exists $u\in L^2_{0,\tilde{q}-1}(M)$ so that
\[
\dbarb u = \alpha;
\]
\item The Szeg\"o projections $S_q = I - \dbarbs \dbarb G_q$ and $S_{q-1}    = I - \dbarbs G_q \dbarb$ are continuous on
$L^2_{0,q}(M)$ and $L^2_{0,q-1}(M)$, respectively.
\end{enumerate}
\end{thm}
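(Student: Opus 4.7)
The plan follows the microlocal weighted-norm framework of Nicoara \cite{Nic06} and Raich \cite{Rai10c}, modified so that the curvature terms in the basic identity from the introduction are handled by the integration-by-parts device of \cite{AhBaZa06,Zam08} rather than by the pointwise positivity of the Levi form on $(0,q)$-forms that the classical condition $Y(q)$ provides. Concretely, I would first construct a pseudodifferential partition of unity $\{\Pspl,\Psml,\Psol\}$ on the cotangent bundle of $M$, with $\Pspl$ (resp.\ $\Psml$) supported in a conic neighborhood of the positive (resp.\ negative) characteristic direction of $T$, and $\Psol$ supported away from the characteristic set, where $\Boxbt$ is elliptic. Against this decomposition I build a weighted inner product $\Qbt(\phi,\phi)$ by placing the weight $e^{-t|z|^2}$ on the $\Pspl$ component and the weight $e^{+t|z|^2}$ on the $\Psml$ component (with no weight on $\Psol$), so that on the compact manifold $M$ the weighted and unweighted $L^2$-norms are equivalent for each fixed $t$.

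The main step is a weighted basic estimate of the form
\[
  \pnorm{\phi}^2 \;\lesssim\; \Qbt(\dbarb\phi,\dbarb\phi)+\Qbt(\dbarbs\phi,\dbarbs\phi)+\pnorm{\phi}_{-1}^2,
\]
valid for $(0,q)$-forms $\phi$ in the domain of $\dbarbs$ once $t$ is chosen large enough. To prove it, I would substitute the microlocal decomposition of $\phi$ into the basic identity and integrate by parts in the $T$-direction on the $\Pspl$ piece (with weight $e^{-t|z|^2}$) and in the opposite direction on the $\Psml$ piece (with weight $e^{+t|z|^2}$). The $\Rre(c_{jk}T\phi_{jI},\phi_{kI})$ terms are then replaced, modulo lower-order errors, by expressions controlled by $t|z|^2$ together with the algebraic combination of Levi eigenvalues appropriate to a $(0,q)$-form on each microlocal piece. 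The weak $Y(q)$ hypothesis is designed so that this algebraic combination has the required sign on $\Pspl$ and on $\Psml$ separately, even when $Y(q)$ itself fails globally; the elliptic piece $\Psol$ is handled by G\aa rding's inequality. I expect the principal obstacle to be the bookkeeping of the commutators of $\Pspl,\Psml,\Psol$ with $L_j,\bar{L}_j,\dbarb,\dbarbs$, and the careful verification that weak $Y(q)$ really does produce the correct sign on each microlocal piece.

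Given the basic estimate, conclusions \emph{(i)}--\emph{(ix)} follow from standard Hilbert-space arguments. A compactness/perturbation step using Rellich absorbs the $\pnorm{\phi}_{-1}^2$ error on $(\ker\Box_b)^\perp$, upgrading the estimate to $\pnorm{\phi}^2 \lesssim \Qbt(\dbarb\phi,\dbarb\phi)+\Qbt(\dbarbs\phi,\dbarbs\phi)$ there; this yields closed range for $\dbarb$ and $\dbarbs$ (items \emph{(i)}, \emph{(ii)}) and hence for $\Box_b$ in \emph{(iii)} via the Hodge-type decomposition $L^2_{0,q}(M)=\H^q(M)\oplus\overline{\mathrm{Range}\,\dbarb}\oplus\overline{\mathrm{Range}\,\dbarbs}$. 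The Green operator $G_q$ in \emph{(iv)} is then defined as the bounded inverse of $\Box_b$ on $\H^q(M)^\perp$ (extended by zero on $\H^q(M)$), with continuity from the open mapping theorem. Parts \emph{(v)}, \emph{(vi)}, and \emph{(ix)} are continuous compositions of bounded operators, and \emph{(viii)} is the equivalent formulation of closed range, namely $u=\dbarbs G_{\tilde q}\alpha$ solves $\dbarb u=\alpha$ when $\dbarb\alpha=0$. Finally, for \emph{(vii)} the basic estimate restricted to $\H^q(M)$ collapses to $\pnorm{\phi}^2\lesssim\pnorm{\phi}_{-1}^2$; since $\H^q(M)$ is linear and the inclusion $L^2\hookrightarrow H^{-1}$ is compact by Rellich, any $L^2$-bounded sequence in $\H^q(M)$ has an $L^2$-convergent subsequence, forcing $\dim\H^q(M)<\infty$.
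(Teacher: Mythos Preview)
Your plan is essentially the paper's approach: the microlocal weighted norm of \cite{Nic06,Rai10c}, a basic estimate obtained by combining the $q$-pseudoconvex integration-by-parts device of \cite{AhBaZa06,Zam08} with weak $Y(q)$ on each microlocal piece, and then the standard Hilbert-space package for (i)--(ix) (Rellich for finite-dimensional harmonics, H\"ormander's closed-range criteria, Hodge decomposition, and bounded compositions).

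Two mechanical corrections to keep you on track. First, the integration by parts is not ``in the $T$-direction'': one integrates in the tangential directions, converting $\|\bar L_j\varphi_J\|_\lambda^2$ to $\|\bar L_j^{*,\lambda}\varphi_J\|_\lambda^2$ for $1\le j\le m$, which produces the $T$-terms in the combination $\sum_{I}\sum_{j,k}(c_{jk}T\varphi_{jI},\varphi_{kI})_\lambda-\sum_{J}\sum_{j\le m}(c_{jj}T\varphi_J,\varphi_J)_\lambda$; weak $Z(q)$ says the sum of any $q$ eigenvalues of $h^+_{jk}=c_{jk}-\frac{1}{q}\delta_{jk}\sum_{\ell\le m}c_{\ell\ell}$ is nonnegative, and it is the \emph{sharp G\aa rding inequality} (applied to this matrix and the first-order operator $T$, whose symbol has a definite sign on $\mathcal{C}^\pm$) that converts the $T$-terms into something bounded below by $tA$ times a nonnegative quadratic form. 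Second, the $\Psol$ piece is controlled by genuine ellipticity of $\Box_b$ away from the characteristic set, not by G\aa rding. Finally, the sign of the weight on each microlocal piece is not fixed a priori: the paper sets $\lambda^+=\pm t|z|^2$ depending on whether $m_q<q$ or $m_q>q$ (and dually for $\lambda^-$), so your prescription $e^{\mp t|z|^2}$ covers only the case $m_q<q$, $m_{n-1-q}<n-1-q$.
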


These results will be obtained by studying a family of weighted operators with respect to a norm $\norm\phi\norm_t$ defined in terms of the weights $e^{t|z|^2}$
and  $e^{-t|z|^2}$ and the microlocal decomposition of $\phi$.  For such operators, we will also be able to obtain Sobolev space estimates, as follows:

%
%
\begin{thm}\label{thm:main theorem for weighted spaces}
Let $M^{2n-1}$ be a $C^\infty$ compact, orientable weakly $Y(q)$ CR-manifold embedded in $\C^N$, $N\geq n$.
For $s\geq 0$ there exists $T_s\geq 0$
so that the following hold:
\begin{enumerate}\renewcommand{\labelenumi}{(\roman{enumi})}
\item The operators $\dbarb: L^2_{0,q}(M)\to L^2_{0,q+1}(M)$ and $\dbarb: L^2_{0,q-1}(M)\to L^2_{0,q}(M)$ have closed range
with respect to $\norm\cdot\norm_t$. Additionally, for any $s>0$ if $t\geq T_s$, then
$\dbarb: H^s_{0,q}(M)\to H^s_{0,q+1}(M)$ and $\dbarb: H^s_{0,q-1}(M)\to H^s_{0,q}(M)$ have closed range;
\item  The operators $\dbarbst: L^2_{0,q+1}(M)\to L^2_{0,q}(M)$ and $\dbarbst: L^2_{0,q}(M)\to L^2_{0,q-1}(M)$ have closed range
with respect to $\norm\cdot\norm_t$. Additionally, if $t\geq T_s$, then
$\dbarbst: H^s_{0,q+1}(M)\to H^s_{0,q}(M)$ and $\dbarbst: H^s_{0,q}(M)\to H^s_{0,q-1}(M)$ have closed range;
\item The Kohn Laplacian defined by $\Boxbt = \dbarb\dbarbst + \dbarbst\dbarb$ has closed range on $L^2_{0,q}(M)$
(with respect to $\norm\cdot\norm_t$) and also on
$H^s_{0,q}(M)$ if $t\geq T_s$;
\item The space of harmonic forms $\H^q_t(M)$, defined to be the $(0,q)$-forms annihilated by $\dbarb$ and $\dbarbst$ is finite dimensional;
\item The complex Green operator $G_{q,t}$ is continuous on $L^2_{0,q}(M)$
(with respect to $\norm\cdot\norm_t$) and also on
$H^s_{0,q}(M)$ if $t\geq T_s$;
\item The canonical solution operators
for $\dbarb$, $\dbarbst G_{q,t}:L^2_{0,q}(M)\to L^2_{0,q-1}(M)$ and $G_{q,t}\dbarbst : L^2_{0,q+1}(M)\to L^2_{0,q}(M)$ are continuous (with respect to
$\norm\cdot\norm_t$).
Additionally,\\ $\dbarbst G_{q,t}:H^s_{0,q}(M)\to H^s_{0,q-1}(M)$ and $G_{q,t}\dbarbst : H^s_{0,q+1}(M)\to H^s_{0,q}(M)$
are continuous if $t\geq T_s$.
\item The canonical solution operators for $\dbarbst$,
$\dbarb G_{q,t}:L^2_{0,q}(M)\to L^2_{0,q+1}(M)$
and $G_{q,t}\dbarb : L^2_{0,q-1}(M)\to L^2_{0,q}(M)$ are continuous (with respect to $\norm\cdot\norm_t$).
Additionally,\\ $\dbarb G_{q,t}:H^s_{0,q}(M)\to H^s_{0,q+1}(M)$
and $G_{q,t}\dbarb : H^s_{0,q-1}(M)\to H^s_{0,q}(M)$ are continuous if $t\geq T_s$.
\item The Szeg\"o projections $S_{q,t} = I - \dbarbst\dbarb G_{q,t}$ and $S_{q-1,t} = I - \dbarbst G_{q,t} \dbarb$ are continuous on
$L^2_{0,q}(M)$ and $L^2_{0,q-1}(M)$, respectively and with respect to $\norm\cdot\norm_t$. Additionally,
if $t\geq T_s$, then $S_{q,t}$ and $S_{q-1,t}$ are continuous on
$H^s_{0,q}$ and $H^s_{0,q-1}$, respectively.
\item  If $\tilde q = q$ or $q+1$ and
$\alpha\in H^s_{0,q}(M)$ so that  $\dbarb \alpha =0$ and $\alpha \perp \H^{\tilde q}_t$ (with respect to $\norm\cdot\norm_t$),
then there exists $u\in H^s_{0,\tilde q-1}(M)$ so that
\[
\dbarb u = \alpha;
\]
\item  If $\tilde q = q$ or $q+1$ and $\alpha\in C^\infty_{0,\tilde q}(M)$ satisfies $\dbarb\alpha=0$ and $\alpha \perp \H^{\tilde q}_t$
(with respect to $\la \cdot, \cdot \ra_t$),
then there exists $u\in C^\infty_{0,\tilde q-1}(M)$ so that
\[
\dbarb u = \alpha.
\]
\end{enumerate}
\end{thm}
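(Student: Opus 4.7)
The plan is to follow the microlocal-weight architecture pioneered by Nicoara and Raich, adapted to weak $Y(q)$ via an integration-by-parts argument in the spirit of Ahn-Baracco-Zampieri. All ten conclusions will fall out of a single weighted basic estimate for the Kohn Laplacian $\Boxbt$, upgraded to Sobolev scales by elliptic regularization.

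The first step is to set up the weighted inner product $\la\cdot,\cdot\ra_t$. Partition the cosphere bundle into conic regions $\Cp$, $\Cm$, $\Co$ according to the sign of the symbol of $T$, and use pseudodifferential cutoffs $\Pspl$, $\Psml$, $\Psol$ of order zero to split $\phi = \Pspl\phi + \Psml\phi + \Psol\phi$. The weighted norm puts weight $e^{-t|z|^2}$ on $\Pspl\phi$, weight $e^{t|z|^2}$ on $\Psml\phi$, and unit weight on $\Psol\phi$. The sign choices are forced by the basic identity: they make the weight contributions reinforce the Levi-form term $\Rre(c_{jk}T\phi_{jI},\phi_{kI})$ on whichever microlocal region $T$ carries the favorable symbol sign, so that after an Ahn-Baracco-Zampieri integration by parts the degenerate eigendirections of the Levi form are absorbed by the weight's positivity.

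The main obstacle, and the principal analytic step, is proving the weighted basic estimate
\[
\pnorm{\dbarb\phi}_t^2 + \pnorm{\dbarbst\phi}_t^2 \gtrsim \pnorm{\phi}_t^2
\]
together with gradient control in good directions, for $\phi\in\mathrm{dom}(\dbarb)\cap\mathrm{dom}(\dbarbst)$ orthogonal to $\H^q_t$ and $t$ sufficiently large. Starting from the microlocalized basic identity on each block, weak $Y(q)$ supplies the local frame in which the positive and negative eigenvalues of the Levi form can be paired against the sign of $T$ on $\Cp$ and $\Cm$ respectively, while the weight $t|z|^2$ supplies the missing positivity in degenerate directions. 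The real effort lies in controlling the commutator errors $[\dbarb,\Pspl]$ and $[\dbarbst,\Pspl]$, the lower-order corrections from $\partial(|z|^2)$, and the low-frequency $\Psol\phi$ piece, which is handled by treating $|\xi|$ as bounded and extracting an elliptic-type gain. Once this estimate is in hand, items (i)--(vii) follow from standard abstract arguments: closed range for $\dbarb$, $\dbarbst$, and $\Boxbt$; the definition of $G_{q,t}$ as the inverse of $\Boxbt$ on $(\H^q_t)^\perp$ extended by zero; finite-dimensionality of $\H^q_t$; and the operator identities giving the canonical solutions and Szeg\"o projections.

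For (viii) and (ix), the Sobolev version of the basic estimate is obtained by elliptic regularization: perturb $\Boxbt$ by $\delta\Lambda$ with $\Lambda$ elliptic, prove a $\delta$-uniform estimate in tangential $H^s$ norms, and let $\delta \to 0$. The error terms from commuting $\Pspl$ and $\dbarb$ with tangential derivatives of order $s$ pick up coefficients like $t^{-1}$ times derivatives of $|z|^2$, so choosing $t \geq T_s$ absorbs them and closes the estimate at level $s$. The smooth conclusion (x) then follows by solving at every Sobolev scale: for each $s$, apply (ix) with weight $t_s \geq \max(t, T_s)$ to obtain $u_s \in H^s$ with $\dbarb u_s = \alpha$, after verifying that $\alpha \perp \H^{\tilde q}_t$ transfers to $\alpha \perp \H^{\tilde q}_{t_s}$ for the relevant family of weighted harmonic spaces. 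A Mittag-Leffler-type correction, using closed range of $\dbarb$ at each Sobolev level to adjust the $u_s$ within $\ker\dbarb$, assembles them into a single $u \in \bigcap_{s \geq 0} H^s = C^\infty_{0,\tilde q - 1}$ solving $\dbarb u = \alpha$.
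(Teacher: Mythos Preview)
Your proposal is correct and takes essentially the same route as the paper: microlocal split into $\Cp,\Cm,\Co$ with opposite-sign weights, the basic estimate via the Ahn--Baracco--Zampieri integration by parts plus a G{\aa}rding argument, elliptic regularization for $H^s$-continuity of $G_{q,t}$, and the Mittag-Leffler telescoping for part (x). One factual correction: the $\Co$ region is not low-frequency (it extends to $|\xi|=\infty$); rather $|\xi_{2n-1}|\lesssim|\xi'|$ there, so $Q_{b,t}$ is genuinely elliptic on $\Co$-supported forms and gains a full derivative---this ellipticity, not boundedness of $|\xi|$, is what disposes of the $\tPsol$ error terms.
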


\begin{rem} \label{rem:weighted vs. unweighted bound}
We will see below that the proof of Theorem \ref{thm:main theorem for unweighted} follows from Theorem
\ref{thm:main theorem for weighted spaces} and the fact that the weighted and unweighted norms are equivalent.
We will see in the proof of the main theorem that the constants improve as $t\to\infty$. In particular, we will show that
$\| \vp \|_t^2 \leq A_t \Qbt(\vp,\vp)$ where $A_t \to 0$ as $t\to\infty$.
A (weak) consequence is that
if the weight is strong enough, $\dbar$ and $\dbarbs$ have closed range in weighted $L^2$ with a constant that does not depend on the weight. In
the unweighted case, this means the constants may be quite large.
For a more quantitative discussion, see
Remark \ref{rem:weighted vs. unweighted bound 2} below.

Additionally, our results hold for any abstract CR-manifold for which a $q$-compatible function exists. $q$-compatible
functions are defined in Definition \ref{defn:compatible_functions}. They play the analogous role here of CR-plurisubharmonic functions
in \cite{Nic06,Rai10c}.
\end{rem}

In Section \ref{sec:def and not}, we introduce the notion of weak $Y(q)$ manifolds and $q$-compatible functions. In
Section \ref{sec:local coordinates}, we set up the microlocal analysis and build the weighted norm. Additionally, we
compute $\dbarb$ and $\dbarbs$ in local coordinates. In Section \ref{sec:basic estimate}, we adapt the microlocal
analysis in \cite{Nic06,Rai10c} and prove a basic estimate: Proposition \ref{prop:basic estimate}. In Section
\ref{sec:regularity theory}, we use the basic estimate to begin the study of the regularity theory for $\dbarb$, and we
prove Theorems \ref{thm:main theorem for weighted spaces} and \ref{thm:main theorem for unweighted} in Sections
\ref{sec:main theorem weighted} and \ref{sec:main theorem unweighted}, respectively.

%
%
\section{Definitions and Notation}\label{sec:def and not}

%
%
\subsection{CR manifolds and $\dbarb$}
\begin{defn}\label{defn:CR mfld}
Let $M\subset\C^N$ be a $C^\infty$ manifold of real dimension $2n-1$, $n\geq 2$. $M$ is called a \bfem{CR-manifold of hypersurface type} if
$M$ is equipped with a subbundle $T^{1,0}(M)$ of the complexified tangent bundle $\C TM = TM\otimes\C$ so that
\begin{enumerate}\renewcommand{\labelenumi}{(\roman{enumi})}
\item $\dim_{\C} \Toz(M)=n-1$;
\item $\Toz(M)\cap \Tzo(M) = \{0\}$ where $\Tzo(M) = \overline{\Toz}(M)$;
\item $\Toz(M)$ satisfies the following integrability condition: if $L_1,L_2$ are smooth sections of $\Toz(M)$, then so is the commutator $[L_1,L_2]$.
\end{enumerate}
\end{defn}

Since $M$ is a submanifold of $\C^N$, we can generate $\Toz_z(M)$  for $z\in M$ from  the
\bfem{induced CR-structure on $M$}  as
follows:  set
$\Toz_z(M) = \Toz_z(\C^N)\cap T_z(M)\otimes\C$ (under the natural inclusions). Since the complex dimension
of $\Toz_{z}(M)$ is $n-1$ for all $z\in M$, we can let $\Toz(M) =\bigcup_{z\in M} T_{z}^{1,0}(M)$. Observe that conditions
(ii) and (iii) are automatically satisfied in this case.

For the remainder of this article, $M^{2n-1}$ is a  smooth, orientable
CR-manifold of hypersurface type embedded in $\C^N$ for some $N\geq n$.
Let $\Lambda^{0,q}(M)$ be the bundle of $(0,q)$-forms on $M$, i.e., $\Lambda^{0,q}(M) = \bigwedge^q (\Tzo(M)^*)$. Denote the $C^\infty$ sections of
$\Lambda^{0,q}(M)$ by $C^\infty_{0,q}(M)$.


We construct $\dbarb$ using the fact that $M\subset\C^N$. There is a Hermitian inner product on $\Lambda^{0,q}(M)$ given by
\[
(\vp,\psi) = \int_M \langle \vp,\psi\rangle_x\, dV,
\]
where $dV$ is the volume element on $M$ and $\langle \vp,\psi \rangle_x$ is the induced inner product on $\Lambda^{0,q}(M)$.
This metric is compatible with the induced CR-structure, i.e., the vector spaces $\Toz_z(M)$ and $\Tzo_z(M)$
are orthogonal under the inner product.  The involution condition (iii) of Definition \ref{defn:CR mfld} means that $\bar\partial_b$ can be defined as the restriction of the de Rham exterior derivative $d$ to $\Lambda^{(0,q)}(M)$.  The inner product gives rise to an $L^2$-norm $\pnorm{\cdot}_0$, and we also denote the closure of $\dbarb$ in this norm by $\dbarb$ (by an abuse of notation).  In this way, $\dbarb : L^2_{0,q}(M)\to L^2_{0,q+1}(M)$ is a well-defined, closed, densely defined operator, and we define
$\dbarbs:L^2_{0,q+1}(M)\to L^2_{0,q}(M)$ to be the $L^2$-adjoint of $\dbarb$. The Kohn
Laplacian $\Boxb:L^2_{0,q}(M) \to L^2_{0,q}(M)$ is defined as
\[
\Boxb = \dbarbs\dbarb + \dbarb\dbarbs.
\]

%
%
\subsection{The Levi form and eigenvalue conditions}
The induced CR-structure has a local orthonormal basis $L_1,\dots, L_{n-1}$ for the $(1,0)$-vector fields in a neighborhood $U$ of each
point $x\in M$. Let $\omega_1,\dots,\omega_{n-1}$ be the dual basis of $(1,0)$-forms that satisfy $\langle \omega_j, L_k\rangle
=\delta_{jk}$. Then $\Lb_1,\dots, \Lb_{n-1}$ is a local orthonormal basis for the $(0,1)$-vector fields with dual basis $\ob_1,\dots,\ob_{n-1}$
in $U$. Also, $T(U)$ is spanned by $L_1,\dots,L_{n-1}$, $\Lb_1,\dots,\Lb_{n-1}$, and an additional vector field $T$ taken to be
purely imaginary (so $\bar T = -T$). Let $\gamma$ be the purely imaginary global $1$-form on $M$ that
annihilates $\Toz(M)\oplus \Tzo(M)$ and is normalized so that $\langle \gamma, T \rangle =-1$.

\begin{defn}\label{defn:Levi form}
The \bfem{Levi form at a point $x\in M$} is the Hermitian form given by $\langle d\gamma_x, L\wedge \bar L'\rangle$
where $L, L'\in \Toz_x(U)$, $U$ a neighborhood of $x\in M$.
\end{defn}

\begin{defn}\label{defn: pseudoconvex}
We call $M$ \bfem{weakly pseudoconvex} if there exists a
form $\gamma$ such that the Levi form is positive semi-definite at all $x\in M$ and \bfem{strictly pseudoconvex} if there
is a form $\gamma$ such that the Levi form is positive definite at all $x\in M$.
\end{defn}

The following two (standard) definitions are taken from  Chen and Shaw \cite{ChSh01}.
\begin{defn}\label{defn:Z(q) and Y(q)}Let $M$ be an oriented CR-manfiold of real dimension $2n-1$ with $n\geq 2$. $M$ is said to satisfy
\bfem{condition Z(q)}, $1\leq q \leq n-1$, if the Levi form associated with $M$ has at least $n-q$ positive eigenvalues
or at least $q+1$ negative eigenvalues at every boundary point. $M$ is said to satisfy
\bfem{condition Y(q)}, $1\leq q \leq n-1$
if the Levi form has at least either $\max\{n-q,q+1\}$ eigenvalues of the same sign of $\min\{n-q,q+1\}$ pairs of eigenvalues of
opposite signs at every point on $M$.
\end{defn}

Note that $Y(q)$ is equivalent to $Z(q)$ and $Z(n-1-q)$. The necessity of the symmetric requirements for $\dbarb$ at levels $q$ and $n-1-q$
stems from the  duality between $(0,q)$-forms and $(0,n-1-q)$-forms (see \cite{FoKo72} or \cite{RaSt08} for details).

$Z(q)$ and $Y(q)$ are classical conditions and natural extensions of strict pseudoconvexity. We wish, however, for an extension of weak pseudoconvexity.
Let $P\in M$ and $U$ be a special boundary neighborhood. Then there exists an orthonormal basis $L_1,\dots, L_{n-1}$ of $\Toz(U)$. By the Cartan formula (see
\cite{Bog91}, p.14),
\[
\la d\gamma, L_j\wedge \Lb_k \ra = - \la \gamma, [L_j,\Lb_k] \ra.
\]
If
\[
[L_j, \Lb_k] = c_{jk} T \mod \Toz(U)\oplus\Tzo(U),
\]
then $\la d\gamma, L_j\wedge \Lb_k \ra = c_{jk}$. For this reason,
the matrix $(c_{jk})_{1\leq j,k\leq n-1}$ is called the Levi form with respect to $L_1,\dots, L_{n-1}$.

By weakening the definition of $Z(q)$, we obtain:
\begin{defn}\label{defn:weak Z(q)} Let $M$ be a smooth, compact, oriented CR-manifold of hypersurface type of real dimension $2n-1$.
We say $M$ satisfies \bfem{Z(q) weakly at P} if there exists
\begin{enumerate}\renewcommand{\labelenumi}{(\roman{enumi})}
\item  a special boundary neighborhood $U\subset M$ containing $P$;
\item  an integer  $m=m(U)\neq q$;
\item an orthonormal basis $L_1,\dots,L_{n-1}$ of $\Toz(U)$ so that
 $\mu_1+\cdots+\mu_q-(c_{11}+\cdots+c_{mm})\geq 0$ on $U$, where $\mu_1, \dots, \mu_{n-1}$ are the eigenvalues of the Levi form in increasing order.
 \end{enumerate}
  We say that $M$ is \bfem{weakly Z(q)} if $M$
 is $Z(q)$ weakly at $P$ for all $P\in M$ and the condition $m>q$ or $m<q$ is independent of $U\subset M$.
 As above, $M$ satisfies \bfem{Y(q) weakly at P} if $M$ satisfies $Z(q)$ weakly at $P$ and $Z(n-1-q)$ weakly at $P$.
\end{defn}

To see that Definition \ref{defn:weak Z(q)}  generalizes condition $Z(q)$, choose coordinates diagonalizing $c_{jk}$ at $P$ so that $c_{jj}|_P=\mu_j$.  If the Levi-form has at least $n-q$ positive eigenvalues, then $\mu_q>0$, so we can let $m=q-1$ and obtain $\mu_1+\cdots+\mu_q-(c_{11}+\cdots+c_{mm})=\mu_q>0$ at $P$.  If the Levi-form has at least $q+1$ negative eigenvalues, then $\mu_{q+1}<0$, so we can let $m=q+1$ and obtain $\mu_1+\cdots+\mu_q-(c_{11}+\cdots+c_{mm})=-\mu_{q+1}>0$ at $P$.  In either case, the sum is strictly positive at $P$, so the estimate extends to a neighborhood $U$.

The preceding argument also shows that weak-$Z(q)$ is satisfied by domains where the Levi-form is locally diagonalizable and has at least $n-q$ non-negative eigenvalues or $q+1$ non-positive eigenvalues.  However, diagonalizability is not necessary.  Consider the hypersurface in $\mathbb{C}^5$ defined by
$\rho(z)=\Imm z_5+|z_3|^2+|z_4|^2+(\Rre  z_1)(|z_1|^2-2|z_2|^2)$.  Under the coordinates $L_j=\frac{\partial}{\partial z_j}-2i\frac{\partial\rho}{\partial z_j}\frac{\partial}{\partial z_5}$ and $T=2i\frac{\partial}{\partial z_5}+2i\frac{\partial}{\partial \z_5}$ the Levi-form looks like
\[
\begin{pmatrix}
  2\Rre z_1 & -z_2 & 0 & 0 \\
  -\z_2 & -2\Rre z_1 & 0 & 0 \\
  0 & 0 & 1 & 0 \\
  0 & 0 & 0 & 1 \\
\end{pmatrix}.
\]
We can compute the eigenvalues of this matrix in increasing order as
\[
\left\{-\sqrt{4(\Rre z_1)^2+|z_2|^2},\sqrt{4(\Rre z_1)^2+|z_2|^2},1,1\right\}.
\]
Since the corresponding eigenvectors are discontinuous at $P=0$, the Levi-form can not be diagonalized in a neighborhood of $P=0$.  In fact, we can not even continuously separate the positive and negative eigenspaces.  Let $q=2$ and $m=0$.  The sum of the two smallest eigenvalues is zero, so this domain satisfies weak $Z(2)$, which is equivalent to weak $Y(2)$ when $n=5$.

The signature of the Levi-form may also change locally.  If we let $\rho(z)=\Imm z_5+|z_2|^2+|z_3|^2+|z_4|^2+\Rre ((z_1)^2\z_1)$ with $L_j$ and $T$ as before, then we have a diagonal Levi-form with eigenvalues $\left\{2\Rre (z_1),1,1,1\right\}$.
When $\Rre (z_1)>0$, we have four positive eigenvalues.  When $\Rre (z_1)<0$, we have three positive and one negative eigenvalues.  Note that since we always have at least three positive eigenvalues, this satisfies the standard definition of $Y(2)$.  From the standpoint of weak $Z(2)$, we can take $m=0$ and obtain $\mu_1+\mu_2=2\Rre (z_1)+1>0$ near $P$, or we can take $m=1$ and obtain $\mu_1+\mu_2-c_{11}=(2\Rre (z_1)+1)-2\Rre (z_1)=1>0$, so either value of $m$ may work.  Hence, the appropriate value of $m$ need not be constant on $M$.  However, since we disallow $m=q$,
the condition $m<q$ or $m>q$ must be global.

If we can choose $m<q$ independent of the local neighborhood $U$,
then weak $Z(q)$ agrees with $(q-1)$-pseudoconvexity (see \cite{Zam08} for the definition on boundaries of domains and further references, or \cite{AhBaZa06} for generic CR submanifolds).  If $M$ satisfies weak $Z(1)$ for a choice of $m=0$, then
$M$ is simply a weakly pseudoconvex CR-manifold of hypersurface type.

\begin{rem}\label{rem:m for Z(q) and Z(n-1-q) may be different} For a CR-manifold $M$ that satisfies weak $Y(q)$, the $m$ that corresponds to
$Z(q)$ has no relation to the $m$ that corresponds to $Z(n-1-q)$.
To emphasize this,  we may use $m_q$ for the integer-valued function on $M$ that corresponds to weak $Z(q)$ and similarly for $m_{n-1-q}$ for
weak $Z(n-1-q)$.
\end{rem}

%
%
\subsection{$q$-compatible functions}
Let $\I_q = \{J= (j_1,\dots,j_q)\in\N^q : 1 \leq j_1 < \cdots < j_q \leq n-1\}$.

Let $\lam$ be a function defined near $M$ and define the 2-form
\begin{equation}\label{eqn:Theta^lam def}
\Theta^\lam = \frac 12\Big( \p_b\dbarb\lam - \dbarb\p_b\lam\Big) + \frac 12 \nu(\lambda)d\gamma.
\end{equation}
where $\nu$ is the real normal to $M$.
We will sometimes consider $\Theta^\lam$ to be the matrix $\Theta^\lam = (\Theta^\lam_{jk})$.

\begin{defn}
\label{defn:compatible_functions}
  Let $M$ be a smooth, compact, oriented CR-manifold of hypersurface type of real dimension $2n-1$ satisfying $Z(q)$ weakly at some point $P\in M$.  Let $\lambda$ be a smooth function near $M$.  We say $\lambda$ is
 \bfem{$q$-compatible}
 with $M$ at $P$ if there exists a special boundary neighborhood $U\subset M$ containing $P$, an integer $m_q=m_q(U)$ from weak $Z(q)$,
 an orthonormal basis $L_1,\dots,L_{n-1}$ of $\Toz(U)$, and a constant $B_\lambda>0$ satisfying
  \begin{enumerate}\renewcommand{\labelenumi}{(\roman{enumi})}
    \item $\mu_1+\cdots+\mu_q-(c_{11}+\cdots+c_{mm})\geq 0$ on $U$, where $\mu_1, \dots, \mu_{n-1}$ are the eigenvalues of the Levi form in increasing order.
    \item $b_1+\cdots+b_q-(\Theta_{11}+\cdots+\Theta_{mm})\geq B_{\lambda}$ on $U$ if $m<q$, where $b_1,\dots,b_{n-1}$ are the eigenvalues of $\Theta$ in increasing order.
    \item $b_{n-q}+\cdots+b_{n-1}-(\Theta_{11}+\cdots+\Theta_{mm})\leq-B_{\lambda}$ on $U$ if $m>q$.
  \end{enumerate}
\end{defn}

We call $B_\lam$ the positivity constant of $\lambda$.  Observe that if $M$ is pseudoconvex, $M$ satisfies Definition \ref{defn:weak Z(q)} for any $1\leq q\leq n-1$ and any orthonormal basis $L_1,\ldots,L_{n-1}$ by selecting $m=0$.  Hence, plurisubharmonic functions will be $q$-compatible with pseudoconvex domains for any $1\leq q\leq n-1$.

\begin{rem}
\label{rem:Z(q) positive explanation}
If $\lam = |z|^2$ then Proposition \ref{prop:dbar vs. dbar_b on M} below proves that
$\Theta = \p\dbar$ when tested against complex tangent vectors of $M$. Tested against such vectors, $\Theta^{|z|^2} = I$.
Since this is diagonal and all of the eigenvalue of $I$ are 1,
$b_1+\cdots+b_q-(\Theta_{11}+\cdots+\Theta_{mm})=q-m\geq 1$ if $q>m$ and $b_{n-q}+\cdots+b_{n-1}-(\Theta_{11}+\cdots+\Theta_{mm})=q-m\leq -1$
if $q<m$.  Hence, $\lambda = |z|^2$ is always a $q$-compatible function on $M$ with positivity constant $1$.
\end{rem}

\begin{rem}
Without the requirement that $\{L_1,\ldots,L_{n-1}\}$ are orthonormal, $\lam=|z|^2$ may not be a $q$-compatible function for all values of $m\neq q$.  For a given choice of non-orthonormal local coordinates, we can always define a local function which is $q$-compatible for all allowable $q$ and $m$, but there is no guarantee that such local functions could be made global.  Hence, if we remove the restriction that the local coordinates in Definition \ref{defn:compatible_functions} are orthonormal, we must also assume the existence of a global function which is $q$-compatible for all allowable choices of $q$ and $m$.
\end{rem}

\begin{rem}
  We note that if for every $B_\lambda>0$ there exists a $q$-compatible function $\lambda$ satisfying $0\leq\lambda\leq 1$ with positivity constant $B_\lambda$, then the methods of \cite{Rai10c} can be incorporated into our current paper to show that the complex Green operator is compact.  Such a condition is analogous to Catlin's Property $(P)$ \cite{Cat84}.
\end{rem}

In this article, constants with no subscripts may depend on $n$, $N$, $M$ but not any relevant $q$-compatible function.
Those constants will be denoted with an appropriate subscript. The constant $A$ will be reserved for
the constant in the construction of the pseudodifferential operator in Section \ref{sec:local coordinates}.

%
%
\section{Computations in Local Coordinates}\label{sec:local coordinates}

\subsection{Local coordinates and CR-plurisubharmonicity}

The following result is proved in \cite{Rai10c}.
\begin{prop}\label{prop:dbar vs. dbar_b on M}
Let $M^{2n-1}$ be a  smooth, orientable
CR-manifold of hypersurface type embedded in $\C^N$ for some $N\geq n$. If $\lambda$ is a smooth function near $M$,
$L\in \Toz(M)$, and $\nu$ is the real part of the complex normal to $M$,
then on $M$
\[
\left\la\frac12\Big(\p\dbar\lam-\dbar\p\lam\Big), L\wedge\Lb\right\ra
- \left\la\frac12\Big(\p_b\dbarb\lam-\dbarb\p_b\lam\Big), L\wedge\Lb\right\ra
= \frac 12\nu\{\lam\} \la d\gamma, L\wedge \bar L\ra
\]
\end{prop}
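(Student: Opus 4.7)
The plan is to rewrite both pairings via Cartan's formula and isolate the single place where they disagree. First, since $\p^2=\dbar^2=0$, one has
\[
\tfrac12\bigl(\p\dbar\lam-\dbar\p\lam\bigr)=\tfrac12\,d(\dbar\lam-\p\lam),
\]
and similarly $\tfrac12(\p_b\dbarb\lam-\dbarb\p_b\lam)=\tfrac12\,d_b(\dbarb\lam-\p_b\lam)$. On $M$, $d_b$ of a tangential $(1,0)$- or $(0,1)$-form may carry an additional $\gamma$-component, but any such component vanishes after pairing with $L\wedge\Lb$ because $\gamma(L)=\gamma(\Lb)=0$. Hence both pairings can be evaluated by applying Cartan's formula $d\alpha(X,Y)=X\alpha(Y)-Y\alpha(X)-\alpha([X,Y])$ with $X=L$, $Y=\Lb$ to the respective $1$-forms.

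The ambient and tangential forms satisfy $(\dbar\lam-\p\lam)(L)=-L(\lam)=(\dbarb\lam-\p_b\lam)(L)$ and $(\dbar\lam-\p\lam)(\Lb)=\Lb(\lam)=(\dbarb\lam-\p_b\lam)(\Lb)$, so the derivative contributions $L\Lb(\lam)+\Lb L(\lam)$ from Cartan's formula cancel when the two sides of the proposition are subtracted. What remains is
\[
-\tfrac12\bigl[(\dbar\lam-\p\lam)-(\dbarb\lam-\p_b\lam)\bigr]\bigl([L,\Lb]\bigr).
\]
Since $L,\Lb$ are tangent to $M$, the bracket $[L,\Lb]$ is a complex tangent vector field and I would decompose it as $X+Y+cT$ with $X\in\Toz(M)$, $Y\in\Tzo(M)$, $c\in\C$. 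Applying Cartan's formula to $\gamma$ together with $\gamma(L)=\gamma(\Lb)=0$ and $\la\gamma,T\ra=-1$ identifies $c=\la d\gamma,L\wedge\Lb\ra$, the Levi-form entry. On the $X$ and $Y$ pieces the ambient and tangential forms take identical values (namely $-X(\lam)$ and $Y(\lam)$), while $\dbarb\lam$ and $\p_b\lam$ annihilate $T$ by construction, so the entire discrepancy collapses to $-\tfrac12\,c\,(\dbar\lam-\p\lam)(T)$.

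It then remains to show $(\dbar\lam-\p\lam)(T)=T^{0,1}(\lam)-T^{1,0}(\lam)=-\nu(\lam)$. Using that $T$ is purely imaginary, tangent to $M$, and transverse to $\Toz(M)\oplus\Tzo(M)$, together with the normalization $\la\gamma,T\ra=-1$ and the definition of $\nu$ as the real part of the complex normal, a direct frame computation gives $T=-iJ\nu$, where $J$ is the ambient complex structure on $\C^N$. Then
\[
T^{1,0}=\tfrac12(T-iJT)=\tfrac12(\nu-iJ\nu),\qquad T^{0,1}=-\tfrac12(\nu+iJ\nu),
\]
so $T^{0,1}(\lam)-T^{1,0}(\lam)=-\nu(\lam)$. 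Substituting gives $-\tfrac12\cdot c\cdot(-\nu(\lam))=\tfrac12\,\nu(\lam)\,\la d\gamma,L\wedge\Lb\ra$, the right-hand side of the proposition.

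The main obstacle will be making consistent the sign and normalization conventions entering the identification $T=-iJ\nu$: the orientation of $\gamma$, the sign in $\la\gamma,T\ra=-1$, and the choice of outward versus inward $\nu$ must all be matched. A secondary subtlety arises when $N>n$, where the real normal bundle has rank larger than one; in that case one must verify that only the component of $\nu$ dual to $T$ under the complex structure enters the identity, and that the remaining real-normal directions do not contribute.
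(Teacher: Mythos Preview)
The paper does not actually prove this proposition; it simply states that the result is proved in \cite{Rai10c}. So there is no in-paper argument to compare against.

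On its own merits, your outline is sound and would succeed. Rewriting $\tfrac12(\partial\bar\partial\lambda-\bar\partial\partial\lambda)$ as $\tfrac12\,d(\bar\partial\lambda-\partial\lambda)$ and applying Cartan's formula is the natural move, and your observation that the first-order terms $L\bar L(\lambda)+\bar L L(\lambda)$ cancel between the ambient and tangential expressions is correct. The reduction to the $T$-component of $[L,\bar L]$ and the identification $c=\langle d\gamma,L\wedge\bar L\rangle$ are exactly right. Your computation of $(\bar\partial\lambda-\partial\lambda)(T)=-\nu(\lambda)$ via $T^{1,0}$, $T^{0,1}$ is also correct once $T=-iJ\nu$ is established.

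The two caveats you flag are genuine but manageable. The sign in $T=-iJ\nu$ is indeed a convention issue: since $T$ is purely imaginary one has $T=iW$ for a real tangent field $W$, and $J$ carries $W$ into the real normal line (in the hypersurface case $N=n$) because $JW\perp H_p$ and $JW\notin\mathbb{R}W$; the sign is then pinned down by the normalization $\langle\gamma,T\rangle=-1$ together with the orientation chosen for $\nu$. For $N>n$ the normal bundle is larger, but the relevant ``complex normal'' is by definition the direction $J W$ modulo $H_p$, and the paper's $\nu$ is its real part; the other normal directions are $J$-invariant and orthogonal to $T$, so they do not contribute to $(\bar\partial\lambda-\partial\lambda)(T)$. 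You have correctly isolated where the work lies.
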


\subsection{Pseudodifferential Operators} We follow the setup for the microlocal analysis in \cite{Rai10c}.
Since $M$ is compact, there exists a finite cover $\{ U_\nu\}_{\nu}$ so each $U_{\nu}$ has a special boundary system and
can be parameterized by a hypersurface in $\C^n$ ($U_\nu$ may be shrunk as necessary). To set up the microlocal analysis,
we need to define the appropriate pseudodifferential operators on each $U_{\nu}$. Let
$\xi = (\xi_1,\dots,\xi_{2n-2},\xi_{2n-1}) = (\xi',\xi_{2n-1})$ be the coordinates in Fourier space so that
$\xi'$ is dual to the part of $T(M)$ in the maximal complex subspace (i.e., $\Toz(M)\oplus \Tzo(M)$) and
$\xi_{2n-1}$ is dual to the totally real part of $T(M)$, i.e.,the ``bad" direction $T$. Define
\begin{align*}
\Cp &= \{ \xi : \xi_{2n-1} \geq \frac 12 |\xi'| \text{ and } |\xi|\geq1\};\\
\Cm &= \{\xi : -\xi\in \Cp\};\\
\Co &= \{\xi : -\frac 34|\xi'| \leq \xi_{2n-1}\leq \frac 34 |\xi'|\} \cup \{\xi : |\xi|\leq 1\}.
\end{align*}
Note that $\Cp$ and $\Cm$ are disjoint, but both intersect $\Co$ nontrivially. Next, we define functions on
$\{|\xi| : |\xi|^2 =1\}$. Let
\begin{align*}
\psp(\xi) &= 1 \text{ when } \xi_{2n-1}\geq \frac 34|\xi'| \text{ and } \supp \psp \subset \{\xi : \xi_{2n-1}\geq \frac 12|\xi'|\}; \\
\psm(\xi) &= \psp(-\xi); \\
\pso(\xi) &\text{ satisfies } \pso(\xi)^2 = 1- \psp(\xi)^2 - \psm(\xi)^2.
\end{align*}
Extend $\psp$, $\psm$, and $\pso$ homogeneously outside of the unit ball, i.e., if $|\xi|\geq 1$, then
\[
\psp(\xi) = \psp(\xi/|\xi|),\ \psm(\xi) = \psm(\xi/|\xi|),\text{ and } \pso(\xi) = \pso(\xi/|\xi|).
\]
Also, extend $\psp$, $\psm$, and $\pso$ smoothly inside the unit ball so that $(\psp)^2+(\psm)^2 + (\pso)^2 =1$. Finally,
for a fixed constant $A>0$ to be chosen later, define for any $t>0$
\[
\pspl(\xi) = \psi(\xi/(t A)),\ \psml(\xi) = \psm(\xi/(t A)),\text{ and }\psol(\xi) = \pso(\xi/(t A)).
\]
Next, let $\Pspl$, $\Psml$, and $\Pso$ be the pseudodifferential operators of order zero with symbols
$\pspl$, $\psml$, and $\psol$, respectively. The equality $(\pspl)^2+(\psml)^2 + (\psol)^2 =1$ implies that
\[
\Pspla\Pspl + \Psola\Psol + \Psmla\Psml = Id.
\]
We will also have use for pseudodifferential operators  that ``dominate" a given pseudodifferential operator. Let
$\psi$ be cut-off function and $\tilde\psi$ be another cut-off function so that $\tilde\psi|_{\supp \psi} \equiv 1$. If $\Psi$ and $\tilde\Psi$ are pseudodifferential operators with symbols $\psi$ and $\tilde\psi$, respectively, then we say that
$\tilde\Psi$ dominates $\Psi$.

For each $U_\nu$,  we can define
$\Pspl$, $\Psml$, and $\Psol$ to act on functions or forms supported in $U_\nu$, so let $\Pspln$, $\Psmln$, and $\Psoln$
be the pseudodifferential operators of order zero defined on $U_\nu$, and let $\Cpn$, $\Cmn$, and $\Con$ be the regions of
$\xi$-space dual to $U_\nu$ on which the symbol of each of those pseudodifferential operators is supported. Then it follows that:
\[
\Psplan\Pspln + \Psolan\Psoln + \Psmlan\Psmln = Id.
\]
Let $\tPsplm$ and $\tPsmlm$ be pseudodifferential operators that dominate $\Psplm$ and $\Psmlm$, respectively
(where $\Psplm$ and $\Psmlm$ are defined on some $U_\mu$). If $\tCpm$ and $\tCmm$ are the supports of
$\tPsplm$ and $\tPsmlm$, respectively, then we can choose $\{U_\mu\}$, $\tpsplm$, and $\tpsmlm$ so that the following result holds.

\begin{lem}\label{lem: neighborhood intersection lemma}
Let $M$ be a compact, orientable, embedded CR-manifold. There is a finite open covering $\{U_\mu\}_\mu$ of $M$ so that
if $U_\mu, U_\nu\in \{U_\mu\}$ have nonempty intersection, then
there exists a diffeomorphism $\vartheta$ between $U_\nu$ and $U_\mu$ with Jacobian
$\Jt$ so that:
\begin{enumerate}\renewcommand{\labelenumi}{(\roman{enumi})}
\item $\tJt(\tCpm)\cap \Cmn = \emptyset$ and $\Cpn \cap \tJt(\tCmm) = \emptyset$ where $\tJt$ is the inverse of the transpose
of $\Jt$;

\item Let ${}^\vartheta\!\Psplm$, ${}^\vartheta\!\Psmlm$, and ${}^\vartheta\!\Psolm$ be the transfers of
$\Psplm$, $\Psmlm$, and $\Psolm$, respectively via $\vartheta$. Then on
$\{\xi : \xi_{2n-1} \geq \frac 45 |\xi'| \text{ and } |\xi|\geq (1+\ep)tA\}$, the principal symbol
of ${}^\vartheta\!\Psplm$ is identically 1, on
$\{\xi : \xi_{2n-1} \leq -\frac 45 |\xi'| \text{ and } |\xi|\geq (1+\ep)tA\}$, the principal symbol
of ${}^\vartheta\!\Psmlm$ is identically 1, and on
$\{\xi : -\frac 13 \xi_{2n-1} \geq \frac 13 |\xi'| \text{ and } |\xi|\geq (1+\ep)tA\}$, the principal symbol
of ${}^\vartheta\!\Psolm$ is identically 1, where $\ep>0$ can be very small;

\item Let ${}^\vartheta\!\tPsplm$, ${}^\vartheta\!\tPsmlm$ be the transfers via $\vartheta$ of
$\tPsplm$ and $\tPsmlm$, respectively. Then the principal symbol of ${}^\vartheta\!\tPsplm$ is identically 1 on
$\Cpn$ and the principal symbol of ${}^\vartheta\!\tPsmlm$ is identically 1 on $\Cmn$;

\item $\tCpm\cap\tCmm = \emptyset$.
\end{enumerate}
\end{lem}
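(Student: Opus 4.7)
The key observation is that the totally imaginary direction $T$ is intrinsic to $M$ up to sign, since it is determined as the complement of $\Toz(M)\oplus\Tzo(M)$ in $\C TM$; orientability fixes the sign globally. Any coordinate patch coming from a special boundary system straightens $T$ to the $(2n-1)$-th coordinate direction at its base point. Consequently, when we transfer pseudodifferential operators between two overlapping special boundary charts by the chart transition $\vartheta = \phi_\mu\circ\phi_\nu^{-1}$, the derivative $d\vartheta$ exactly preserves the $T$-direction at the base point and, by continuity, approximately preserves it on a small enough neighborhood. The entire lemma is then a compactness-plus-perturbation argument built around this preservation.

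First I would cover $M$ by special boundary neighborhoods $\{U_P\}_{P\in M}$, choosing on each an orthonormal frame $L_1,\dots,L_{n-1}$ of $\Toz(U_P)$ and the normalized $T$, together with coordinates sending $T|_P$ to $\partial/\partial x_{2n-1}$. Compactness yields a finite subcover $\{U_\mu\}$. I would then shrink each $U_\mu$ uniformly so that, on every nonempty overlap $U_\mu\cap U_\nu$, the matrix $\tJt = J_\vartheta^{-T}$ has its $(2n-1)$-th row within an arbitrarily prescribed $\delta>0$ of $(0,\dots,0,1)$ (after the orientation-preserving sign choice). The gain here is uniform: because $M$ is compact and only finitely many overlaps occur, a single $\delta$ controls them all. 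This is what turns the pointwise $T$-preservation into an operator-level statement.

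With $\delta$ fixed (to be chosen later), I would then construct the cutoffs: pick $\psp$ equal to $1$ on $\{\xi_{2n-1}\geq \tfrac34|\xi'|\}$ and supported in $\{\xi_{2n-1}\geq\tfrac12|\xi'|\}$, and dominating cutoffs $\tpsplm,\tpsmlm$ supported in slightly wider cones (with angle parameters strictly between, say, $\tfrac25$ and $\tfrac12$) and identically $1$ on the supports of $\psplm,\psmlm$. The principal symbol of the transferred operator ${}^\vartheta\!\Psplm$ at a covector $\eta$ in $U_\nu$'s frequency space equals $\psplm(\tJt\eta)$, so each claim reduces to a containment between cones: for (iv) the supports of $\tpsplm$ and $\tpsmlm$ sit in disjoint open half-spaces $\{\xi_{2n-1}>0\}$ and $\{\xi_{2n-1}<0\}$ by construction; for (i), $\tJt(\tCpm)$ stays in $\{\xi_{2n-1}>0\}$ once $\delta$ is small enough, hence avoids $\Cmn$; for (iii), the wider cone of $\tpsplm$ contains $\tJt(\Cpn)$; and for (ii), one checks that $\tJt$ carries $\{\xi_{2n-1}\geq\tfrac45|\xi'|\}$ into $\{\xi_{2n-1}\geq\tfrac34|\xi'|\}$ (and analogously for the $\Psmlm$ and $\Psolm$ cones), which follows from the gap between $\tfrac45$ and $\tfrac34$ provided $\delta$ is chosen small relative to that gap.

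The main obstacle is the bookkeeping: the cone constants in (ii), the widened cone angles used to define $\tpsplm,\tpsmlm$ in (iii), and the size of the Jacobian perturbation $\delta$ must all be chosen compatibly. The correct order is to first fix the cone angles required by (ii), then choose the widened angles used for $\tpsplm,\tpsmlm$ so that the half-space separation needed in (i) and (iv) still leaves slack under a $\delta$-perturbation of $\tJt$, and only then refine the cover so that $\delta$ is small enough to satisfy all these quantitative inequalities simultaneously on every overlap. The use of finiteness of the cover is essential here, since a non-uniform $\delta$ would leave some overlaps uncontrolled.
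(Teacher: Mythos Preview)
The paper does not actually prove this lemma: immediately after the statement it writes ``The proof of this lemma is contained in Lemma 4.3 and its subsequent discussion in \cite{Nic06}.'' Your sketch is a faithful outline of precisely that argument from Nicoara---the global $T$-direction fixed by orientability, special boundary charts aligning $T$ with $\partial/\partial x_{2n-1}$, shrinking the cover so that $\tJt$ is a small perturbation of a matrix preserving the last coordinate, and then verifying the cone containments with compatible angle parameters---so there is nothing to correct and no genuine divergence from the paper's (outsourced) approach.
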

We will suppress the left superscript $\vartheta$ as it should be clear from the context which pseudodifferential operator
must be transferred. The proof of this lemma is contained in Lemma 4.3 and its subsequent discussion in \cite{Nic06}.

If $P$ is any of the operators  $\Psplm$, $\Psmlm$, or $\Psolm$, then it is immediate that
\begin{equation}\label{eqn:inverse order zero t}
D^\alpha_\xi \sigma(P) = \frac{1}{|t|^\alpha}q_\alpha(x,\xi)
\end{equation}
for $|\alpha|\geq 0$, where $q(x,\xi)$ is bounded independently of $t$.

%
%
\subsection{Norms}
We have a volume form $dV$ on $M$, and we define the following inner products and norms on functions (with their natural
generalizations to forms). Let $\lam$ be a smooth function defined near $M$. We define
\[
(\phi,\vp)_\lam = \int_M \phi \bar\vp\, e^{-\lam}\, dV, \text{ and } \|\vp \|_\lam^2 = (\vp,\vp)_\lam
\]
In particular,
$(\phi,\vp)_0 = \int_M \phi \bar\vp\, dV$ and $\|\vp \|_0^2 = (\vp,\vp)_0$ are the standard (unweighted) $L^2$ inner product and norm.
If $\vp = \sum_{J\in\I_q} \vp_J \ob_J$, then we use the common shorthand $\|\vp \| = \sum_{J\in\I_q} \|\vp_J \|$ where $\| \cdot \|$ represents
any norm of $\vp$.

We also need a norm that is well-suited for the microlocal arguments. Let $\lp$ and $\lm$ be smooth functions defined near $M$.
Let $\{\zn\}$ be a partition of unity
subordinate to the covering $\{U_\nu\}$ satisfying $\sum_{\nu} \zn^2=1$. Also, for each $\nu$, let $\tzn$ be a cutoff function
that dominates $\zn$ so that $\supp\tzn \subset U_\nu$. Then we define the global inner product and norm as follows:
\begin{multline*}
\langle \phi,\vp \ralplm = \la \phi, \vp \rapm
= \sumn \Big[ ( \tzn\Pspln\zn \phi^\nu, \tzn\Pspln\zn \vp^\nu )_{\lp} \\
+ (\tzn \Psoln \zn \phi^\nu ,\tzn \Psoln \zn \vp^\nu )_0
+ (\tzn \Psmln \zn \phi^\nu, \tzn \Psmln \zn \vpn )_{\lm} \Big]
\end{multline*}
and
\[
\norm \vp \normlplm^2 = \norm \vp \normpm^2
= \sumn \Big[ \| \tzn\Pspln\zn \vp^\nu \|_{\lp}^2 + \|\tzn \Psoln \zn \vp^\nu \|_0^2
+ \|\tzn \Psmln \zn \vpn \|_{\lm}^2 \Big],
\]
where $\vpn$ is the form $\vp$ expressed in the local coordinates on $U_\nu$. The superscript $\nu$ will often be omitted.

For a form $\vp$ supported on $M$, the Sobolev norm of order $s$ is given by the following:
\[
\|\vp\|_s^2 = \sumn \|\tzn\Lambda^s\zn \vp^\nu \|_{0}^2
\]
where $\Lambda$ is defined to be the pseudodifferential operator with symbol $(1+|\xi|^2)^{1/2}$.

In \cite{Rai10c}, it is shown that  there exist constants $c_\pm$ and $C_\pm$ so that
\begin{equation}\label{eqn:norm equivalence}
c_\pm \| \vp\|_0^2 \leq \norm \vp \normlplm^2 \leq C_\pm \|\vp\|_0^2
\end{equation}
where $c_\pm$ and $C_\pm$ depend on $\max_{M}\{ |\lp| + |\lm|\}$ (assuming $tA\geq1$).
Additionally, there exists an invertible self-adjoint operator
$H_\pm$ so that $(\phi,\vp)_0 = \la \phi, H_\pm \vp \rapm$.

%
%
\subsection{$\dbarb$ and its adjoints}

If $f$ is a function on $M$, in local coordinates,
\[
\dbarb f = \sum_{j=1}^{n-1} \Lb_j f \ob_j,
\]
while if $\vp$ is a $(0,q)$-form, there exist functions $m_K^J$ so that
\[
\dbarb\vp = \sum_{\atopp{J\in\I_q}{K\in\I_{q+1}}} \sum_{j=1}^{n-1} \ep^{jJ}_K \Lb_j \vp_J\ob_K
+ \sum_{\atopp{J\in\I_q}{K\in\I_{q+1}}} \vp_J m^J_K \ob_K
\]
where $\ep^{jJ}_K$ is $0$ if $\{j\}\cup J \neq K$ as sets and is the sign of the permutation that reorders $jJ$ as $K$. We also define
\[
\vp_{jI} = \sum_{J\in\I_q} \ep^{jI}_J \vp_J
\]
(in this case, $|I|=q-1$ and $|J|=q$).
Let $\Lba_j$ be the adjoint of $\Lb_j$ in $ (\cdot\, , \cdot)_0$, $\Lbal_j$ be the adjoint of $\Lb_j$ in
$(\cdot\, ,\cdot)_\lam$. We define
$\dbarbs$ and $\dbarbsl$  in $L^2(M)$ and $L^2(M,e^{-\lam})$, respectively. In this paper, $\lambda$ stands for $\lp$ or $\lm$ and we will abbreviate
$\dbarb^{*,\lp}$ by $\dbarbsp$ and similarly for $\dbarbsm$, $\Lbap$, $\Lbam$, etc.

On a $(0,q)$-form $\vp$, we have (for some functions $f_j\in C^\infty(U)$)
\begin{align*}
\dbarbs \vp &= \sum_{I\in\I_{q-1}}\sum_{j=1}^{n-1}  \Lba_j \vp_{jI} \ob_I
+ \sum_{\atopp{I\in\I_{q-1}}{J\in\I_q}} \overline{m^I_J} \vp_J  \ob_I \\
&= -\sum_{I\in\I_{q-1}}\sum_{j=1}^{n-1}\big( L_j \vp_{jI} + f_j \vp_{jI}\big)\omb_I
+ \sum_{\atopp{I\in\I_{q-1}}{J\in\I_q}} \overline{m^I_J} \vp_J  \ob_I
\end{align*}
\begin{align} \label{eqn:dbarb adjoints}
\dbarbsl \vp &= \sum_{I\in\I_{q-1}}\sum_{j=1}^{n-1} \Lbal_j \vp_{jI} \ob_I
+ \sum_{I\in\I_{q-1}} \overline{m^I_J} \vp_J  \ob_I \\
&= -\sum_{I\in\I_{q-1}}\sum_{j=1}^{n-1}\big( L_j \vp_{jI} - L_j\lam \vp_{jI} + f_j \vp_{jI}\big)\omb_I
+ \sum_{\atopp{I\in\I_{q-1}}{J\in\I_q}} \overline{m^I_J} \vp_J  \ob_I \nn
\end{align}
Consequently, we see that
\[
\dbarbsl = \dbarbs - [\dbarbs,\lam],
\]
and both adjoints have the same domain. Finally, let $\dbarbspm$ be the adjoint of $\dbarb$ with respect to
$\la \cdot\, , \cdot \rapm$.

The computations proving Lemma 4.8 and Lemma 4.9 and equation (4.4) in \cite{Nic06}
can be applied here with only
a change of notation, so we have the following two
results, recorded here as Lemma \ref{lem: dbarbspm computation} and Lemma \ref{lem:energy form estimate for dbarbspm}.
The meaning of the results is that $\dbarbspm$ acts like $\dbarbsp$ for forms whose support is basically $\Cp$
and $\dbarbsm$ on forms whose support is basically $\Cm$.
\begin{lem}\label{lem: dbarbspm computation}
On smooth $(0,q)$-forms,
\begin{multline*}
\dbarbspm = \dbarbs - \summ \zm^2 \tPsplm[\dbarbs,\lp] + \summ\zm^2 \tPsmlm [\dbarbs,\lm] \\
+ \summ \Big( \tzm [\tzm\Psplm\zm,\dbarb]^*\tzm \Psplm\zm + \zm\Psplam\tzm[\dbarbsp,\tzm\Psplm\zm]\tzm \\
+ \tzm [\tzm\Psmlm\zm,\dbarb]^*\tzm \Psmlm\zm + \zm\Psplam\tzm[\dbarbsm,\tzm\Psmlm\zm]\tzm + E_A \Big),
\end{multline*}
where the error term $E_A$ is a sum of order zero terms and ``lower order" terms. Also, the symbol of $E_A$ is supported in
$\Com$ for each $\mu$.
\end{lem}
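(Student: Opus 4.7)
The plan is to compute $\dbarbspm$ by expanding $\la \dbarb\phi, \vp\rapm$ into its three microlocal pieces and pushing $\dbarb$ to the other side of each piece via a pair of commutator manipulations. First I would expand $\la \dbarb \phi, \vp \rapm$ as a sum over $\nu$ and over the $(+,0,-)$ cones, and in each piece commute $\dbarb$ past the sandwich $\tzn \Pspln \zn$ (or the $0$-, $-$-analogue) using $\tzn \Pspln \zn \dbarb = \dbarb(\tzn \Pspln \zn) + [\tzn \Pspln \zn, \dbarb]$. Taking the adjoint in the appropriate weighted inner product converts the $+$ piece into
\[
(\tzn \Pspln \zn \phi, \dbarbsp \tzn \Pspln \zn \vp)_{\lp} + ([\tzn \Pspln \zn, \dbarb]\phi, \tzn \Pspln \zn \vp)_{\lp},
\]
with analogous expressions for the $0$ and $-$ pieces using $\dbarbs$ and $\dbarbsm$, respectively; each commutator is of order zero since $\dbarb$ is first order and the sandwich operators are order zero.

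Next I would commute the weighted adjoints back through the sandwich via $\dbarbsp \tzn \Pspln \zn = \tzn \Pspln \zn \dbarbsp + [\dbarbsp, \tzn \Pspln \zn]$, producing a cleanly sandwiched term $(\tzn \Pspln \zn \phi, \tzn \Pspln \zn \dbarbsp \vp)_{\lp}$ plus a second order-zero commutator remainder. Substituting $\dbarbsp = \dbarbs - [\dbarbs, \lp]$ and $\dbarbsm = \dbarbs - [\dbarbs, \lm]$ splits each cleanly sandwiched term into an unweighted $\dbarbs$ contribution and a weight-commutator contribution. Summing the three $\dbarbs$-contributions over $\nu$ and using $\sumn \zn^2 = 1$ together with $(\pspl)^2 + (\psol)^2 + (\psml)^2 = 1$ reconstructs $\la \phi, \dbarbs \vp \rapm$ modulo an order-zero error microlocally supported in $\Com$. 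The surviving weight-commutator sums $\sumn (\tzn \Pspln \zn)^* (\tzn \Pspln \zn) [\dbarbs, \lp]$ and its $-$ analogue are microlocally supported in the $+$ and $-$ cones; by parts (ii)--(iv) of Lemma \ref{lem: neighborhood intersection lemma} the transferred symbol of $\tPsplm$ (resp.\ $\tPsmlm$) equals $1$ on $\Cpn$ (resp.\ $\Cmn$), which yields the $-\summ \zm^2 \tPsplm [\dbarbs, \lp]$ and $+\summ \zm^2 \tPsmlm [\dbarbs, \lm]$ terms modulo order-zero errors supported in $\Com$. Finally, the two commutator remainders from the first two steps, transferred to the $\mu$-cover via the same lemma, take exactly the explicit $\tzm[\tzm\Psplm\zm,\dbarb]^*\tzm\Psplm\zm$ and $\zm\Psplam\tzm[\dbarbsp,\tzm\Psplm\zm]\tzm$ form (and their $-$ analogues) appearing in the statement.

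The main obstacle will be the bookkeeping for the microlocal support of the leftover terms. Each commutator $[\tzn \Pspln \zn, \dbarb]$ and $[\dbarbsp, \tzn \Pspln \zn]$ drops one order, and its principal symbol vanishes off the region where the symbol of $\Pspln$ is non-constant; by the construction of $\pspl,\psol,\psml$ this region lies in $\Con$, which transfers to $\Com$ on the $\mu$-cover. Verifying that every remaining term from the reassembly and from the transfer to the $\mu$-cover has this microlocal support (so it fits into $E_A$), and that the explicit $\mu$-cover commutators stated in the lemma genuinely match the transferred $\nu$-commutators produced above, is the core bookkeeping task; the computations themselves are direct adaptations of Lemmas 4.8, 4.9 and equation (4.4) of \cite{Nic06} as indicated in the paragraph preceding the statement.
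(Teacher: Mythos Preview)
Your proposal is correct and matches what the paper does: the paper gives no independent proof of this lemma but simply notes that the computations proving Lemma~4.8, Lemma~4.9, and equation~(4.4) in \cite{Nic06} carry over with only a change of notation, and your outline is precisely a sketch of that Nicoara computation (expand $\la\dbarb\phi,\vp\rapm$ into its three microlocal pieces, commute $\dbarb$ past the sandwich, take weighted adjoints, commute back, substitute $\dbar_b^{*,\lambda}=\dbarbs-[\dbarbs,\lambda]$, and reassemble using the dominating operators from Lemma~\ref{lem: neighborhood intersection lemma}). The only caveat is your remark that the principal symbols of the $+$/$-$ commutators are supported in $\Con$: while true, those commutators are retained explicitly in the statement rather than absorbed into $E_A$, so that observation is not what places terms into $E_A$; the genuine $E_A$ contributions are the $0$-cone commutators and the lower-order discrepancies arising in the reassembly and in the passage to the dominating $\tPsplm$, $\tPsmlm$, exactly as you indicate in your final paragraph.
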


We are now ready to define the energy forms that we use. Let
\begin{align*}
\Qbpm(\phi,\vp) &= \la\dbarb\phi,\dbarb\vp\rapm + \la\dbarbspm\phi,\dbarbspm\vp\rapm \\
\Qbp(\phi,\vp)&= (\dbarb\phi,\dbarb\vp)_\lp + (\dbarbsp\phi,\dbarbsp\vp)_\lp \\
\Qbo(\phi,\vp) &= (\dbarb\phi,\dbarb\vp)_0 + (\dbarbs\phi,\dbarbs\vp)_0 \\
\Qbm(\phi,\vp) &= (\dbarb\phi,\dbarb\vp)_\lm + (\dbarbsm\phi,\dbarbsm\vp)_\lm.
\end{align*}

\begin{lem}\label{lem:energy form estimate for dbarbspm}
If $\vp$ is a smooth $(0,q)$-form on $M$, then there exist constants $K, K_\pm$ and  $K'$ with $K\geq 1$ so that
\begin{multline}\label{eqn:energy form -- dbarb commuted by psi-do}
K\Qbpm(\vp,\vp) + K_t \sumn \|\tzn\tPsoln\zn\vpn\|_0^2 +  K'\|\vp\|_0^2 + O_t (\|\vp\|_{-1}^2)
\geq \sumn \Big[ \Qbp(\tzn\Pspln\zn\vpn,\tzn\Pspln\zn\vpn) \\
+ \Qbo(\tzn\Psoln\zn\vpn,\tzn\Psoln\zn\vpn) + \Qbm(\tzn\Psmln\zn\vpn,\tzn\Psmln\zn\vpn) \Big]
\end{multline}
$K$ and $K'$ do not depend on $t, \lm$ or $\lp$.
\end{lem}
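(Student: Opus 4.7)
The plan is to expand the right-hand side term by term, commute the microlocal cutoffs $\tzn\Pspln\zn$, $\tzn\Psoln\zn$, $\tzn\Psmln\zn$ through $\dbarb$ and $\dbarbspm$, and absorb each resulting error into one of the three non-leading buckets on the left: smoothing errors into $O_t(\|\vp\|_{-1}^2)$, order-zero errors with symbol away from $\Co$ into $K'\|\vp\|_0^2$, and errors with symbol supported in $\Com$ into $K_t \sumn \|\tzn\tPsoln\zn\vpn\|_0^2$.

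For the $\dbarb$-part of each summand on the right, I would write
\[
\dbarb(\tzn\Pspln\zn\vpn) = \tzn\Pspln\zn\,\dbarb\vpn + [\dbarb,\tzn\Pspln\zn]\vpn,
\]
and note that since $\Pspln$ has scalar principal symbol and $\dbarb$ is first order with scalar principal symbol, the commutator is of order zero. Applying $(a+b)^2 \leq 2a^2 + 2b^2$ and summing over $\nu$ and over the three microlocal pieces (with weights $\lp$, $0$, $\lm$), the leading terms reassemble to give $2\la \dbarb\vp,\dbarb\vp\rapm$, while each squared commutator is bounded by a constant times $\|\vp\|_0^2$ plus a smoothing piece contributing to $O_t(\|\vp\|_{-1}^2)$.

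The adjoint part requires more care. The key observation is that $\dbarbspm$ acts like $\dbarbsp$ on forms microlocalized to $\Cp$, like $\dbarbsm$ on forms microlocalized to $\Cm$, and like $\dbarbs$ on forms microlocalized to $\Co$. To make this precise, I would apply the identity from Lemma \ref{lem: dbarbspm computation} to the microlocalized form $\tzn\Pspln\zn\vpn$. In the resulting expression, the composition $\sum_\mu \zm^2\tPsplm \cdot \tzn\Pspln\zn$ equals $\tzn\Pspln\zn$ modulo smoothing, since by Lemma \ref{lem: neighborhood intersection lemma}(iii) the principal symbol of $\tPsplm$ (after coordinate transfer) equals $1$ on $\Cpn$; meanwhile $\sum_\mu \zm^2\tPsmlm \cdot \tzn\Pspln\zn$ is smoothing by Lemma \ref{lem: neighborhood intersection lemma}(iv), which gives $\tCpm\cap\tCmm=\emptyset$. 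Combining, one obtains
\[
\dbarbspm\bigl(\tzn\Pspln\zn\vpn\bigr) = \dbarbsp\bigl(\tzn\Pspln\zn\vpn\bigr) + R_\nu\,\vpn + S_\nu\,\vpn,
\]
where $R_\nu$ is order zero with symbol supported in $\Com$ (from $E_A$) and $S_\nu$ is smoothing. Solving for $\|\dbarbsp(\tzn\Pspln\zn\vpn)\|_\lp^2$ with $(a+b)^2\leq 2a^2+2b^2$ and summing over $\nu$ yields $2\la \dbarbspm\vp,\dbarbspm\vp\rapm$ as the leading contribution, the $R_\nu$-terms absorbed into $K_t \sumn \|\tzn\tPsoln\zn\vpn\|_0^2$ (using that $\tPsoln$ dominates any order-zero operator with symbol supported in $\Con$), and the $S_\nu$-terms absorbed into $O_t(\|\vp\|_{-1}^2)$. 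The $\Psoln$ and $\Psmln$ cases are handled symmetrically, with $\dbarbs$ and $\dbarbsm$ replacing $\dbarbsp$.

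The main obstacle is the careful bookkeeping: every commutator and every piece of the error term $E_A$ in Lemma \ref{lem: dbarbspm computation} must be verified to land in exactly one of the three buckets. Specifically, cross-support compositions of pseudodifferential operators (outer support disjoint from inner support after coordinate transfer) are smoothing, diagonal compositions equal the inner cutoff modulo smoothing, and commutators of scalar cutoffs with $\dbarb$ or $\dbarbs$ are order zero. The factor $K\geq 1$ (equal to $2$ above) arises from the quadratic inequality; $K$ and $K'$ are independent of $t, \lp, \lm$ because the scalar principal symbols of the cutoffs commute with $\dbarb$ and $\dbarbs$ to leading order, while $K_t$ inherits its $t$-dependence from the $tA$-rescaling built into $\psol$.
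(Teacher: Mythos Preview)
Your proposal is essentially correct and follows the same strategy the paper invokes: the paper does not give its own argument here but defers to Lemma~4.8, Lemma~4.9, and equation~(4.4) of \cite{Nic06}, and your sketch is precisely the Nicoara-style commutation argument carried out in the present notation.

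One small point of bookkeeping deserves tightening. After you write $\dbarbsp(\tzn\Pspln\zn\vpn)=\dbarbspm(\tzn\Pspln\zn\vpn)-R_\nu\vpn-S_\nu\vpn$ and pass to $\|\dbarbspm(\tzn\Pspln\zn\vpn)\|_{\lp}^2$, the sum over $\nu$ of these quantities is \emph{not} yet $\norm\dbarbspm\vp\normpm^2$; the latter is $\sum_\nu\|\tzn\Pspln\zn(\dbarbspm\vpn)\|_{\lp}^2+\cdots$, with the cutoffs on the outside. So one further commutation $[\dbarbspm,\tzn\Pspln\zn]$ is needed before the leading terms reassemble. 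This commutator is handled by exactly the same mechanism you already used (Lemma~\ref{lem: dbarbspm computation} shows $\dbarbspm=\dbarbs+(\text{order zero})$, so $[\dbarbspm,\tzn\Pspln\zn]$ is order zero with the same support dichotomy), but you should say so explicitly. With that step inserted, your outline matches the cited argument.
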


Also, since $\dbarbsl = \dbarbs +$ ``lower order" and $\Psi^\lam_{\mu,t}$ satisfies \eqref{eqn:inverse order zero t},
commuting $\dbarbsl$ by  $\Psi^\lam_{\mu,t}$ creates error
terms of order 0 that do not depend on $t$ or $\lam$, although lower order terms that may depend on $t$ and $\lam$.

%
%
\section{The Basic Estimate}\label{sec:basic estimate}
The goal of this section is to prove a basic estimate for smooth forms on $M$.
\begin{prop}\label{prop:basic estimate}
Let $M\subset \C^N$ be a compact, orientable CR-manifold of hypersurface type of dimension $2n-1$
and $1\leq q\leq n-2$.
Assume that $M$
admits  functions $\lam_1$ and $\lam_2$ where
$\lam_1$ is a $q$-compatible function and $\lam_2$ is an $(n-1-q)$-compatible function with positivity constants $B_{\lp}$ and $B_{\lm}$, respectively.
Let $\vp\in\Dom(\dbarb)\cap\Dom(\dbarbs)$.
Set
\[
\lp= \begin{cases} t\lam_1 & \textrm{if }m_q<q \\ -t\lam_1 & \textrm{if }m_q>q \end{cases}
\]
and
\[
\lm= \begin{cases} -t\lam_2 & \textrm{if }m_{n-1-q}<n-1-q \\ t\lam_2 & \textrm{if }m_{n-1-q}>n-1-q \end{cases}.
\]
There exist constants $K$,  $K_\pm$, and $K_\pm'$ where $K$  does
not depend on  $\lp$ and $\lm$ so that
\[
t B_\pm \norm \vp\normpm^2 \leq K \Qbpm(\vp,\vp) + K \norm \vp\normpm^2 +
K_\pm\sumn\sum_{J\in\I_q}\|\tzn\tPsoln\zn\vpn_J\|_0^2 + K_\pm' \|\vp\|_{-1}^2.
\]
The constant $B_\pm= \min\{ B_{\lp}, B_{\lm} \}$.
\end{prop}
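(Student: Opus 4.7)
The plan is to combine the microlocal decomposition of $\Qbpm$ supplied by Lemma~\ref{lem:energy form estimate for dbarbspm} with the weighted Kohn--Morrey--H\"ormander-type identity on each microlocal piece, and then to extract the positivity built into the $q$-compatibility of $\lam_1$ and the $(n-1-q)$-compatibility of $\lam_2$. Write $\vp^+_\nu = \tzn\Pspln\zn\vpn$, $\vp^-_\nu = \tzn\Psmln\zn\vpn$, and $\vp^0_\nu = \tzn\Psoln\zn\vpn$.

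First I would apply Lemma~\ref{lem:energy form estimate for dbarbspm}, which bounds $K\Qbpm(\vp,\vp)$ from below, modulo error terms already of the allowed shape $K_\pm\sum_\nu\|\tzn\tPsoln\zn\vpn\|_0^2 + O_t(\|\vp\|_{-1}^2) + K\norm\vp\normpm^2$, by $\sum_\nu[\Qbp(\vp^+_\nu,\vp^+_\nu) + \Qbo(\vp^0_\nu,\vp^0_\nu) + \Qbm(\vp^-_\nu,\vp^-_\nu)]$. The $\Qbo$-term is non-negative and may be dropped. It therefore suffices to establish, for every $\nu$, microlocal lower bounds $\Qbp(\vp^+_\nu,\vp^+_\nu) \gtrsim tB_{\lp}\|\vp^+_\nu\|_{\lp}^2$ and $\Qbm(\vp^-_\nu,\vp^-_\nu) \gtrsim tB_{\lm}\|\vp^-_\nu\|_{\lm}^2$, modulo absorbable errors; summing on $\nu$ and invoking the definition of $\norm\cdot\normpm$ with $B_\pm = \min\{B_{\lp},B_{\lm}\}$ then yields the claim.

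For the $\Cp$ piece I would expand $\Qbp(\vp^+_\nu,\vp^+_\nu)$ using the weighted Kohn--Morrey identity for $(0,q)$-forms. Integration by parts against $e^{-\lp}$ writes it as
\[
\sum_{J,j}\|\Lb_j\vp^+_{\nu,J}\|_{\lp}^2 + \sum_{I,j,k}\Rre(c_{jk}T\vp^+_{\nu,jI},\vp^+_{\nu,kI})_{\lp} + \sum_{I,j,k}(\Theta^{\lp}_{jk}\vp^+_{\nu,jI},\vp^+_{\nu,kI})_{\lp} + E,
\]
where Proposition~\ref{prop:dbar vs. dbar_b on M} identifies the matrix $\Theta^{\lp}$ appearing in the weight term and $E$ collects order-zero commutator errors. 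Because $\sigma(\Pspln)$ is supported in $\Cpn$, $iT$ is microlocally a non-negative operator of order one on $\vp^+_\nu$; one more integration by parts converts the $c_{jk}T$-sum into a combinatorial sum over $J\in\I_q$ of partial Levi-eigenvalue sums $\mu_{j_1}+\cdots+\mu_{j_q}$. Simultaneously diagonalizing $(c_{jk})$ and $(\Theta^{\lp}_{jk})$ by a unitary rotation of the orthonormal frame $L_1,\dots,L_{n-1}$ and then taking the minimum over $J\in\I_q$, condition (ii) of Definition~\ref{defn:compatible_functions} applies when $m_q<q$ (so $\lp=t\lam_1$) and condition (iii) applies when $m_q>q$ (so $\lp=-t\lam_1$); either way, the combined eigenvalue sum is bounded below by $tB_{\lp}$. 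The argument on $\Cm$ is symmetric: there $iT$ is microlocally negative, so complex conjugation trades the $(0,q)$-problem for a $(0,n-1-q)$-problem, and the $(n-1-q)$-compatibility of $\lam_2$ supplies the matching bound with $tB_{\lm}$.

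The main obstacle is the bookkeeping in this central step: the manipulations that convert $c_{jk}T$ into the diagonalized Levi matrix, carried out simultaneously with commuting $\dbarb$ and $\dbarbsp$ past the cutoffs $\tzn\Pspln\zn$, produce many error terms. By \eqref{eqn:inverse order zero t} each such commutator is an order-zero pseudodifferential operator whose symbol bound is independent of $t$, so its contribution is at worst $O(\norm\vp\normpm^2)$ with $t$-independent constant and is absorbed into the $K\norm\vp\normpm^2$ term on the right, while genuinely smoother remainders are absorbed into $K'_\pm\|\vp\|_{-1}^2$. This $t$-independence of the commutator constants is exactly what makes the $tB_\pm$ scaling on the left-hand side legitimate once $t$ is taken large.
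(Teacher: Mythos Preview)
Your overall architecture---reduce via Lemma~\ref{lem:energy form estimate for dbarbspm} to local pieces, then prove $\Qbp(\vp^+_\nu,\vp^+_\nu)\gtrsim tB_{\lp}\|\vp^+_\nu\|_{\lp}^2$ and the $\Cm$ analogue---matches the paper. The gap is in how you extract positivity on each piece.

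You propose to ``simultaneously diagonalize $(c_{jk})$ and $(\Theta^{\lp}_{jk})$ by a unitary rotation of the orthonormal frame.'' Two Hermitian matrices can be simultaneously unitarily diagonalized only if they commute, and there is no reason the Levi form and the complex Hessian of the weight should commute. Even if they did, Definition~\ref{defn:compatible_functions} mixes eigenvalues $b_j$ of $\Theta$ with \emph{diagonal entries} $\Theta_{11},\dots,\Theta_{mm}$ in the specific frame coming from weak $Z(q)$; after a rotation these diagonal entries change, so the hypothesis no longer reads off as a statement about eigenvalue sums alone. More fundamentally, your argument never uses the integer $m$: weak $Z(q)$ does \emph{not} say that the sum of any $q$ eigenvalues of $(c_{jk})$ is nonnegative, only that $\mu_1+\cdots+\mu_q-(c_{11}+\cdots+c_{mm})\geq 0$ in the chosen frame. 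Without subtracting the $c_{\ell\ell}$ contribution you cannot apply G{\aa}rding to the $T$ term, since its coefficient matrix may have a negative $q$-eigenvalue sum.

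The paper's mechanism is different. In the fixed frame coming from weak $Z(q)$ one integrates by parts \emph{only on the indices} $j=1,\dots,m$, converting $\sum_{j=1}^m\|\Lb_j\vp_J\|_{\lp}^2$ into $\sum_{j=1}^m\|\Lbap_j\vp_J\|_{\lp}^2$ via \eqref{eqn:int_by_parts}. This produces the extra terms $-\sum_J\sum_{j=1}^m (c_{jj}T\vp_J,\vp_J)_{\lp}$ and $-\sum_J\sum_{j=1}^m(\Theta_{jj}\vp_J,\vp_J)_{\lp}$, which combine with the original Levi and Hessian sums to give quadratic forms with matrices
\[
h^+_{jk}=c_{jk}-\tfrac{1}{q}\delta_{jk}\sum_{\ell=1}^m c_{\ell\ell},\qquad
\Gamma^{\lp}_{jk}=\Theta^{\lp}_{jk}-\tfrac{1}{q}\delta_{jk}\sum_{\ell=1}^m \Theta^{\lp}_{\ell\ell}.
\]
The eigenvalues of $h^+$ are $\mu_j-\tfrac{1}{q}\sum_{\ell=1}^m c_{\ell\ell}$, so the smallest sum of $q$ of them is exactly $\mu_1+\cdots+\mu_q-\sum_{\ell=1}^m c_{\ell\ell}\geq 0$ by weak $Z(q)$; Lemma~\ref{lem:linear algebra} then makes the associated form nonnegative and the sharp G{\aa}rding inequality (Proposition~\ref{prop:Garding 1}, packaged as Lemma~\ref{lem:T bound lp}) handles the $T$. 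Likewise the smallest $q$-eigenvalue sum of $\Gamma^{\lp}$ is $\geq tB_{\lp}$ by $q$-compatibility (with the sign of $\lp$ chosen according to $m\lessgtr q$), which yields the desired $tB_{\lp}\|\vp^+_\nu\|_{\lp}^2$. No diagonalization of either matrix is needed; Lemma~\ref{lem:linear algebra} is what turns the eigenvalue hypothesis into a pointwise form inequality.
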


 For Theorem \ref{thm:main theorem for unweighted}, we will use
$\lam_1=\lam_2 = |z|^2$.

\subsection{Local Estimates}

The crucial multilinear algebra that we need is contained in the following lemma from Straube \cite{Str09}:
\begin{lem}\label{lem:linear algebra}
Let $B = (b_{jk})_{1\leq j,k\leq n}$ be a Hermitian matrix and $1\leq q \leq n$. The following are equivalent:
\begin{enumerate}\renewcommand{\labelenumi}{(\roman{enumi})}
\item If $u\in \Lambda^{(0,q)}$, then $\displaystyle \sum_{K\in\I_{q-1}} \sum_{j,k=1}^n b_{jk} u_{jK} \overline{u_{kK}} \geq M |u|^2$.
\item The sum of any $q$ eigenvalues of $B$ is at least $M$.
\item $\displaystyle \sum_{s=1}^q \sum_{j,k=1}^n b_{jk} t^s_j \overline{t^s_k} \geq M$ whenever $t^1,\dots, t^q$ are orthonormal in $\C^n$.
\end{enumerate}
\end{lem}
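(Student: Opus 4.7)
The plan is to prove (ii) $\Leftrightarrow$ (iii) by the Ky Fan minimax principle, and then (i) $\Leftrightarrow$ (ii) by unitarily diagonalizing $B$ and recognizing the left side of (i) as the quadratic form induced by $B$ on the $q$-th exterior power $\Lambda^q\C^n$.

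Let $\mu_1\leq\cdots\leq\mu_n$ be the eigenvalues of $B$, repeated with multiplicity. Statement (ii) is equivalent to $\mu_1+\cdots+\mu_q\geq M$, since this is the smallest sum of any $q$ eigenvalues. For (iii), given an orthonormal $q$-frame $t^1,\dots,t^q$ spanning a subspace $V\subset\C^n$, one has $\sum_{s=1}^q\sum_{j,k}b_{jk}t^s_j\overline{t^s_k}=\sum_{s=1}^q\langle Bt^s,t^s\rangle=\mathrm{tr}(B|_V)$. By the Ky Fan / Courant--Fischer minimax principle, the infimum of $\mathrm{tr}(B|_V)$ over $q$-dimensional subspaces $V$ equals $\mu_1+\cdots+\mu_q$, attained on the span of eigenvectors for the $q$ smallest eigenvalues. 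Hence (iii) with constant $M$ is equivalent to $\mu_1+\cdots+\mu_q\geq M$, i.e.\ to (ii).

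For (i) $\Leftrightarrow$ (ii), I would reduce to the diagonal case. Writing $B=U^*DU$ with $D=\mathrm{diag}(\mu_1,\dots,\mu_n)$ and $U$ unitary, the quadratic form on the left of (i) is the Hermitian form on $\Lambda^q\C^n$ induced by $B$, so pulling back via the unitary $\Lambda^qU$ replaces $B$ by $D$ and $u$ by $\tilde u=(\Lambda^qU)u$, with $|\tilde u|^2=|u|^2$. Hence one may assume $B=D$. Because the off-diagonal entries of $D$ vanish, only the $j=k$ terms survive, and reorganizing by multi-index $J$ — using that for each $J\in\I_q$ there are exactly $q$ pairs $(j,K)$ with $j\in J$ and $K=J\setminus\{j\}$, each contributing $\mu_j|u_J|^2$ — produces
\[
\sum_{K\in\I_{q-1}}\sum_{j,k=1}^n d_{jk}u_{jK}\overline{u_{kK}}=\sum_{J\in\I_q}\Big(\sum_{j\in J}\mu_j\Big)|u_J|^2.
\]
Comparing with $|u|^2=\sum_J|u_J|^2$, (i) is equivalent to $\sum_{j\in J}\mu_j\geq M$ for every $J\in\I_q$; the minimum over $J$ is $\mu_1+\cdots+\mu_q$, giving (ii).

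The main obstacle is the covariance step — verifying that the expression $\sum_{K,j,k}b_{jk}u_{jK}\overline{u_{kK}}$ really does transform as the induced form on $\Lambda^q\C^n$ under a unitary change of orthonormal basis. The cleanest route is to identify this expression with $\langle B^{(q)}u,u\rangle_{\Lambda^q}$, where $B^{(q)}$ is the derivation on $\Lambda^{\bullet}\C^n$ acting by $B^{(q)}(v_1\wedge\cdots\wedge v_q)=\sum_{r=1}^q v_1\wedge\cdots\wedge Bv_r\wedge\cdots\wedge v_q$. This operator is intrinsically defined, its spectrum on the eigenbasis $e_{j_1}\wedge\cdots\wedge e_{j_q}$ is $\mu_{j_1}+\cdots+\mu_{j_q}$, and it automatically intertwines with $\Lambda^qU$, so covariance is immediate. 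Alternatively the transformation law can be verified by direct bookkeeping on the coefficients $u_J$. Once this is in place, the computation above finishes the proof.
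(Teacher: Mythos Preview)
Your argument is correct. The equivalence (ii)$\Leftrightarrow$(iii) is exactly the Ky Fan minimax principle, and for (i)$\Leftrightarrow$(ii) your identification of the left side of (i) with the quadratic form of the derivation $B^{(q)}$ on $\Lambda^q\C^n$ is the right intrinsic description; once that is in place, unitary covariance and the diagonal computation go through as you describe. The one point you flag yourself---that $\sum_{K,j,k}b_{jk}u_{jK}\overline{u_{kK}}=\langle B^{(q)}u,u\rangle_{\Lambda^q}$---is a standard identity and can be checked on pairs of basis elements $e_J,e_{J'}$ (only $|J\cap J'|\geq q-1$ contributes, and the signs match).

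As for comparison with the paper: there is nothing to compare. The paper does not prove this lemma; it simply quotes it from Straube \cite{Str09} (where, incidentally, the same derivation-on-$\Lambda^q$ viewpoint underlies the argument). So your write-up would in fact supply a proof where the paper gives only a citation.
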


We work on a fixed $U = U_\nu$. On this neighborhood, as above, there exists an orthonormal basis of vector fields
$L_1,\dots,L_{n}$, $\Lb_1,\dots,\Lb_n$ so that
\begin{equation}\label{eqn:L, Lb adjoint 1}
[L_j,\Lb_k] = c_{jk}T + \sum_{\ell=1}^{n-1} (d_{jk}^\ell L_\ell - \bar d_{kj}^\ell \Lb_\ell)
\end{equation}
if $1\leq j,k\leq n-1$, and $T = L_n-\Lb_n$.  Note that $c_{jk}$ are the coefficients of the Levi form.
Recall that $\Lbap$, $\Lba$, and $\Lbam$ are the adjoints of $\Lb$ in $(\cdot,\cdot)_\lp$, $(\cdot,\cdot)_0$,
and $(\cdot,\cdot)_\lm$, respectively. From \eqref{eqn:dbarb adjoints}, we see that
\[
\Lbal_j = -L_j+L_j\lam -f_j\,
\]
and plugging this into (\ref{eqn:L, Lb adjoint 1}),  we have
\begin{equation} \label{eqn:Lba, Lb commutator}
[\Lbal_j,\Lb_k] = -c_{jk}T + \sum_{\ell=1}^{n-1} \Big(d_{jk}^\ell (\Lbal_\ell-L_\ell\lam+f_\ell) + \bar d_{kj}^\ell \Lb_\ell\Big)
-\Lb_k L_j\lam +\Lb_kf_j.
\end{equation}

Because of Lemma \ref{lem:energy form estimate for dbarbspm}, we may turn our attention to the the quadratic
\[
\Qblp = (\dbarb\vp,\dbarb\vp)_\lam + (\dbarbsl\vp,\dbarbsl\vp)_\lam.
\]
We introduce the error term
\[
  E(\varphi)\leq C\left(\|\vp\|^2_\lam+\sum_{j=1}^{n-1}|(h\Lb_j\vp,\vp)_\lam|\right)=C\left(\|\vp\|^2_\lam+\sum_{j=1}^{n-1}|(\tilde h \Lbal_j\vp,\vp)_\lam|\right)
\]
where the operators $\Lb_j$ and $\Lbal_j$ act componentwise,  $C$ is a constant independent of $\vp$ and $\lam$, and
$h$ and $\tilde h$ are bounded functions that are independent of $t$, $A$, $\lp$, $\lm$, and the other quantities that are carefully minding.
Recall the definition that $\vp_{jK} = \sum_{J\in \I_q} \ep^{jK}_J \vp_J$.
As in the proof of Lemma 4.2 in \cite{Rai10c}, we compute that for smooth $\vp$ supported in
a special boundary neighborhood,
\begin{align}
&\Qblp = \sum_{J\in\I_q} \sum_{j=1}^{n-1} \|\Lb_j \vp_J \|_{\lam}^2+\sum_{I\in \I_{q-1}} \sum_{j,k=1}^{n-1}\Rre  \big(c_{jk}T\vp_{jI},\vp_{kI} \big)_\lam + E(\varphi) \nn \\
&+\sum_{I\in \I_{q-1}} \sum_{j,k=1}^{n-1}\Bigg\{\frac12 \big( (\Lb_j L_k \lam + L_j \Lb_k\lam)\vp_{jI}, \vp_{kI} \big)_\lam
+ \frac 12\sum_{\ell=1}^{n-1} \big( (d^{\ell}_{jk} L_\ell \lam + \overline{d^\ell_{jk}} \Lb_\ell \lam) \vp_{jI}, \vp_{kI} \big)_\lam \Bigg\} \label{eqn:Qblp basic}.
\end{align}

The weak $Z(q)$-hypothesis suggests that we ought to integrate by parts to take advantage of the positivity/negativity conditions.  By \eqref{eqn:Lba, Lb commutator} and integration by parts, we have
\begin{equation}
\label{eqn:int_by_parts}
  \|\Lb_j \vp_J\|^2_\lam-\|\Lbal_j\vp_J\|^2_\lam=-\Rre (c_{jj}T\vp_J,\vp_J)-\sum_{\ell=1}^{n-1}\Rre \left(d_{jj}^\ell(L_\ell\lam)\vp_J,\vp_J\right)-\Rre ((\Lb_j L_j\lam)\vp_J,\vp_J)+E(\vp).
\end{equation}

Consequently, we can use \eqref{eqn:Lba, Lb commutator}   and
\eqref{eqn:int_by_parts} to obtain
\begin{align}
&\Qblp = \sum_{J\in\I_q}\Big\{ \sum_{j=1}^{m} \|\Lbal_j \vp_J \|_{\lam}^2
+  \sum_{j=m+1}^{n-1} \|\Lb_j \vp_J \|_{\lam}^2 \Big\}+ E(\vp) \nn \\
&+\sum_{I\in \I_{q-1}} \sum_{j,k=1}^{n-1}\Rre  \big(c_{jk}T\vp_{jI},\vp_{kI} \big)_\lam-\sum_{J\in \I_q} \sum_{j=1}^m \Rre  \big(c_{jj}T\vp_J,\vp_J \big)_\lam  \nn \\
&+\sum_{I\in \I_{q-1}} \sum_{j,k=1}^{n-1}\Bigg\{\frac12 \big( (\Lb_j L_k \lam + L_j \Lb_k\lam)\vp_{jI}, \vp_{kI} \big)_\lam
+ \frac 12\sum_{\ell=1}^{n-1} \big( (d^{\ell}_{jk} L_\ell \lam + \overline{d^\ell_{jk}} \Lb_\ell \lam) \vp_{jI}, \vp_{kI} \big)_\lam \Bigg\}\label{eqn:Qblp after parts for Cp}\\
&-\sum_{J\in \I_q} \sum_{j=1}^m \Bigg\{ \frac12 \big( (\Lb_j L_j \lam + L_j \Lb_j\lam)\vp_J, \vp_J \big)_\lam
+ \frac 12\sum_{\ell=1}^{n-1} \big( (d^{\ell}_{jj} L_\ell \lam + \overline{d^\ell_{jj}} \Lb_\ell \lam) \vp_J, \vp_J \big)_\lam \Bigg\}.\nn
\end{align}

We are now in a position to control the ``bad" direction terms. Recall the following consequence of the sharp G{\aa}rding inequality from
\cite{Rai10c}.
\begin{prop}\label{prop:Garding 1}
Let $R$ be a first order pseudodifferential operator such that  $\sigma(R)\geq\kappa$ where $\kappa$ is some positive
constant and $(h_{jk})$ a hermitian matrix (that does not depend on $\xi$).
Then there exists a constant $C$ such that if the sum of any $q$ eigenvalues of $(h_{jk})$ is nonnegative, then
\[
\Rre \Big\{\sum_{I\in\I_{q-1}}\sum_{j,k=1}^{n-1}\big(h_{jk}R u_{jI}, u_{kI}\big)
 \Big \}
\geq \kappa  \Rre \sum_{I\in\I_{q-1}}\sum_{j,k=1}^{n-1} \big(h_{jk} u_{jI}, u_{kI}\big)
-C \|u\|^2,
\]
and if the
the sum of any collection of $(n-1-q)$ eigenvalues of $(h_{jk})$ is nonnegative, then
\begin{multline*}
\Rre \Big\{\sum_{J\in\I_q}\sum_{j=1}^{n-1} \big(h_{jj}R u_J, u_J\big)
-\sum_{I\in\I_{q-1}}\sum_{j,k=1}^{n-1}\big(h_{jk}R u_{jI}, u_{kI}\big)\Big \} \\
\geq \kappa  \Rre \Big\{\sum_{J\in\I_q}\sum_{j=1}^{n-1} \big(h_{jj} u_J, u_J\big)
-\sum_{I\in\I_{q-1}}\sum_{j,k=1}^{n-1} \big(h_{jk} u_{jI}, u_{kI}\big) \Big \} -C \|u\|^2 .
\end{multline*}
\end{prop}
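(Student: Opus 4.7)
The plan is to reduce both inequalities to the sharp G{\aa}rding inequality for first-order matrix-valued pseudodifferential operators, with the pointwise positivity of the matrix-valued principal symbol being supplied by Lemma \ref{lem:linear algebra}. Fix a local orthonormal frame so that $(0,q)$-forms are identified with $\C^{\binom{n-1}{q}}$-valued functions, and for the hermitian matrix $(h_{jk}(x))$ define the self-adjoint $\binom{n-1}{q}\times\binom{n-1}{q}$ matrix $H(x)$ as the matrix of the quadratic form
\[
u\;\longmapsto\;\sum_{I\in\I_{q-1}}\sum_{j,k=1}^{n-1}h_{jk}(x)\,u_{jI}\overline{u_{kI}}
\]
in the standard basis $\{\ob_J\}_{J\in\I_q}$. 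The entries of $H(x)$ are smooth since the $h_{jk}$ are, and by Lemma \ref{lem:linear algebra} (with $M=0$) the hypothesis that the sum of any $q$ eigenvalues of $(h_{jk})$ is nonnegative is \emph{equivalent} to $H(x)\geq 0$ at every point.

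With this setup, the first estimate amounts to a lower bound on $\Rre\bigl(H(R-\kappa I)u,u\bigr)$. Consider the operator $P:=H(R-\kappa I)$ acting componentwise on $(0,q)$-forms. It is a classical matrix-valued pseudodifferential operator of order one whose principal symbol is
\[
\sigma_1(P)(x,\xi)\;=\;H(x)\bigl(\sigma(R)(x,\xi)-\kappa\bigr),
\]
a positive scalar factor $\sigma(R)-\kappa\geq 0$ times a positive semi-definite hermitian matrix, hence itself positive semi-definite for $|\xi|\geq 1$. The matrix-valued sharp G{\aa}rding inequality then gives
\[
\Rre(Pu,u)\;\geq\;-C\|u\|_0^2,
\]
which rearranges to the first stated inequality.

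For the second inequality the same strategy applies once one reinterprets the combination on the left. Diagonalizing $(h_{jk})$ with eigenvalues $\mu_1\leq\cdots\leq\mu_{n-1}$ and computing in the corresponding basis yields the pointwise identity
\[
\sum_{J\in\I_q}\sum_{j=1}^{n-1}h_{jj}|u_J|^2\;-\;\sum_{I\in\I_{q-1}}\sum_{j,k=1}^{n-1}h_{jk}\,u_{jI}\overline{u_{kI}}\;=\;\sum_{J\in\I_q}\Bigl(\sum_{j\notin J}\mu_j\Bigr)|u_J|^2,
\]
so nonnegativity of this quadratic form on $(0,q)$-forms is equivalent (by minimizing over $|J|=q$) to the sum of the smallest $n-1-q$ eigenvalues of $(h_{jk})$ being nonnegative, which is exactly our hypothesis. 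Consequently the associated symmetric matrix $\widetilde H(x)$ satisfies $\widetilde H(x)\geq 0$, and applying sharp G{\aa}rding to $\widetilde H(R-\kappa I)$ gives the second inequality.

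The only nontrivial point is verifying that the matrix-valued sharp G{\aa}rding inequality applies with the error in $\|u\|_0^2$ (rather than in a fractional Sobolev norm). This is standard for first-order classical systems with smooth, nonnegative principal symbol, and the construction of $H(x)$ and $\widetilde H(x)$ above produces precisely such a system; I expect no real obstacle beyond bookkeeping to ensure the lower-order contributions from commutators and from $R-\sigma(R)$ are absorbed into the $C\|u\|_0^2$ term.
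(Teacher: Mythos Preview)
Your proposal is correct and matches the approach the paper indicates: the proposition is not proved in the paper but is stated as a consequence of the sharp G{\aa}rding inequality taken from \cite{Rai10c}, and you have correctly supplied the missing ingredient, namely that the pointwise nonnegativity of the matrix-valued principal symbol follows from Lemma~\ref{lem:linear algebra} in the first case and from the analogous eigenvalue identity (sum over $j\notin J$) in the second. One minor remark: in the second case your diagonalization is only used pointwise to verify $\widetilde H(x)\geq 0$, while $\widetilde H$ itself is defined in the fixed frame and is smooth in $x$; this is implicit in what you wrote but worth making explicit since the eigenbasis of $(h_{jk}(x))$ need not vary continuously.
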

Note that $(h_{jk})$ may be a matrix-valued function in $z$ but may not depend on $\xi$.

The following lemma is the analog of Lemma 4.6 in \cite{Rai10c}.
\begin{lem} \label{lem:T bound lp}
Let  $M$ be as in Theorem \ref{thm:main theorem for weighted spaces} and $\vp$ a $(0,q)$-form supported on $U$ so that
up to a smooth term $\hat\vp$ is supported in $\Cp$.
Let
\[
(h^+_{jk}) = (c_{jk}) - \delta_{jk}\frac 1q \sum_{\ell=1}^m c_{\ell\ell}.
\]
Then
\begin{multline*}
\Rre \Big\{\sum_{I\in\I_{q-1}}\sum_{j,k=1}^{n-1}  \big(h^+_{jk}T \vp_{jI}, \vp_{kI}\big)_\lam\Big\}\\
\geq t A  \Rre \Big\{\sum_{I\in\I_{q-1}}\sum_{j,k=1}^{n-1} \big(h^+_{jk} \vp_{jI}, \vp_{kI}\big)_\lam
\Big \}
- O(\|\vp\|_{\lam}^2) - O_{t} (\|\tzn\tPsol\vp\|_0^2).
\end{multline*}
where the constant in $ O(\|\vp\|_{\lam}^2)$ does not depend on $t$.
\end{lem}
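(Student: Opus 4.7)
The plan is to invoke the sharp G\aa{}rding inequality (Proposition~\ref{prop:Garding 1}) for a first-order pseudodifferential operator designed to realize the effective symbol $\xi_{2n-1}-tA$ on the microlocal region where $\hat\vp$ lives. First I check the hypothesis on $(h^+_{jk})$: subtracting the scalar matrix $\tfrac{1}{q}(c_{11}+\cdots+c_{mm})I$ shifts each eigenvalue of $(c_{jk})$ by the same amount, so the sum of any $q$ eigenvalues of $(h^+_{jk})$ is at least $\mu_1+\cdots+\mu_q-(c_{11}+\cdots+c_{mm})\geq 0$ by weak $Z(q)$ (we are in the case $m=m_q<q$, the one corresponding to microlocalization in $\Cp$). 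By Lemma~\ref{lem:linear algebra}, this is exactly the positivity required for Proposition~\ref{prop:Garding 1}.

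Next I construct the auxiliary operator. Pick a smooth nondecreasing cutoff $\chi:[0,\infty)\to[0,1]$ with $\chi(s)=0$ for $s\leq\sqrt{5}$, $\chi(s)=1$ for $s\geq 2\sqrt{5}$, and $\chi(s)\leq s/\sqrt{5}$ throughout, and let $\Psi$ be the zeroth-order operator with symbol $\chi(|\xi|/(tA))$. The key geometric input is that on $\Cp$ one has $|\xi'|\leq 2\xi_{2n-1}$ and hence $\xi_{2n-1}\geq|\xi|/\sqrt{5}$, so
\[
\sigma(R)(\xi):=\xi_{2n-1}-tA\,\chi(|\xi|/(tA))
\]
is nonnegative on $\Cp$. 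Multiplying $\sigma(R)$ by an angular cutoff $\eta(\xi)$ (depending only on $\xi/|\xi|$) that equals $1$ on $\Cp$ and is supported in a slight enlargement of $\Cp$ produces a first-order symbol that is nonnegative everywhere; since $\hat\vp$ lies in $\Cp$ modulo smoothing, the corresponding operator agrees with $T-tA\Psi$ modulo smoothing when applied to $\vp$. Then Proposition~\ref{prop:Garding 1} in its $\kappa\to 0$ limiting form (standard matrix sharp G\aa{}rding applied to the PSD scalar-times-Hermitian symbol $\sigma(R)(h^+_{jk})$) yields
\[
\Rre\sum_{I,j,k}\big(h^+_{jk}T\vp_{jI},\vp_{kI}\big)_\lam\geq tA\,\Rre\sum_{I,j,k}\big(h^+_{jk}\Psi\vp_{jI},\vp_{kI}\big)_\lam-O(\|\vp\|_\lam^2),
\]
with the rescaling estimate~\eqref{eqn:inverse order zero t} guaranteeing that the constant in $O(\|\vp\|_\lam^2)$ is $t$-independent.

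To finish, write $\Psi=I-(I-\Psi)$. The symbol $1-\chi(|\xi|/(tA))$ vanishes outside $\{|\xi|\leq 2\sqrt{5}tA\}$, and by enlarging $\tilde\psi^0$ (in the sense of domination of Lemma~\ref{lem: neighborhood intersection lemma}(iii)) so that $\tilde\psi^0(\xi/(tA))\equiv 1$ on this ball, one gets $\|(I-\Psi)\vp\|_0\leq C\|\tzn\tPsol\vp\|_0$ modulo smoothing. Cauchy--Schwarz with weighted AM--GM then absorbs
\[
tA\,\big|\big(h^+_{jk}(I-\Psi)\vp_{jI},\vp_{kI}\big)_\lam\big|\leq\varepsilon\|\vp\|_\lam^2+\tfrac{(tAC)^2}{4\varepsilon}\|\tzn\tPsol\vp\|_0^2
\]
into the allowed error terms, with $\varepsilon$ a small $t$-independent constant. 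The main obstacle throughout is $t$-uniformity: because $\Psi$ depends on $t$ via the rescaling $\xi/(tA)$, one must use \eqref{eqn:inverse order zero t} together with the angular nature of $\eta$ to ensure that neither the symbol class of $R$ nor the G\aa{}rding constant introduces $t$-growth into $O(\|\vp\|_\lam^2)$; all $t$-growth is thereby confined to the $(I-\Psi)\vp$ contribution, which is precisely where the $O_t$ tolerance lives.
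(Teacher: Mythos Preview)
Your argument is correct and follows essentially the same route as the paper: verify that the sum of any $q$ eigenvalues of $(h^+_{jk})$ is nonnegative via weak $Z(q)$, then run a sharp G\aa{}rding argument on an operator whose symbol is $\xi_{2n-1}$ corrected by a rescaled cutoff so that the $tA$ lower bound emerges on $\Cp$, with the residual low-frequency piece thrown onto $\tPsol$. The paper's own proof is simply a two-line citation of Proposition~4.6 in \cite{Rai10c} with the substitutions $c_{jk}\mapsto h^+_{jk}$ and $A\mapsto tA$; you have effectively written out what that cited argument does, and your care in tracking $t$-uniformity of the first-order symbol seminorms via \eqref{eqn:inverse order zero t} is exactly the point.

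One small slip: your parenthetical ``we are in the case $m=m_q<q$, the one corresponding to microlocalization in $\Cp$'' is not needed and is misleading. The lemma is stated and used for both $m<q$ and $m>q$; the weak $Z(q)$ inequality $\mu_1+\cdots+\mu_q-(c_{11}+\cdots+c_{mm})\geq 0$ holds in either case and is all the eigenvalue check requires. The distinction between $m<q$ and $m>q$ only enters later, in Proposition~\ref{prop:local results for Pspl}, through the choice of sign for $\lp$.
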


\begin{proof}
Observe that the eigenvalues of $(h^+_{jk})$ are $\mu_j - \frac 1q \sum_{\ell=1}^m c_{\ell\ell}$, so the smallest possible sum of any $q$ eigenvalues
of $(h^+_{jk})$ is
\[
\mu_1 + \cdots +\mu_q -  \sum_{\ell=1}^m c_{\ell\ell} \geq 0.
\]
With this inequality in hand, we employ the argument of Proposition 4.6 from \cite{Rai10c} with the following changes. First, we replace
$c_{jk}$ with $h^+_{jk}$. Also, we replace the $A$ with $tA$ (for example, the sentence ``By construction, $\xi_{2n-1}\geq A$ in $\Cp$ \dots" gets replaced
by ``By construction, $\xi_{2n-1}\geq tA$ in $\Cp$ \dots").
\end{proof}
Observe that
\begin{multline}\label{eqn:hjk with T for Cp}
\sum_{I\in \I_{q-1}} \sum_{j,k=1}^{n-1} \Rre  \big(c_{jk}T\vp_{jI},\vp_{kI} \big)_\lam
-\sum_{J\in \I_q} \sum_{j=1}^m \Rre  \big(c_{jj}T\vp_J,\vp_J \big)_\lam=\\
\Rre \Big\{\sum_{I\in\I_{q-1}}\sum_{j,k=1}^{n-1}  \big(h^+_{jk}T \vp_{jI}, \vp_{kI}\big)_\lam
 \Big \}.
\end{multline}

Now that we can eliminate the $T$ terms, we turn to controlling the remaining terms.
%
%

\begin{prop} \label{prop:local results for Pspl}
Let $\vp\in\Dom(\dbarb)\cap\Dom(\dbarbs)$ be a $(0,q)$-form supported in $U$. Assume that
$\lam$ is a $q$-compatible function with positivity constant $B_{\lp}$. If $m<q$, choose $\lp = t\lam$ and if
$m>q$, choose $\lp = -t\lam$.
Then there exists a constant $C$ that is independent of $B_{\lp}$ so that
\[
\Qbp(\tz\Pspl\vp,\tz\Pspl\vp) + C\|\tz\Pspl\vp\|_\lp^2 + O_{t}(\|\tz\tPsol\vp\|_0^2)
\geq t B_{\lp} \|\tz\Pspl\vp\|_{\lp}^2.
\]
\end{prop}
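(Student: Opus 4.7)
My plan is to apply the integration-by-parts identity \eqref{eqn:Qblp after parts for Cp} to $\psi := \tz\Pspl\vp$ with the weight $\lam$ replaced by $\lp = \pm t\lam$, and then convert each block of terms into a good lower bound. The identity writes $\Qbp(\psi,\psi)$ as the sum of (a) the nonnegative squared-gradient terms $\sum_{j\le m}\|\Lbap_j\psi_J\|_\lp^2 + \sum_{j>m}\|\Lb_j\psi_J\|_\lp^2$, (b) the Levi $T$-pieces $\sum_{I,j,k}(c_{jk}T\psi_{jI},\psi_{kI})_\lp - \sum_{J,j\le m}(c_{jj}T\psi_J,\psi_J)_\lp$, (c) the $\lp$-Hessian pieces (both off-diagonal and the diagonal $j\le m$ correction), and (d) the error $E(\psi)$.

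For piece (b) I combine the two $T$-sums into $\sum_{I,j,k}h^+_{jk}(T\psi_{jI},\psi_{kI})_\lp$ via \eqref{eqn:hjk with T for Cp} and invoke Lemma \ref{lem:T bound lp} to replace $T$ by the scalar $tA$, incurring errors of size $O(\|\psi\|_\lp^2) + O_t(\|\tz\tPsol\vp\|_0^2)$. Since the eigenvalues of $h^+_{jk}$ are $\mu_j - \frac{1}{q}\sum_{\ell\le m}c_{\ell\ell}$ and any $q$-fold sum is $\ge \mu_1+\cdots+\mu_q-(c_{11}+\cdots+c_{mm})\ge 0$ by weak $Z(q)$, Lemma \ref{lem:linear algebra}(i) makes piece (b) nonnegative modulo those errors; it contributes no positivity toward $tB_\lp$, but also no obstruction.

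For piece (c) I use Proposition \ref{prop:dbar vs. dbar_b on M} to identify
\[
\tfrac12\bigl(\Lb_j L_k\lp + L_j\Lb_k\lp\bigr) + \tfrac12\sum_\ell\bigl(d^\ell_{jk}L_\ell\lp + \overline{d^\ell_{jk}}\Lb_\ell\lp\bigr)
\]
with the matrix entries $\Theta^\lp_{jk}$ in the orthonormal frame, and then use the combinatorial identity $\sum_{J\in\mathcal{I}_q}|\psi_J|^2 = \frac{1}{q}\sum_{I\in\mathcal{I}_{q-1}}\sum_j|\psi_{jI}|^2$ to rewrite the diagonal correction $-\sum_{J,j\le m}\Theta^\lp_{jj}|\psi_J|^2$ as $-\frac{1}{q}\sum_{I,j,k}\delta_{jk}\bigl(\sum_{\ell\le m}\Theta^\lp_{\ell\ell}\bigr)\psi_{jI}\bar\psi_{kI}$. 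Piece (c) thus equals $\sum_{I,j,k}\tilde h_{jk}\psi_{jI}\bar\psi_{kI}$ with $\tilde h_{jk} = \Theta^\lp_{jk} - \delta_{jk}\frac{1}{q}\sum_{\ell\le m}\Theta^\lp_{\ell\ell}$. When $m<q$ and $\lp=t\lam$, condition (ii) of Definition \ref{defn:compatible_functions} gives that the minimal $q$-fold eigenvalue sum of $\tilde h$ is $t(b_1+\cdots+b_q) - t\sum_{\ell\le m}\Theta^\lam_{\ell\ell} \ge tB_\lp$; when $m>q$ and $\lp=-t\lam$, the sign flip converts condition (iii) into the same bound. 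Lemma \ref{lem:linear algebra}(i) then yields the pointwise inequality $\sum_{I,j,k}\tilde h_{jk}\psi_{jI}\bar\psi_{kI} \ge tB_\lp|\psi|^2$, which integrates against $e^{-\lp}\,dV$ to $tB_\lp\|\psi\|_\lp^2$.

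To finish, I absorb $E(\psi)$ and all $O(\|\psi\|_\lp^2)$ errors (from Lemma \ref{lem:T bound lp} and the small commutators needed to manipulate $\Lbap_j$) into $C\|\psi\|_\lp^2$, the $\Co$-supported errors into $O_t(\|\tz\tPsol\vp\|_0^2)$, and drop the nonnegative pieces (a) and the Levi/$T$ contribution. The main obstacle is the algebraic bookkeeping in piece (c): cleanly identifying the Hessian-plus-connection expression with $\Theta^\lp_{jk}$ via Proposition \ref{prop:dbar vs. dbar_b on M}, handling the $m>q$ case via the sign reversal in the $q$-compatibility inequality, and crucially verifying that $C$ depends only on the geometry of $M$ (through $f_j$, $d^\ell_{jk}$, and cutoff derivatives) and not on $B_\lp$—a subtle but essential point, since the $q$-compatible function $\lam$ may be scaled freely.
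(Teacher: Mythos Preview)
Your overall architecture matches the paper's proof: apply \eqref{eqn:Qblp after parts for Cp} to $\psi=\tz\Pspl\vp$, package the $T$-terms via \eqref{eqn:hjk with T for Cp} and Lemma~\ref{lem:T bound lp}, rewrite the Hessian block using the matrix $\Theta^{\lp}_{jk}-\frac{1}{q}\delta_{jk}\sum_{\ell\le m}\Theta^{\lp}_{\ell\ell}$, and invoke $q$-compatibility plus Lemma~\ref{lem:linear algebra}. However, there is a concrete gap in piece~(c).

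The expression $\tfrac12(\Lb_j L_k\lp+L_j\Lb_k\lp)+\tfrac12\sum_\ell(d^\ell_{jk}L_\ell\lp+\overline{d^\ell_{jk}}\Lb_\ell\lp)$ is the $(j,k)$-entry of $\tfrac12(\p_b\dbarb\lp-\dbarb\p_b\lp)$, \emph{not} of $\Theta^{\lp}$. By the definition \eqref{eqn:Theta^lam def}, $\Theta^{\lp}$ carries the extra summand $\tfrac12\nu(\lp)d\gamma$, so in fact this expression equals $\Theta^{\lp}_{jk}-\tfrac12\nu(\lp)c_{jk}$ (this is exactly what the paper records as $s^+_{jk}=\Theta^+_{jk}-\tfrac12\nu(\lp)c_{jk}$). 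Proposition~\ref{prop:dbar vs. dbar_b on M} does not erase this discrepancy; it merely says that $\Theta^{\lp}$ agrees with the ambient $\p\dbar\lp$ on tangent vectors. Consequently your piece~(c) is not $\sum\tilde h_{jk}\psi_{jI}\bar\psi_{kI}$ but rather that quantity minus $\tfrac12\nu(\lp)\sum h^+_{jk}\psi_{jI}\bar\psi_{kI}$. This residual term has size $O(t)$ and cannot be absorbed into $C\|\psi\|_{\lp}^2$ with $C$ independent of $t$; instead it must be merged with piece~(b). After Lemma~\ref{lem:T bound lp} the combined coefficient on $h^+_{jk}$ becomes $tA-\tfrac12\nu(\lp)$, and one needs the choice $A\ge\tfrac12\sup_M|\nu(\lam)|$ (which you do not mention) to keep it nonnegative. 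A secondary point: $E(\psi)$ contains first-order pieces $(h\Lb_j\psi,\psi)_{\lp}$ that cannot be absorbed into $C\|\psi\|_{\lp}^2$ alone; you must spend an $\ep$-fraction of the gradient block~(a) via a small constant/large constant argument, as the paper does with its $(1-\ep)$ prefactor, rather than simply dropping~(a).
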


\begin{proof}
Let
\[
s_{jk}^+ = \frac12 (\Lb_k L_j \lp + L_j \Lb_k\lp)+ \frac 12\sum_{\ell=1}^{n-1}(d^{\ell}_{jk} L_\ell \lp + \overline{d^\ell_{kj}} \Lb_\ell \lp)
\]
and
\[
r_{jk}^+ = s_{jk}^+
 - \frac 1q \delta_{jk}\sum_{\ell=1}^m s_{\ell \ell}
\]
In this case (\ref{eqn:Qblp after parts for Cp}) can be rewritten as
\begin{align*}
&\Qbp(\phi,\phi) = \sum_{J\in\I_q}\Big\{ \sum_{j=1}^{m} \|\Lbap_j \phi_J \|_{\lp}^2
+  \sum_{j=m+1}^{n-1} \|\Lb_j \phi_J \|_{\lp}^2 \Big\} + E(\varphi)\\
&+\sum_{I\in\I_{q-1}} \sum_{j,k=1}^{n-1} \Rre  \big( (r_{jk}^+ + h_{jk}^+ T)\phi_{jI},\phi_{kI}\big)_\lp.
\end{align*}
As noted in \cite{Nic06,Rai10c}, one can check that
if $L = \sum_{j=1}^{n-1} \xi_j L_j$ (where $\xi_j$ is constant), then
\[
\Big\la \frac12\big(\p_b\dbarb\lp - \dbarb\p_b\lp\big) , L\wedge\Lb\Big\ra
= \sum_{j,k=1}^{n-1} s_{jk}^+  \xi_j\bar\xi_k.
\]
This means that $s_{jk}^+  = \Theta_{jk}^+ -\frac 12\nu(\lp)c_{jk}$. Thus, if
\[
\Gamma^\lp_{jk} = \Theta^\lp_{jk}
- \frac 1q \delta_{jk}\sum_{\ell=1}^m \Theta^\lp_{\ell\ell}
\]
then
\begin{multline*}
\Qbp(\phi,\phi) = \sum_{J\in\I_q}\Big\{ \sum_{j=1}^{m} \|\Lbap_j \phi_J \|_{\lp}^2
+  \sum_{j=m+1}^{n-1} \|\Lb_j \phi_J \|_{\lp}^2 \Big\} + E(\varphi)\\
+\sum_{I\in\I_{q-1}} \sum_{j,k=1}^{n-1} \Rre  \Big(\big (\Gamma_{jk}^\lp + h_{jk}^+ (T-\frac 12\nu(\lp))\big )\phi_{jI},\phi_{kI}\Big)_\lp.
\end{multline*}

Next, we replace $\phi$ with $\tz\Pspl\vp$.
Since $\supp\tz\subset U'$, and
the Fourier transform of $\tz\Pspl\vp$ is supported in $\Cp$ up to a smooth term, we can use Lemma \ref{lem:T bound lp} to control the $T$ terms.
Therefore,
from (\ref{eqn:Qblp after parts for Cp}) and the form of $E(\varphi)$,   we have that
\begin{align*}
\Qbp(\tz\Pspl\vp,\tz\Pspl\vp)  &\geq (1-\ep) \sum_{J\in\I_q}\Big\{ \sum_{j=1}^{m} \|\Lbap_j \tz\Pspl\vp_J \|_{\lp}^2
+  \sum_{j=m+1}^{n-1} \|\Lb_j \tz\Pspl\vp_J \|_{\lp}^2 \Big\}  \\
&+\sum_{I\in \I_{q-1}} \sum_{j,k=1}^{n-1}  \Rre  \Big(\big (\Gamma_{jk}^\lp + h_{jk}^+ (t A-\frac 12\nu(\lp))\big)\tz\Pspl \vp_{jI},\tz\Pspl\vp_{kI} \Big)_\lp
 \\
 &- O(\|\tz \Pspl\vp\|_0^2)- O_{t} (\|\tzn\tPsol\vp\|_0^2) .
\end{align*}
If we choose $A\geq\frac{1}{2}|\nu(\lam)|$, then $t A -\frac 12 \nu(\lp) \geq 0$. Since the sum of any $q$ eigenvalues of
$(h_{jk}^+)$ is nonnegative, these terms are strictly positive.
If $m<q$, then
the sum of any $q$ eigenvalues of $\Gamma^\lp$ is the sum of $q$ eigenvalues  of $t\Theta^\lam$ minus the sum of the first $m$ diagonal terms of $t\Theta^\lam$.
If $m>q$, the sum of any $q$ eigenvalues of $\Gamma^\lp$ is  the sum of the first $m$ diagonal terms of $t\Theta^\lam $ minus
the sum of $q$ eigenvalues of of $t\Theta^\lam$. In either case,
by the $q$-compatibility of $\lam$, we know that this sum is at least $t B_{\lam^+}$ where
$B_{\lam^+}$ is the positivity constant of $\lam$.
By Lemma \ref{lem:linear algebra}, this means that
\[
\Qbp(\tz\Pspl\vp,\tz\Pspl\vp) + C \|\tz \Pspl\vp\|_0^2 + O_{t} (\|\tzn\tPsol\vp\|_0^2) \geq t B_{\lp} \|\tz\Pspl\vp\|_{\lp}^2.
\]
\end{proof}

%
%
Observe that the statement of Proposition \ref{prop:local results for Pspl} is independent of the choice of local coordinates $L_1,\ldots,L_{n-1}$ and $m\neq q$.  Hence, to handle the terms with support in $\Cm$, we may choose new local coordinates and a new value of $m$ so that Definitions \ref{defn:weak Z(q)} and \ref{defn:compatible_functions} hold with $(n-1-q)$ in place of $q$.  We again integrate (\ref{eqn:Qblp basic}) by parts and compute
\begin{align}
&\Qblp = \sum_{J\in\I_q}\Big\{ \sum_{j=1}^{m}  \|\Lb_j \vp_J \|_{\lam}^2
+  \sum_{j=m+1}^{n-1} \|\Lbal_j \vp_J \|_{\lam}^2 \Big\} + E(\vp) \nn \\
&+\sum_{I\in \I_{q-1}} \sum_{j,k=1}^{n-1}  \Rre  \big(c_{jk}T\vp_{jI},\vp_{kI} \big)_\lam-\sum_{J\in \I_q} \sum_{j=m+1}^{n-1} \Rre  \big(c_{jj}T\vp_J,\vp_J \big)_\lam \nn\\
&+\sum_{I\in \I_{q-1}} \sum_{j,k=1}^{n-1} \Bigg\{ \frac12 \big( (\Lb_j L_k \lam + L_j \Lb_k\lam)\vp_{jI}, \vp_{kI} \big)_\lam
+ \frac 12\sum_{\ell=1}^{n-1} \big( (d^{\ell}_{jk} L_\ell \lam + \overline{d^\ell_{jk}} \Lb_\ell \lam) \vp_{jI}, \vp_{kI} \big)_\lam \Bigg\}
\label{eqn:Qblp after parts for Cm}\\
&-\sum_{J\in \I_q} \sum_{j=m+1}^{n-1} \Bigg\{ \frac12 \big( (\Lb_j L_j \lam + L_j \Lb_j\lam)\vp_J, \vp_J \big)_\lam
+ \frac 12\sum_{\ell=1}^{n-1} \big( (d^{\ell}_{jj} L_\ell \lam + \overline{d^\ell_{jj}} \Lb_\ell \lam) \vp_J, \vp_J \big)_\lam \Bigg\}\nn
\end{align}

By the argument of Lemma \ref{lem:T bound lp}, we can also establish the following:
\begin{lem}\label{lem:T bound lm}
Let $M$ be as in Theorem \ref{thm:main theorem for weighted spaces} and
$\vp$ be a $(0,q)$-form supported on $U$ so that up to a smooth term, $\hat\vp$ is supported in $\Cm$.
Let
\[
(h^-_{jk}) = (c_{jk}) - \delta_{jk}\frac 1{n-1-q} \sum_{\ell=1}^m c_{\ell\ell}.
\]
Then
\begin{multline*}
\sum_{J\in\I_q} \sum_{j=1}^{n-1} \big(h^-_{jj}(-T) \vp_J, \vp_J\big)_\lam
- \sum_{I\in\I_{q-1}} \sum_{j,k=1}^{n-1}
 \big( h^-_{jk}(-T) \vp_{jI},\vp_{kI}\big)_\lam \\
\geq  t A\bigg(\sum_{J\in\I_q}\sum_{j=1}^{n-1} \big(h^-_{jj} \vp_J, \vp_J\big)_\lam
- \sum_{I\in\I_{q-1}} \sum_{j,k=1}^{n-1}
 \big( h^-_{jk} \vp_{jI},\vp_{kI}\big)_\lam \bigg)
+ O(\|\vp\|_{\lam}^2) + O_{t} (\|\tzn\tPsol\vp\|_0^2).
\end{multline*}
\end{lem}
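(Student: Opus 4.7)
The lemma is the mirror image of Lemma \ref{lem:T bound lp}, obtained by invoking Proposition \ref{prop:Garding 1} in its second form (rather than its first), with $-T$ playing on $\Cm$ the role that $T$ played on $\Cp$ and with $(n-1-q)$ in place of $q$. First I would verify the eigenvalue hypothesis for $(h^-_{jk})$. In the orthonormal local frame $L_1,\dots,L_{n-1}$ supplied by the weak $Z(n-1-q)$ condition (with associated integer $m=m_{n-1-q}$), choosing coordinates that diagonalize $(c_{jk})$ at a reference point makes the eigenvalues of $(h^-_{jk})$ equal to $\mu_j - \frac{1}{n-1-q}\sum_{\ell=1}^m c_{\ell\ell}$. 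The sum of the $(n-1-q)$ smallest of these is $\mu_1+\cdots+\mu_{n-1-q} - \sum_{\ell=1}^m c_{\ell\ell}$, which is nonnegative on $U$ by weak $Z(n-1-q)$. By Lemma \ref{lem:linear algebra}, every sum of $(n-1-q)$ eigenvalues of $(h^-_{jk})$ is nonnegative, which is the hypothesis required by the second form of Proposition \ref{prop:Garding 1}.

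Next I would construct a first-order self-adjoint pseudodifferential operator $R$ whose principal symbol satisfies $\sigma(R) \geq tA$ everywhere and $\sigma(R) = -\xi_{2n-1}$ on a conic neighborhood of $\Cm$ containing the microlocal support of $\hat\vp$. This is possible because on $\Cm$ we have $-\xi_{2n-1} \geq \tfrac{1}{2}|\xi'|$ and $|\xi| \geq tA$, so $-\xi_{2n-1}$ is a positive multiple of $tA$ there (after possibly enlarging $A$); one takes $\sigma(R)$ to be a smoothed maximum of $-\xi_{2n-1}$ and $tA$. Applying the second form of Proposition \ref{prop:Garding 1} with $\kappa = tA$ to $R$ and the matrix $(h^-_{jk})$ yields
\[
\Rre\Big\{\sum_{J,j}(h^-_{jj}R\vp_J,\vp_J)_\lam - \sum_{I,j,k}(h^-_{jk}R\vp_{jI},\vp_{kI})_\lam\Big\} \geq tA\,\Rre\{\cdots\} - C\|\vp\|_\lam^2,
\]
the weight $e^{-\lam}$ contributing only bounded lower-order perturbations absorbed into $O(\|\vp\|_\lam^2)$.

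Finally, I would replace $R$ by $-T$. Since $R+T$ has principal symbol vanishing on a conic neighborhood of $\operatorname{supp}\hat\vp \cap \Cm$, and $\hat\vp$ is supported in $\Cm$ modulo smooth terms, the operator $(R+T)\vp$ is controlled microlocally by $\tzn\tPsol\vp$ up to smoothing errors. Pairing $(R+T)\vp$ against $(h^-_{jk})\vp$ by Cauchy--Schwarz then produces error terms of the form $O(\|\vp\|_\lam^2) + O_t(\|\tzn\tPsol\vp\|_0^2)$, exactly as in the proof of Lemma \ref{lem:T bound lp}. The main obstacle is the careful symbolic construction of $R$, ensuring $\sigma(R) \geq tA$ globally while agreeing with $-\xi_{2n-1}$ precisely where needed, together with the bookkeeping that separates lower-order contributions depending on $t$ from those independent of $t$.
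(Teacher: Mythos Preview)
Your proposal is correct and follows essentially the same route as the paper, which simply remarks that the lemma is established ``by the argument of Lemma \ref{lem:T bound lp}'' with the second form of Proposition \ref{prop:Garding 1} in place of the first and $-T$ on $\Cm$ in place of $T$ on $\Cp$. Your verification that any sum of $(n-1-q)$ eigenvalues of $(h^-_{jk})$ is nonnegative (via weak $Z(n-1-q)$), your construction of $R$ with $\sigma(R)\geq tA$ agreeing with $-\xi_{2n-1}$ near $\Cm$, and your handling of the $(R+T)$ error by $\tzn\tPsol\vp$ are exactly the ingredients the paper has in mind.
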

In a similar fashion to \eqref{eqn:hjk with T for Cp}, we have the equality
\begin{multline}\label{eqn:hjk with T for Cm}
\sum_{J\in \I_q} \sum_{j=m+1}^{n-1} \Rre  \big(c_{jj}T\vp_J,\vp_J \big)_\lam
- \sum_{I\in\I_{q-1}} \sum_{j,k=1}^{n-1}
 \Rre  \big(c_{jk}T\vp_{jI},\vp_{kI} \big)_\lam
 \\
= \Rre \Big\{\sum_{J\in\I_q}\sum_{j=1}^{n-1}  \big(h^-_{jj}T \vp_J, \vp_J\big)_\lam
- \sum_{I\in\I_{q-1}} \sum_{j,k=1}^{n-1}
 \big( h^-_{jk}T \vp_{jI},\vp_{kI}\big)_\lam \Big \}.
\end{multline}

Applying these to the proof of Proposition \ref{prop:local results for Pspl}, we obtain
\begin{prop} \label{prop:local results for Psml}
Let $\vp\in\Dom(\dbarb)\cap\Dom(\dbarbs)$ be a $(0,q)$-form supported in $U$. Assume that
$\lam$ is an $(n-1-q)$-compatible function with positivity constant $B_{\lm}$. If $m>n-1-q$, choose $\lm =t\lam$ and if
$m<n-1-q$, choose $\lm =-t\lam$.
Then there exists a constant $C$ that is independent of $B_{\lm}$ so that
\[
\Qbm(\tz\Psml\vp,\tz \Psml\vp) + C\|\tz\Psml\vp\|_\lm^2 + O_ {t}(\|\tz\tPsol\vp\|_0^2)
\geq t B_{\lm} \|\tz\Psml\vp\|_{\lm}^2.
\]
\end{prop}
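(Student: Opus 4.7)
The plan is to run exactly the same three-step argument used for Proposition \ref{prop:local results for Pspl}, with $(\Cm, n-1-q, \lm)$ substituted for $(\Cp, q, \lp)$ throughout. Start from equation (\ref{eqn:Qblp after parts for Cm}) specialized to $\lam=\lm$ and $\phi=\tz\Psml\vp$; this is the integration-by-parts form of the energy appropriate to forms spectrally supported in $\Cm$, retaining $\|\Lb_j\phi_J\|_\lm^2$ for $j\leq m$ and $\|\Lbam_j\phi_J\|_\lm^2$ for $j>m$. Rewriting the second-derivative coefficients via Proposition \ref{prop:dbar vs. dbar_b on M} as $s_{jk}^- = \Theta^\lm_{jk} - \tfrac12\nu(\lm) c_{jk}$, defining
\[
\Gamma^\lm_{jk} = \Theta^\lm_{jk} - \frac{1}{n-1-q}\delta_{jk}\sum_{\ell=1}^m \Theta^\lm_{\ell\ell},
\]
and rearranging the $T$-terms via (\ref{eqn:hjk with T for Cm}), converts the non-gradient portion of $\Qbm(\phi,\phi)$ into
\[
\Rre\Big\{\sum_{J\in\I_q}\sum_{j=1}^{n-1}\big((\Gamma^\lm_{jj} + h^-_{jj}(-T+\tfrac12\nu(\lm)))\phi_J,\phi_J\big)_\lm - \sum_{I\in\I_{q-1}}\sum_{j,k=1}^{n-1}\big((\Gamma^\lm_{jk} + h^-_{jk}(-T+\tfrac12\nu(\lm)))\phi_{jI},\phi_{kI}\big)_\lm\Big\}
\]
modulo the error term $E(\phi)$.

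Next, Lemma \ref{lem:T bound lm} replaces $-T$ by $tA$ inside these parentheses at the cost of $O(\|\phi\|_\lm^2) + O_t(\|\tz\tPsol\vp\|_0^2)$. Choosing $A\geq\tfrac12|\nu(\lm)|$ ensures $tA+\tfrac12\nu(\lm)\geq 0$, so the $h^-$ contribution is a nonnegative scalar multiple of the bilinear form $\sum_J\sum_j(h^-_{jj}\phi_J,\phi_J)_\lm - \sum_I\sum_{j,k}(h^-_{jk}\phi_{jI},\phi_{kI})_\lm$. By weak $Z(n-1-q)$ the sum of any $(n-1-q)$ eigenvalues of $(h^-_{jk})$ is nonnegative on $U$, so Lemma \ref{lem:linear algebra}, in the form used in the second case of Proposition \ref{prop:Garding 1}, asserts this bilinear form is $\geq 0$; hence the $h^-$ piece can be discarded.

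Finally, the sign convention $\lm = t\lam$ when $m>n-1-q$ and $\lm = -t\lam$ when $m<n-1-q$ is arranged precisely so that the $(n-1-q)$-compatibility of $\lam$ in Definition \ref{defn:compatible_functions} reads: the sum of any $(n-1-q)$ eigenvalues of $(\Gamma^\lm_{jk})$ is at least $tB_{\lm}$ on $U$. A second application of Lemma \ref{lem:linear algebra} then yields
\[
\Rre\Big\{\sum_J\sum_j(\Gamma^\lm_{jj}\phi_J,\phi_J)_\lm - \sum_I\sum_{j,k}(\Gamma^\lm_{jk}\phi_{jI},\phi_{kI})_\lm\Big\} \geq tB_{\lm}\|\phi\|_\lm^2.
\]
Substituting back $\phi=\tz\Psml\vp$ and absorbing $E(\phi)$ into the term $C\|\tz\Psml\vp\|_\lm^2$ completes the argument.

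The main obstacle is bookkeeping rather than substance: relative to the $\Cp$ case, three coordinated sign reversals happen at once. The opposite half of the $\Lb_j$'s must be integrated by parts (so the ``good'' $\|\Lb_j\|^2$ terms live on the complementary range of indices); forms spectrally supported in $\Cm$ are positive against $-T$ rather than $T$, which forces use of the second rather than the first case of Proposition \ref{prop:Garding 1}; and the sign convention on $\lm$ is opposite to that on $\lp$. Checking that these three flips align so that Lemma \ref{lem:linear algebra} applies at level $(n-1-q)$ rather than $q$ is the point of the proof; once this alignment is verified, the computations are a direct transcription of the $\Cp$ argument.
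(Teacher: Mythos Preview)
Your approach is precisely the paper's: it proves Proposition \ref{prop:local results for Psml} by rerunning the argument of Proposition \ref{prop:local results for Pspl} with \eqref{eqn:Qblp after parts for Cm}, \eqref{eqn:hjk with T for Cm}, and Lemma \ref{lem:T bound lm} in place of their $\Cp$-analogues, exactly as you outline.

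There is, however, a sign slip in the bookkeeping. When you pass from \eqref{eqn:Qblp after parts for Cm} to the ``trace minus form'' shape via the identity behind \eqref{eqn:hjk with T for Cm}, the $\Theta$-piece picks up the same overall minus sign that the $T$-piece does; the non-gradient portion of $\Qbm(\phi,\phi)$ is
\[
\Rre\Big\{\sum_{J}\sum_{j}\big((-\Gamma^\lm_{jj} + h^-_{jj}(-T+\tfrac12\nu(\lm)))\phi_J,\phi_J\big)_\lm - \sum_{I}\sum_{j,k}\big((-\Gamma^\lm_{jk} + h^-_{jk}(-T+\tfrac12\nu(\lm)))\phi_{jI},\phi_{kI}\big)_\lm\Big\},
\]
with $-\Gamma^\lm$, not $+\Gamma^\lm$. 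Consequently the eigenvalue condition you must invoke is on $-\Gamma^\lm$. This matters: when $m>n-1-q$ and $\lm=t\lam$, condition (iii) of Definition \ref{defn:compatible_functions} at level $n-1-q$ says $b_{q+1}+\cdots+b_{n-1}-\sum_{\ell\le m}\Theta^\lam_{\ell\ell}\le -B_\lambda$, which forces the smallest $(n-1-q)$-fold eigenvalue sum of $\Gamma^\lm$ to be $\le -tB_\lambda$, directly contradicting your stated claim. With the corrected sign the verification goes through: that same smallest sum for $-\Gamma^\lm$ equals $t\big[\sum_{\ell\le m}\Theta^\lam_{\ell\ell}-(b_{q+1}+\cdots+b_{n-1})\big]\ge tB_\lambda$, and the case $m<n-1-q$, $\lm=-t\lam$ works analogously via condition (ii). (A smaller point: the constraint on $A$ should read $A\ge\tfrac12|\nu(\lam)|$, not $\tfrac12|\nu(\lm)|$, since $\nu(\lm)=\pm t\,\nu(\lam)$ and $A$ must be independent of $t$.) With these corrections your argument is complete and identical to the paper's.
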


We are now ready to prove the basic estimate, Proposition \ref{prop:basic estimate}.
\begin{proof}[Proof  (Proposition \ref{prop:basic estimate})]
From \eqref{eqn:energy form -- dbarb commuted by psi-do}, there exist constants $K$, $K_\pm$ so that
\begin{multline*}
K\Qbpm(\vp,\vp) + K_\pm\sumn \|\tzn\tPsoln\zn\vpn\|_0^2 + K'\|\vp\|_0^2 + O_{\pm}(\|\vp\|_{-1}^2) \\
\geq \sumn \Big[ \Qbp(\tzn\Pspln\zn\vpn,\tzn\Pspln\zn\vpn)
 + \Qbm(\tzn\Psmln\zn\vpn,\tzn\Psmln\zn\vpn) \Big].
\end{multline*}
From Proposition \ref{prop:local results for Pspl} and Proposition \ref{prop:local results for Psml} it follows that
by increasing the size of $K$, $K_\pm$, and $K'$
\[
K\Qbpm(\vp,\vp) + K_\pm\sumn \|\tzn\tPsoln\zn\vpn\|_0^2 +  K'\|\vp\|_0^2 + O_{\pm}(\|\vp\|_{-1}^2)
\geq t B_\pm \|\vp\|_0^2
\]
where $B_\pm = \min\{B_\lm,B_\lp\}$.
\end{proof}

%
\subsection{A Sobolev estimate in the ``elliptic directions"}
For forms whose Fourier transforms are supported up to a smooth term in $\Co$, we have better estimates.  The following
results are in \cite{Nic06,Rai10c}.

\begin{lem}\label{lem: Co supported terms are benign}
Let $\vp$ be a $(0,1)$-form supported in $U_\nu$ for some $\nu$ such that up to a smooth term,
$\hat\vp$ is supported in $\tCon$. There exist positive constants $C>1$ and $C_1>0$ so that
\[
C \Qbpm(\vp,H_{\pm} \vp) + C_1\|\vp\|_0^2 \geq \|\vp\|_1^2.
\]
\end{lem}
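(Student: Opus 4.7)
The key microlocal observation is that on $\tCon$ we have $|\xi_{2n-1}|\le \tfrac{3}{4}|\xi'|$, so $|\xi|^2\le C(1+|\xi'|^2)$ and the symbol of $T$ is dominated there by that of the complex tangential gradient. Consequently, for any $\vp$ whose Fourier transform is (up to a smoothing error) supported in $\tCon$,
\[
\|\vp\|_1^2 \le C\sum_{j=1}^{n-1}\bigl(\|L_j\vp\|_0^2+\|\Lb_j\vp\|_0^2\bigr)+C\|\vp\|_0^2,
\]
and, similarly, $\|T\vp\|_0\le C\sum_j(\|L_j\vp\|_0+\|\Lb_j\vp\|_0)+C\|\vp\|_0$. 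The task therefore reduces to bounding the right-hand side of the first display by $\Qbpm(\vp,H_\pm\vp)+\|\vp\|_0^2$.

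The first step is to replace the weighted form by the unweighted one. Using $(\phi,\vp)_0=\la\phi,H_\pm\vp\rapm$ I would expand
\[
\la\dbarb\vp,\dbarb H_\pm\vp\rapm = (\dbarb\vp,\dbarb\vp)_0+(\dbarb\vp,R\vp)_0,
\]
where $R=H_\pm^{-1}[\dbarb,H_\pm]$ is of order zero because $H_\pm$ is. A similar computation for the $\dbarbspm$-term, together with the decomposition $\dbarbspm=\dbarbs+(\text{order }0)$ supplied by Lemma \ref{lem: dbarbspm computation} and Cauchy--Schwarz, gives
\[
\Qbpm(\vp,H_\pm\vp)\ge \tfrac{1}{2}\Qbo(\vp,\vp)-C\|\vp\|_0^2.
\]

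The second step is to bound $\sum_j(\|L_j\vp\|_0^2+\|\Lb_j\vp\|_0^2)$ by $\Qbo(\vp,\vp)+\|\vp\|_0^2$. The basic identity reads
\[
\Qbo(\vp,\vp)=\sum_{J,j}\|\Lb_j\vp_J\|_0^2+\Rre\sum_{I,j,k}(c_{jk}T\vp_{jI},\vp_{kI})_0+O(\|\vp\|_0\|\nabla_b\vp\|_0+\|\vp\|_0^2).
\]
The microlocal bound $\|T\vp\|_0\le C\|\nabla_b\vp\|_0+C\|\vp\|_0$ absorbs the Levi $T$-term into $\ep\sum_j(\|L_j\vp\|_0^2+\|\Lb_j\vp\|_0^2)+C_\ep\|\vp\|_0^2$. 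Integration by parts through $[L_j,\Lb_j]=c_{jj}T+\cdots$ converts $\|\Lb_j\vp\|_0^2$ into $\|L_j\vp\|_0^2$ modulo the same type of harmless $T$-Levi and cross-gradient errors, which are again absorbed via the microlocal control of $T$. This gives $\sum_j(\|L_j\vp\|_0^2+\|\Lb_j\vp\|_0^2)\le C\Qbo(\vp,\vp)+C\|\vp\|_0^2$, and combining with the elliptic estimate of the first paragraph and the reduction of the second yields the stated inequality.

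The main technical obstacle is the careful bookkeeping of commutator errors among $H_\pm$, the partition-of-unity cutoffs $\zn,\tzn$, the microlocal projections $\Pspln,\Psmln,\Psoln$, and the derivatives $L_j,\Lb_j$. By the symbol estimate (\ref{eqn:inverse order zero t}) and Lemma \ref{lem: neighborhood intersection lemma}, every such commutator is of order zero with bounds independent of $t$, so each contributes only an $L^2$-error that is absorbed into the $C_1\|\vp\|_0^2$ term on the left-hand side of the claim.
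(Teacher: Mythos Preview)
Your outline is essentially the standard ellipticity-on-$\Co$ argument, and it is the same route the paper takes: the paper does not actually give a proof here but simply refers to \cite{Nic06}, whose argument is exactly the three-step scheme you describe (microlocal control of $T$ by the complex tangential gradient on $\tCon$, reduction from $\Qbpm(\vp,H_\pm\vp)$ to $\Qbo(\vp,\vp)$ via $(\phi,\psi)_0=\la\phi,H_\pm\psi\rapm$, and then the basic identity plus integration by parts to recover both $\Lb_j$ and $L_j$ derivatives).

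One small correction to your final paragraph: not every error term has bounds independent of $t$. The difference $\dbarbspm-\dbarbs$ from Lemma~\ref{lem: dbarbspm computation} contains the pieces $\zm^2\tPsplm[\dbarbs,\lp]$ and $\zm^2\tPsmlm[\dbarbs,\lm]$, which are order zero but of size $O(t)$. Similarly, $H_\pm$ involves the weights $e^{\pm t|z|^2}$, so $H_\pm^{-1}[\dbarb,H_\pm]$ has $t$-dependent bounds. This does not damage the argument, since the lemma as stated allows $C_1$ to depend on $t$ (and indeed the constants in the downstream Proposition~\ref{prop:controlling the elliptic terms} and Lemma~\ref{lem:Qbpmp controls norm vp} carry a $\pm$ or $t$ subscript); you should just not claim uniformity in $t$ for those particular errors. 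The commutators coming purely from the $\Psi$-operators and cutoffs do satisfy~\eqref{eqn:inverse order zero t} and are harmless as you say.
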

The proof in \cite{Nic06} also holds  at level $(0,q)$.

We can use 
Lemma \ref{lem: Co supported terms are benign}
to control terms of the form $\|\tzn \Psoln \zn\vpn\|_0^2$.
\begin{prop}\label{prop:controlling the elliptic terms}
For any $\ep>0$, there exists $C_{\ep,\pm}>0$ so that
\[
\|\tzn \Psoln \zn\vpn\|_0^2 \leq \ep \Qbpm(\vpn,\vpn) + C_{\ep,\pm} \|\vpn\|_{-1}^2.
\]
\end{prop}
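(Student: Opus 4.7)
The plan is to apply Lemma \ref{lem: Co supported terms are benign} to the microlocalized form $\phi := \tzn\Psoln\zn\vpn$ and to use the standard small-constant Sobolev interpolation to convert the resulting $\|\phi\|_1^2$-bound into the desired $\|\phi\|_0^2$-estimate. Because $\Psoln$ has principal symbol supported in $\Con\subset\tCon$, $\phi$ has Fourier transform supported in $\tCon$ up to a smoothing term, so Lemma \ref{lem: Co supported terms are benign} yields
\[
\|\phi\|_1^2 \leq C\,\Qbpm(\phi, H_\pm\phi) + C_1\|\phi\|_0^2.
\]
Combining this with the interpolation $\|\phi\|_0^2 \leq \delta\|\phi\|_1^2 + C_\delta\|\phi\|_{-1}^2$ (for any $\delta>0$), choosing $\delta C_1 \leq 1/2$ so that $\|\phi\|_0^2$ is absorbed, and using that $\tzn\Psoln\zn$ is bounded on $H^{-1}$, one arrives at
\[
\|\phi\|_0^2 \lesssim \delta\,\Qbpm(\phi, H_\pm\phi) + C_\delta\|\vpn\|_{-1}^2.
\]

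The main task and the principal obstacle is then to bound $\Qbpm(\phi, H_\pm\phi)$ by $\Qbpm(\vpn,\vpn) + C_\pm\|\vpn\|_{-1}^2$. First I would use the identity $\la X, H_\pm Y\rapm = (X,Y)_0$ together with Cauchy--Schwarz in $\la\cdot,\cdot\rapm$ and the boundedness of $H_\pm$ and of its commutators with $\dbarb$ and $\dbarbspm$ (these commutators are bounded pseudodifferential operators of order zero because $H_\pm$ is built from smooth cutoffs, zero-order $\psi$DOs, and multiplication by the weights $e^{\lp}, e^{\lm}$) to reduce matters to controlling $\Qbpm(\phi,\phi) + \|\phi\|_0^2$. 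Next, writing $\dbarb\phi = \tzn\Psoln\zn\,\dbarb\vpn + [\dbarb,\tzn\Psoln\zn]\vpn$ and likewise for $\dbarbspm$, the main pieces are controlled by $\Qbpm(\vpn,\vpn)$ via the boundedness of $\tzn\Psoln\zn$ and the norm equivalence \eqref{eqn:norm equivalence}. The commutator $[\dbarb,\tzn\Psoln\zn]$ is a zero-order $\psi$DO whose symbol involves $\partial_\xi\psol$; I would split it into a high-frequency piece $(|\xi|\geq tA)$, whose symbol is supported on $\partial\Con$ where the dominating cutoffs $\tpspl$ or $\tpsml$ are identically $1$---so that this piece may be rewritten as a composition with $\tPspln$ or $\tPsmln$ modulo smoothing and then absorbed into $\Qbpm(\vpn,\vpn)$ via the microlocal norm $\norm\cdot\normpm$---and a low-frequency piece $(|\xi|<tA)$, whose symbol carries an extra $(tA)^{-1}$-factor and contributes only $O(\|\vpn\|_{-1}^2)$.

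The hardest step is this last microlocal decomposition of the commutator: the decomposition must match exactly against the cutoffs already present in $\norm\cdot\normpm$ so that no uncontrollable $\|\vpn\|_0^2$-type term is created. This is the analogue of the commutator bookkeeping carried out in \cite{Nic06,Rai10c}, and relies crucially on \eqref{eqn:inverse order zero t}. Once the bound $\Qbpm(\phi, H_\pm\phi)\leq C_\pm\Qbpm(\vpn,\vpn)+C_\pm\|\vpn\|_{-1}^2$ is in hand, substituting it back into the interpolation estimate above and choosing $\delta$ small in terms of $\ep$ yields the claimed inequality with an appropriate constant $C_{\ep,\pm}$.
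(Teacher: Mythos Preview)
The paper does not give its own proof here; it simply cites \cite{Rai10c}. Your overall strategy---apply Lemma~\ref{lem: Co supported terms are benign} to $\phi=\tzn\Psoln\zn\vpn$, interpolate, and then pass from $\Qbpm(\phi,H_\pm\phi)$ back to $\Qbpm(\vpn,\vpn)$ by commuting $\dbarb$ and $\dbarbspm$ past $\tzn\Psoln\zn$---is the standard one and matches what is done in the cited references.

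There is, however, a real gap in your commutator step. The operator $[\dbarb,\tzn\Psoln\zn]$ is of order zero (cf.\ the remark following Lemma~\ref{lem:energy form estimate for dbarbspm}), so it produces a term controlled by $\|\vpn\|_0$, not by $\Qbpm(\vpn,\vpn)^{1/2}$. Your claim that the high-frequency piece, once written as a composition with $\tPspln$ or $\tPsmln$, can be ``absorbed into $\Qbpm(\vpn,\vpn)$ via the microlocal norm $\norm\cdot\normpm$'' does not hold: $\Qbpm$ controls $\dbarb\vpn$ and $\dbarbspm\vpn$, whereas the commutator lands on $\vpn$ itself, and rewriting it with $\tPspln$ only bounds it by $\|\vpn\|_0$. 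Consequently your argument as written yields
\[
\|\tzn\Psoln\zn\vpn\|_0^2 \leq \ep\,\Qbpm(\vpn,\vpn) + \ep\,C_\pm\|\vpn\|_0^2 + C_{\ep,\pm}\|\vpn\|_{-1}^2,
\]
with an extra $\ep\,C_\pm\|\vpn\|_0^2$ that cannot be absorbed into the left-hand side.

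The clean fix is to shift Lemma~\ref{lem: Co supported terms are benign} down by one Sobolev order: apply it to $\Lambda^{-1}\phi$ (still microlocalized in $\tCon$) to obtain
\[
\|\phi\|_0^2 \leq C_\pm\big(\|\dbarb\phi\|_{-1}^2 + \|\dbarbspm\phi\|_{-1}^2\big) + C_\pm\|\phi\|_{-1}^2.
\]
Now the order-zero commutators $[\dbarb,\tzn\Psoln\zn]\vpn$ contribute only $\|\vpn\|_{-1}^2$, and for the main terms one interpolates $\|\dbarb\vpn\|_{-1}^2 \leq \ep\|\dbarb\vpn\|_0^2 + C_\ep\|\dbarb\vpn\|_{-2}^2 \leq \ep\,C_\pm\Qbpm(\vpn,\vpn) + C_\ep\|\vpn\|_{-1}^2$, which gives exactly the stated inequality. (Alternatively, the weaker estimate you obtain is in fact adequate for every use of the proposition in this paper, since in Proposition~\ref{prop:basic estimate} and Lemma~\ref{lem:Qbpmp controls norm vp} the term $\norm\vp\normpm^2$ already appears on the left with a large coefficient $tB_\pm$.)
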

See \cite{Rai10c} for a proof of this proposition.

%
%
\section{Regularity Theory for $\dbarb$}
\label{sec:regularity theory}

%
%
\subsection{Closed range for $\Boxbpm$.}

For $1\leq q\leq n-2$, let
\begin{align*}
\Hpm^q &= \{\vp\in\Dom(\dbarb)\cap\Dom(\dbarbs) : \dbarb\vp=0, \dbarbspm\vp=0\} \\
& = \{\vp\in\Dom(\dbarb)\cap\Dom(\dbarbs):\Qbpmp=0\}
\end{align*}
be the space of $\pm$-harmonic $(0,q)$-forms.

\begin{lem}\label{lem:Qbpmp controls norm vp}
Let $M^{2n-1}$ be a smooth, embedded CR-manifold of hypersurface type that admits a $q$-compatible function $\lp$ and an $(n-1-q)$-compatible function
$\lm$. If
$t>0$ is suitably large and $1\leq q \leq n-2$, then
\begin{enumerate}\renewcommand{\labelenumi}{(\roman{enumi})}
\item $\Hpm^q$ is finite dimensional;
\item There exists $C$ that does not depend on $\lp$ and $\lm$ so that
for all $(0,q)$-forms $\vp\in\Dom(\dbarb)\cap\Dom(\dbarbs)$ satisfying $\vp\perp\Hpm^q$ (with respect to  $\la\cdot,\cdot\rapm$) we have
\begin{equation}\label{eqn:Qbpmp controls norm}
\norm\vp\normpm^2 \leq C\Qbpmp.
\end{equation}
\end{enumerate}
\end{lem}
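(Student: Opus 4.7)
The plan is to first establish a combined estimate of the form
\[
\norm\vp\normpm^2 \le C\,\Qbpmp + C'\|\vp\|_{-1}^2
\]
for all $\vp \in \Dom(\dbarb)\cap\Dom(\dbarbs)$ (with $t$ large), and then to remove the lower-order term on the orthogonal complement of $\Hpm^q$ by a standard Rellich-plus-contradiction argument. To obtain the combined estimate, apply Proposition \ref{prop:basic estimate}:
\[
tB_\pm \norm\vp\normpm^2 \le K\Qbpmp + K\norm\vp\normpm^2 + K_\pm \sumn\sum_{J\in\I_q}\|\tzn\tPsoln\zn\vpn_J\|_0^2 + K_\pm'\|\vp\|_{-1}^2,
\]
and then apply Proposition \ref{prop:controlling the elliptic terms} (with a sufficiently small $\ep$) to each $\|\tzn\tPsoln\zn\vpn_J\|_0^2$ so that the resulting $\ep\Qbpmp$ contribution can be absorbed into $K\Qbpmp$. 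Choosing $t$ large enough that $tB_\pm - K$ dominates and dividing through yields the desired combined estimate, with constants independent of $\lp,\lm$ (since the constants $K, K'$ in the basic estimate and \eqref{eqn:energy form -- dbarb commuted by psi-do} do not depend on the weights).

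For (i), observe that if $\vp \in \Hpm^q$, then $\Qbpmp = 0$, so the combined estimate collapses to $\norm\vp\normpm^2 \le C'\|\vp\|_{-1}^2$. Since $M$ is compact, the inclusion $L^2(M) \hookrightarrow H^{-1}(M)$ is compact by the Rellich lemma, and via the norm equivalence \eqref{eqn:norm equivalence} the unit ball of $\Hpm^q$ (in $\norm\cdot\normpm$) is bounded in $L^2$ and thus precompact in $H^{-1}$. The estimate above then upgrades precompactness in $H^{-1}$ to precompactness in $\norm\cdot\normpm$, so the unit ball of $\Hpm^q$ is precompact in its own norm. By the Riesz lemma, $\Hpm^q$ is finite dimensional.

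For (ii), argue by contradiction. Suppose no uniform $C$ exists; then there is a sequence $\vp_j \in \Dom(\dbarb)\cap\Dom(\dbarbs)$ with $\vp_j \perp \Hpm^q$, $\norm{\vp_j}\normpm = 1$, and $\Qbpm(\vp_j,\vp_j) \to 0$. From the combined estimate $1 \le C\,\Qbpm(\vp_j,\vp_j) + C'\|\vp_j\|_{-1}^2$, so $\|\vp_j\|_{-1}$ is bounded below. Since $\|\vp_j\|_0$ is bounded (by the norm equivalence), Rellich gives a subsequence, still denoted $\vp_j$, converging in $H^{-1}$ to some $\vp_\infty$ with $\|\vp_\infty\|_{-1} > 0$. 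Applying the combined estimate to the differences $\vp_j - \vp_k$ shows $(\vp_j)$ is Cauchy in $\norm\cdot\normpm$, hence converges in $L^2$ to $\vp_\infty$. Because $\dbarb\vp_j \to 0$ and $\dbarbspm\vp_j \to 0$ (as $\Qbpm(\vp_j,\vp_j) \to 0$) and both operators are closed, $\vp_\infty \in \Dom(\dbarb)\cap\Dom(\dbarbs)$ with $\dbarb\vp_\infty = \dbarbspm\vp_\infty = 0$; that is, $\vp_\infty \in \Hpm^q$. But $\vp_\infty$ is also the $\la\cdot,\cdot\rapm$-limit of forms orthogonal to $\Hpm^q$, so $\vp_\infty \perp \Hpm^q$, forcing $\vp_\infty = 0$, a contradiction.

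The main obstacle is bookkeeping rather than conceptual: one must verify that the constants produced by combining the basic estimate with Proposition \ref{prop:controlling the elliptic terms} are genuinely independent of $\lp$ and $\lm$ (only the $\ep$-constant in the elliptic estimate and the error constants depend on $t$, but not the positivity-giving constant $tB_\pm$), and that $t$ may be chosen large enough to absorb the $K\norm\vp\normpm^2$ term uniformly. Provided one tracks these dependencies through \eqref{eqn:energy form -- dbarb commuted by psi-do}, the Rellich-based contradiction argument proceeds as usual.
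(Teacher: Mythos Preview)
Your proof is correct and follows essentially the same route as the paper's: combine Proposition~\ref{prop:basic estimate} with Proposition~\ref{prop:controlling the elliptic terms} to obtain the intermediate estimate $\norm\vp\normpm^2 \le C\,\Qbpmp + C'\|\vp\|_{-1}^2$, deduce finite dimensionality of $\Hpm^q$ from Rellich compactness, and then run a Rellich-plus-contradiction argument on a normalized sequence to remove the $\|\vp\|_{-1}^2$ term on $(\Hpm^q)^\perp$. The only organizational difference is that you isolate the combined estimate explicitly at the outset, whereas the paper derives it in passing; your closing remarks on tracking which constants depend on $\lp,\lm$ are also slightly more explicit than the paper's treatment.
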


\begin{proof}
For $\vp\in\Hpm$, we can use Proposition \ref{prop:basic estimate} with $t$ suitably large (to absorb terms) so that
\[
tB_\pm\norm\vp\normpm^2 \leq C_{\pm}\big( \sumn \|\tzn\Psoln\zm\vpn\|_0^2 + \|\vp\|_{-1}^2\big).
\]
Also, by Proposition \ref{prop:controlling the elliptic terms},
\[
\sumn \|\tzn\Psoln\zm\vpn\|_0^2 \leq C_{\pm} \|\vp\|_{-1}^2.
\]
since $\Qbpm(\vp,\vp)=0$.
The unit ball in $\Hpm\cap L^2(M)$ is compact, and hence finite dimensional.

Assume that \eqref{eqn:Qbpmp controls norm} fails. Then there exists $\vp_k \perp\Hpm$
with $\norm\vp_k\normpm =1$ so that
\begin{equation}\label{eqn:k Qbpm inequality}
\norm\vp_k\normpm^2 \geq k\Qbpm(\vp_k,\vp_k).
\end{equation}
For $k$ suitably large, we can use Proposition \ref{prop:basic estimate} and the above argument to absorb $\Qbpm(\vp_k,\vp_k)$ by
$B_\pm \norm \vp_k\normpm$ to get:
\begin{equation}\label{eqn:norm controlled by -1 norm}
\norm\vp_k\normpm^2 \leq C_{\pm}\|\vp_k\|_{-1}^2.
\end{equation}
Since $L^2(M)$ is compact in $H^{-1}(M)$, there exists a subsequence $\vp_{k_j}$ that converges in $H^{-1}(M)$. However,
\eqref{eqn:norm controlled by -1 norm} forces $\vp_{k_j}$ to converge in $L^2(M)$ as well. Although the norm
$(\Qbpm(\cdot,\cdot) + \norm\cdot\normpm^2)^{1/2}$ dominates the $L^2(M)$-norm, \eqref{eqn:k Qbpm inequality} applied to
$\vp_{j_k}$ shows that  $\vp_{j_k}$
converges in the $(\Qbpm(\cdot,\cdot) + \norm\cdot\normpm^2)^{1/2}$ norm as well. The limit $\vp$
satisfies $\norm\vp\normpm=1$ and $\vp\perp\Hpm$. However, a consequence of \eqref{eqn:k Qbpm inequality} is that $\vp\in\Hpm$. This is a contradiction
and \eqref{eqn:Qbpmp controls norm} holds.
\end{proof}

Let
\[
\Hpmp^q = \{\vp\in L^2_{0,q}(M) : \la \vp,\phi\rapm =0,\text{ for all }\phi\in\Hpm^q\}.
\]
On $\Hpmp^q$, define
\[
\Boxbpm = \dbarb\dbarbspm + \dbarbspm\dbarb.
\]
Since $\dbarbspm = H_{\pm}\dbarbs + [\dbarbs,H_{\pm}]$, $\Dom(\dbarbspm) = \Dom(\dbarbs)$. This causes
\[
\Dom(\Boxbpm) = \{\vp\in L^2_{0,q}(M) : \vp\in\Dom(\dbarb)\cap\Dom(\dbarbs),\ \dbarb\vp\in\Dom(\dbarbs),\text{ and }
\dbarbs\vp\in\Dom(\dbarb)\}.
\]

%
%
\section{Proof of Theorem \ref{thm:main theorem for weighted spaces}.}\label{sec:main theorem weighted}
\subsection{Closed range in $L^2$.}
From Remark \ref{rem:Z(q) positive explanation}, we know that $|z|^2$ is a $q$-compatible functions with a positivity constant
of $1$. Thus, for suitably large $t$, the space of harmonic $(0,q)$-forms $\H_t^q := \Hpm^q$ is finite dimensional. Moreover,
if we use $\la \cdot, \cdot \ra_t$ for $\la \cdot, \cdot \rapm$ and $Q_{b,t}$ for $\Qbpm$,  then for
$\vp\perp\H_t^q$ (with respect to  $\la\cdot,\cdot\ra_t$)
\begin{equation}\label{eqn:Qbt controls norm}
\norm\vp\norm_t^2 \leq C Q_{b,t}(\vp,\vp).
\end{equation}
From H\"ormander \cite{Hor65}, Theorem 1.1.2, \eqref{eqn:Qbt controls norm} is equivalent to the closed range of
$\dbarb:L^2_{0,q}(M)\to L^2_{0,q+1}(M)$ and $\dbarbst : L^2_{0,q}(M)\to L^2_{0,q-1}(M)$ where both operators are defined with respect to
$\la\cdot,\cdot\ra_t$. By H\"ormander \cite{Hor65}, Theorem 1.1.1, this means that $\dbarbst : L^2_{0,q+1}(M)\to L^2_{0,q}(M)$ and
$\dbarb:L^2_{0,q-1}(M)\to L^2_{0,q}(M)$ also have closed range. Thus, the Kohn Laplacian $\Box_{b,t}$ on $(0,q)$-forms
also has closed range and $G_{q,t}$ exists and is a continuous operator on $L^2_{0,q}(M)$.

%
%
\subsection{Hodge theory and the canonical solutions operators.}
We now prove the existence of a  Hodge decomposition and the existence of the canonical solution operators. Unlike the standard computations for the
$\dbar$-Neumann operators and complex Green operators in the pseudoconvex case, we only have the existence of
the complex Green operator $G_{q,t}$ at a fixed level $q$ and not for all $1\leq q \leq n-1$. (hence, we cannot
commute $G_{q,t}$ with either $\dbarb$ or $\dbarbst$). If $H^q_t$ is the projection of $L^2_{0,q}(M)$ onto  $\H^q_t = \Null(\dbarb)\cap\Null(\dbarbst) =
\{ \vp\in L^2_{0,q}(M)\cap \Dom(\dbarb)\cap\Dom(\dbarbst) : \Qbt(\vp,\vp)=0\}$, then we know
\[
\vp = \dbarb\dbarbst G_{q,t}\vp + \dbarbst\dbarb G_{q,t}\vp + H^q_t \vp.
\]
We now find the canonical solution operators. Let $\vp$ be a $\dbarb$-closed $(0,q)$-form that  is orthogonal to $\H^q_t$. Then $H^q_t \vp=0$, so
\[
\vp = \dbarb\dbarbst G_{q,t}\vp + \dbarbst\dbarb G_{q,t}\vp.
\]
We claim that $\dbarbst\dbarb G_{q,t}\vp=0$. Following \cite{Nic06}, we note that
\[
0 = \dbarb\vp = \dbarb \dbarbst\dbarb G_{q,t}\vp,
\]
so
\[
0 = \la \dbarb \dbarbst\dbarb G_{q,t}\vp, \dbarb G_{q,t}\vp \ra_t = \norm \dbarbst\dbarb G_{q,t}\vp \norm_t^2.
\]
Thus, $\dbarbst\dbarb G_{q,t}\vp=0$ and the canonical solution operator to $\dbarb$ is given by $\dbarbst G_{q,t}$. A similar argument shows that
the canonical solution operator for $\dbarbst$ is given by $\dbarb G_{q,t}$.

In this paragraph, we will assume that all forms are perpendicular to $\H^q_t$.
For $\vp\in\Dom(\Box_{b,t})$, it follows that
\[
\vp = G_{q,t}\Box_{b,t} \vp = \Box_{b,t} G_{q,t} \vp.
\]
We will show that
\begin{equation}\label{eqn:commuting dbarb dbarbst by G_q}
\dbarb\dbarbst G_{q,t} = G_{q,t}\dbarb\dbarbst\qquad \text{and} \qquad\dbarbst\dbarb G_{q,t} = G_{q,t}\dbarbst\dbarb.
\end{equation}
Observe that
\begin{align}\label{eqn:dbarb a=0 implies dbarb dbarst G a = G dbarb dbarst a}
\dbarb\alpha &= 0 \Longrightarrow \alpha = \dbarb\dbarbst G_{q,t} \alpha = G_{q,t}\dbarb\dbarbst \alpha \\
\intertext{and}
\dbarbst\beta &= 0 \Longrightarrow \beta =\dbarbst \dbarb G_{q,t} \beta = G_{q,t}\dbarbst \dbarb \beta.
\label{eqn:dbarbst a=0 implies dbarst dbarb G a = G dbarst dbarb a}
\end{align}
Next, we claim that
\begin{equation}\label{eqn:dbarb f=0 implied dbarb G_q f=0}
\dbarb\vp =0 \Longrightarrow \dbarb G_q \vp =0
\end{equation}
and
\begin{equation}\label{eqn:dbarstb f=0 implied dbarbst G_q f=0}
\dbarbst\vp =0 \Longrightarrow \dbarbst G_q \vp =0.
\end{equation}
Indeed, we have that $\vp\perp \H^q_t$, so $\vp = \dbarb\dbarbst G_{q,t}\vp + \dbarbst\dbarb G_{q,t} \vp$. Since
$\Ran \dbarbst \perp \Null\dbarb$, $\dbarb\vp=0$ implies that $\dbarbst\dbarb G_{q,t} \vp=0$.
Since $\Ran(\dbarb)\perp \Null(\dbarbst)$, $\dbarbst\dbarb G_{q,t}\vp=0$ implies $\dbarb G_{q,t}\vp=0$, as desired. A similar argument
shows (\ref{eqn:dbarstb f=0 implied dbarbst G_q f=0}). To show \eqref{eqn:commuting dbarb dbarbst by G_q}, observe that
we can write $\vp = \alpha + \beta$ where $\dbarb\alpha=0$ and $\dbarbst\beta=0$. Thus, by
(\ref{eqn:dbarb a=0 implies dbarb dbarst G a = G dbarb dbarst a}) and (\ref{eqn:dbarstb f=0 implied dbarbst G_q f=0}),
\[
\dbarb\dbarbst G_{q,t} \vp = \dbarb\dbarbst G_{q,t} (\alpha+\beta) = \dbarb\dbarbst G_{q,t} \alpha
= G_{q,t} \dbarb\dbarbst \alpha= G_{q,t} \dbarb\dbarbst \vp.
\]
A similar argument with \eqref{eqn:dbarbst a=0 implies dbarst dbarb G a = G dbarst dbarb a}
and \eqref{eqn:dbarb f=0 implied dbarb G_q f=0} proves that
$\dbarbst\dbarb G_{q,t} \vp  = G_{q,t} \dbarbst\dbarb  \vp$, finishing the proof of (\ref {eqn:commuting dbarb dbarbst by G_q}).

%
%
\subsection{Closed range of $\dbarb:H^s_{0,q}(M)\to H^s_{0,q+1}(M)$ and $\dbarbst:H^s_{0,q}(M)\to H^s_{0,q-1}(M)$.}
We start with an argument to show closed range of $\dbarb:H^s_{0,q}(M)\to H^s_{0,q+1}(M)$ and $\dbarbst:H^s_{0,q}(M)\to H^s_{0,q-1}(M)$. Combining
Proposition \ref{prop:basic estimate} and Lemma \ref{lem: Co supported terms are benign}, if $t$ is sufficiently large, then
\begin{align*}
&\norm \Lambda^s \vp \norm_t^2 \leq \frac Ct \big( \norm \dbarb\Lambda^s \vp \norm_t^2 + \norm \dbarbst \Lambda^s \vp \norm_t^2 \big)
+ C_t \| u\|_{s-1}^2 \\
&\leq \frac Ct \big( \norm \Lambda^s\dbarb \vp \norm_t^2 + \norm \Lambda^s\dbarbst \vp \norm_t^2
+ \norm [\dbarb,\Lambda^s] \vp \norm_t^2 + \norm [\dbarbst, \Lambda^s] \vp \norm_t^2 \big)
+ C_t \| \vp\|_{s-1}^2 .
\end{align*}
As a consequence of  Lemma \ref{lem: dbarbspm computation}, $[\dbarbst, \Lambda^s] = P_{s} + t P_{s-1}$ where $P_s$ and $P_{s-1}$ are
pseudodifferential operators of order $s$ and $s-1$, respectively. Additionally, $[\dbarb,\Lambda^s]$ is a pseudodifferential operator of order $s$.
Consequently,
\[
\norm \Lambda^s \vp \norm_t^2 \leq \frac Ct \big( \norm \Lambda^s\dbarb \vp \norm_t^2 + \norm \Lambda^s\dbarbst \vp \norm_t^2
+ \norm \Lambda^s \vp \norm_t^2 \big) + C_t \| \vp\|_{s-1}^2.
\]
Choosing $t$ large enough and $\vp\in H^s_{0,q}(M)$ allows us to absorb terms to prove
\begin{multline*}
\|\vp\|_s^2  = \|\Lambda^s \vp\|_{0}^2 \leq C_t \norm \Lambda^s \vp \norm_t^2
\leq C_t \big( \norm \Lambda^s\dbarb \vp \norm_t^2 + \norm \Lambda^s\dbarbst \vp \norm_t^2  +  \| \vp\|_{s-1}^2  \big)  \\
\leq C_t \big( \|\dbarb \vp\|_s^2 + \|\dbarbst \vp\|_s^2  + \| \vp\|_{s-1}^2  \big).
\end{multline*}
Thus,  $\dbarb:H^s_{0,q}(M)\to H^s_{0,q+1}(M)$ and $\dbarbst:H^s_{0,q}(M)\to H^s_{0,q-1}(M)$ have closed range.

%
%
\subsection{Continuity of the complex Green's operator in $H^s_{0,q}(M)$.}
We now turn to the harder problem of showing continuity of the complex Green operator $\Gqdt$ in $H^s_{0,q}(M)$, $s>0$.
We use an elliptic regularization argument.
Let $\Qbt^\delta(\cdot,\cdot)$
be the quadratic form on $H^1_{0,q}(M)$ defined by
\[
\Qbt^\delta(u,v) = \Qbt(u,v) + \delta Q_{d_b}(u,v)
\]
where $Q_{d_b}$ is the hermitian inner product associated to the de Rham exterior derivative $d_b$, i.e.,
$Q_{d_b}(u,v) = \la d_bu,d_bv \ra_t + \la d_b^*u,d_b^*v\ra_t$. The inner product $Q_{d_b}$ has form domain
$H^1_{0,q}(M)$. Consequently, $\Qbt^\delta$ gives rise to a unique, self-adjoint, elliptic operator $\Box_{b,t}^\delta$ with
inverse $\Gqdt$.

From  Proposition \ref{prop:basic estimate} and Lemma \ref{lem: Co supported terms are benign}, if $t$ is large enough, then for
$\vp\in\Dom(\dbarb)\cap\Dom(\dbarbst)$, we have the estimate
\begin{equation}\label{eqn:good t basic estimate}
\norm \vp \norm_t^2 \leq \frac{K}{t} \Qbt(\vp,\vp) + C_t \|\vp\|_{-1}^2.
\end{equation}
Now let $\vp\in H^s_{0,q}(\Omega)$. Since $\Box_{b,t}^\delta$ is elliptic, $\Gqdt\vp\in H^{s+2}_{0,q}(M)$. Then
\begin{equation}\label{eqn:Gqdelta s to L^2}
\| \Gqdt\vp\|_s^2 = \| \Lambda^s \Gqdt \vp \|_0^2 \leq C_t \norm \Lambda^s \Gqdt \vp \norm_t^2.
\end{equation}
We now concentrate on finding a bound  for $\norm \Lambda^s \Gqdt \vp \norm_t^2$ that is independent of $\delta$. By (\ref{eqn:good t basic estimate}),
\begin{equation}\label{eqn:Lambda^s G_q bound with Q}
\norm \Lambda^s \Gqdt \vp \norm_t^2 \leq \frac Kt \Qbt(\Lambda^s \Gqdt \vp,\Lambda^s \Gqdt \vp) + C_{t,s} \|\Gqdt\vp\|_{s-1}^2.
\end{equation}
Observe that if $(\Lambda^s)^{*,t}$ is the adjoint of $\Lambda^s$ under the inner product $\la\cdot,\cdot\ra_t$, then
\[
\la\Lambda^s u, v\ra_t = (u,\Lambda^s H_t^{-1} v)_0 = \la u,H_t \Lambda^s H_t^{-1} v\ra_t
= \la u, (\Lambda^s + [H_t,\Lambda^s]H_t^{-1})v \ra_t
\]
implies that $(\Lambda^s)^{*,t} = \Lambda^s + [H_t,\Lambda^s]H_t^{-1}$. Therefore,
it is a standard consequence of Lemma 3.1 in \cite{KoNi65}
(or Lemma 2.4.2 in \cite{FoKo72}) that
\begin{align}
\Qbt(\Lambda^s \Gqdt \vp,\Lambda^s \Gqdt \vp) \leq \Qbt^\delta (\Lambda^s \Gqdt \vp,\Lambda^s \Gqdt \vp)
&\leq |\la \Lambda^s\vp, \Lambda^s \Gqdt \vp \ra_t| + C \| \Gqdt\vp \|_s^2 + C_{t,s} \| \Gqdt \vp\|_{s-1}^2\nn\\
&\leq  \norm \Lambda^s \vp \norm_t \norm \Lambda^s \Gqdt \vp \norm_t + \| \Gqdt \vp \|_{s-1}^2 \nn \\
&\leq K_t  \|\vp\|_s^2 + C \norm \Lambda^s \Gqdt \vp \norm_t^2 + C_{t,s} \| \Gqdt \vp\|_{s-1}^2
\label{eqn:good bound for Qbt(lam^s G vp)}
\end{align}
where $C>0$ does not depend on $\delta$ or $t$.

Plugging (\ref{eqn:good bound for Qbt(lam^s G vp)}) into
(\ref{eqn:Lambda^s G_q bound with Q}), we see that
\[
\norm \Lambda^s \Gqdt \vp \norm_t^2 \leq \frac Kt \Big(  K_t  \|\vp\|_s^2 + C \norm \Lambda^s \Gqdt \vp \norm_t^2  \Big)
+  C_{t,s} \|\Gqdt\vp\|_{s-1}^2.
\]
If $t$ is sufficiently large, then it follows that
\begin{equation}\label{eqn:good bound for lambda^s Gqdt vp}
\norm \Lambda^s \Gqdt \vp \norm_t^2 \leq  K_t  \|\vp\|_s^2 +  C_{t,s} \|\Gqdt\vp\|_{s-1}^2
\end{equation}
since $\norm \Lambda^s \Gqdt \vp \norm_t^2<\infty$   (recall that $\Gqdt \vp \in H^{s+2}_{0,q}(M)$).
Plugging (\ref{eqn:good bound for lambda^s Gqdt vp}) into (\ref{eqn:Gqdelta s to L^2}), we have the bound
\begin{equation}\label{eqn:Gqdt bound, delta gone}
\| \Gqdt\vp\|_s^2 \leq K_t  \|\vp\|_s^2 +  C_{t,s} \|\Gqdt\vp\|_{s-1}^2.
\end{equation}
We now turn to letting $\delta\to 0$. Observe that $K_t$ and $C_{t.s}$ are independent of $\delta$. We have shown that if
$\vp\in H^s_{0,q}(M)$, then $\{ \Gqdt \vp: 0 <\delta <1\}$ is bounded in $H^s_{0,q}(M)$. Thus, there exists a sequence $\delta_k\to 0$
and $\tilde u \in H^s_{0,q}(M)$ so that $G_{q,t}^{\delta_k}u \to \tilde u$ weakly in $H^s_{0,q}(M)$. Consequently, if $v\in H^{s+2}_{0,q}(M)$,
then
\[
\lim_{k\to\infty} \Qbt^{\delta_k} (G_{q,t}^{\delta_k} u, v) = \Qbt(\tilde u,v).
\]
However,
\[
\Qbt^{\delta_k}(G_{q,t}^{\delta_k} u,v) = (u,v)= \Qbt(G_{q,t} u,v),
\]
so $G_{q,t}u = \tilde u$ and (\ref{eqn:Gqdt bound, delta gone}) is satisfied with $\delta=0$. Thus, $G_{q,t}$ is a continuous operator on
$H^s_{0,q}(M)$.

%
%
\subsection{Continuity of the canonical solution operators in $H^s_{0,q}(M)$.}
Continuity of $\dbarb G_{q,t}$ and $\dbarbst G_{q,t}$ will follow from the continuity of $G_{q,t}$. Unfortunately, we cannot
apply Proposition \ref{prop:basic estimate} to either $\dbarb G_{q,t}\vp$ or $\dbarbst G_{q,t}\vp$ because neither are $(0,q)$-forms. Instead, we estimate
directly:
\begin{align*}
\|\dbarb G_{q,t}\vp\|_s^2 &+ \|\dbarbst G_{q,t}\vp\|_s^2
\leq C_t ( \norm \Lambda^s \dbarb G_{q,t}\vp\norm_t^2 + \norm \Lambda^s \dbarbst G_{q,t}\vp\norm_t^2 ) \\
&= C_t\Big( \la \Lambda^s \vp, \Lambda^s G_{q,t}\vp \ra_t + \la  \Lambda^s \dbarb G_{q,t}\vp,  [\Lambda^s, \dbarb] G_{q,t}\vp \ra_t
+ \la [\dbarbst, \Lambda^s] \dbarb G_{q,t}\vp, \Lambda^s G_{q,t}\vp \ra_t \\
 &+ \la  \Lambda^s \dbarbst G_{q,t}\vp,  [\Lambda^s, \dbarbst] G_{q,t}\vp \ra_t
+ \la [\dbarb, \Lambda^s] \dbarbst G_{q,t}\vp, \Lambda^s G_{q,t}\vp \ra_t \Big) \\
&\leq C_{t,s} (\| \vp \|_s^2 + \| G_{q,t}\vp \|_s^2) \leq C_{t,s} \|\vp\|_s^2.
\end{align*}

%
%
\subsection{The Szeg\"o projection $S_{q,t}$}
The Szeg\"o projection $S_{q,t}$
is the projection of $L^2_{0,q}(M)$ onto $\ker\dbarb$. We claim that
\[
S_{q,t} = I - \dbarbst\dbarb G_{q,t} = I - G_{q,t} \dbarbst\dbarb.
\]
The second equality follows from (\ref{eqn:commuting dbarb dbarbst by G_q}). Observe that if $\vp\in\Null(\dbarb)$, then
$( I - G_{q,t} \dbarbst\dbarb)\vp = \vp$, as desired. If $\vp\perp \Null(\dbarb)$, then $\vp \perp \H^q_t$, so
$\vp = \dbarbst\dbarb G_{q,t}\vp + \dbarb\dbarbst G_{q,t}\vp$. We claim that $\vp = \dbarbst\dbarb G_{q,t}\vp$. Let
$u = \dbarbst\dbarb G_{q,t}\vp$. Then $u$ is the canonical solution to $\dbarb u = \dbarb\vp$, so $\dbarb(\vp-u)=0$. However,
$\vp \perp\Null(\dbarb)$, so $u =\vp$, and $0 = \vp-u = (I - \dbarbst\dbarb G_{q,t})\vp$, as desired.

\begin{prop}\label{prop:the Szego kernel is continuous on H^s}
Let $M$ be as in Theorem \ref{thm:main theorem for weighted spaces}. If $t\geq T_s$, then the Szeg\"o kernel
$S_{q,t}$ is continuous on  $H^s_{0,q}(M)$.
\end{prop}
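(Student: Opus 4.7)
The plan is to exploit the identity $S_{q,t} = I - \dbarbst\dbarb G_{q,t}$ and to show that $w := \dbarbst\dbarb G_{q,t}\vp$ satisfies the $H^s$-bound $\|w\|_s \leq C_{t,s}\|\vp\|_s$, which yields $\|S_{q,t}\vp\|_s \leq (1+C_{t,s})\|\vp\|_s$. Three properties of $w$ make the basic estimate directly applicable: since $w \in \Ran(\dbarbst)\subset(\ker\dbarb)^\perp$, we have $w \perp \H^q_t$; from $\dbarbst^2=0$ we get $\dbarbst w = 0$; and applying $\dbarb$ to the Hodge decomposition $\vp = \dbarb\dbarbst G_{q,t}\vp + w + H^q_t\vp$ yields $\dbarb w = \dbarb\vp$.

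I would then follow the pattern of the continuity proof for $G_{q,t}$ in the preceding subsection. Applying Proposition \ref{prop:basic estimate} combined with Proposition \ref{prop:controlling the elliptic terms} at Sobolev level $s$ to $\Lambda^s w$ gives
\[
\|w\|_s^2 \leq C_t \norm\Lambda^s w\norm_t^2 \leq \frac{K}{t}\Qbt(\Lambda^s w,\Lambda^s w) + C_{t,s}\|w\|_{s-1}^2.
\]
Commuting $\Lambda^s$ past $\dbarb$ and $\dbarbst$, and invoking Lemma \ref{lem: dbarbspm computation} to bound $[\dbarbst,\Lambda^s]$ as an order-$s$ operator with $t$-dependent constants, together with $\dbarb w = \dbarb\vp$ and $\dbarbst w = 0$, yields
\[
\Qbt(\Lambda^s w,\Lambda^s w) \leq C\|\dbarb\vp\|_s^2 + C_t\|w\|_s^2.
\]
For $t \geq T_s$ large enough, one absorbs $(KC_t/t)\|w\|_s^2$ into the left side, leaving $\|w\|_s^2 \leq C_{t,s}(\|\vp\|_s^2 + \|w\|_{s-1}^2)$. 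An induction on $s$ starting from the base case $\|w\|_0 \leq \|\vp\|_0$ (which follows from the $L^2$-orthogonality in the Hodge decomposition, together with the norm equivalence \eqref{eqn:norm equivalence}) concludes the estimate.

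The main obstacle is that this calculation presumes $\Lambda^s w \in \Dom(\dbarb)\cap\Dom(\dbarbst)$, which is not automatic: a priori, $w = \dbarbst(\dbarb G_{q,t}\vp)$ lies only in $H^{s-1}$ since $\dbarbst$ generally loses one derivative on $\dbarb G_{q,t}\vp \in H^s$. This is handled by elliptic regularization exactly as in the $G_{q,t}$ continuity proof: work with $w_\delta := \dbarbst\dbarb\Gqdt\vp$, which lies in $H^s$ because $\Gqdt\vp \in H^{s+2}$, derive the above estimate uniformly in $\delta$ (any $\delta$-dependent correction in $\dbarb w_\delta$ coming from the $\delta Q_{d_b}$ term in $\Box_{b,t}^\delta$ is absorbable via the uniform bound $\|\Gqdt\vp\|_s \leq K_t\|\vp\|_s$ already established), and pass to the weak limit $\delta \to 0$ along a convergent subsequence in $H^s$.
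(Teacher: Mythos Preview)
Your overall strategy (write $S_{q,t}=I-\dbarbst\dbarb G_{q,t}$, set $w=\dbarbst\dbarb G_{q,t}\vp$, reduce to a uniform $H^s$-bound on the regularized $w_\delta=\dbarbst\dbarb\Gqdt\vp$, and pass to the limit) is exactly the paper's. The gap is in how you estimate $\|w\|_s$. Feeding $\Lambda^s w$ into the basic estimate controls $\|w\|_s$ by $\|\dbarb w\|_s+\|\dbarbst w\|_s$ (modulo lower order and absorbable terms). With $\dbarbst w=0$ and $\dbarb w=\dbarb\vp$ this produces $\|\dbarb\vp\|_s$ on the right, which is \emph{not} bounded by $\|\vp\|_s$ since $\dbarb$ is first order. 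Your displayed inequality $\Qbt(\Lambda^s w,\Lambda^s w)\le C\|\dbarb\vp\|_s^2+C_t\|w\|_s^2$ is correct, but the jump from there to $\|w\|_s^2\le C_{t,s}(\|\vp\|_s^2+\|w\|_{s-1}^2)$ is not; at best your argument yields $\|w\|_s\le C_{t,s}\|\vp\|_{s+1}$, i.e.\ continuity $H^{s+1}\to H^s$, one derivative short.

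The paper avoids this loss by \emph{not} invoking the basic estimate on $w$ at all. Instead it exploits the factorization $w=\dbarbst u$ with $u=\dbarb\Gqdt\vp$ and integrates by parts: one writes
\[
\norm\Lambda^s w\norm_t^2=\la\Lambda^s\dbarb w,\Lambda^s u\ra_t+\text{commutators involving }[\dbarb,\Lambda^s]\text{ and }[\Lambda^s,\dbarbst],
\]
and then moves $\dbarb$ back across in the principal term. All commutator terms are controlled by $\|w\|_s\cdot\|u\|_s$ or $\|\vp\|_s\cdot\|u\|_s$, and here the previously established $H^s$-continuity of $\dbarb\Gqdt$ (uniformly in $\delta$) is the crucial input, giving $\|u\|_s\le C_{t}\|\vp\|_s$. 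This yields $\norm\Lambda^s w\norm_t^2\le C_{s,t}(\norm\Lambda^s\vp\norm_t+\norm\Lambda^s u\norm_t)\norm\Lambda^s w\norm_t$ and hence $\|w\|_s\le C_{s,t}\|\vp\|_s$ directly, with no induction on $s$ needed. To repair your argument you must replace the appeal to the basic estimate by this integration-by-parts step that leverages the already-proved bound on $\dbarb G_{q,t}$.
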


\begin{proof}This argument uses ideas from \cite{BoSt90}.
Given $\vp\in L^2_{0,q}(M)$, we know that $\dbarbst\dbarb G_{q,t} \vp\in L^2_{0,q}(M)$, but we have no quantitative bound. However,
\[
\norm \dbarbst\dbarb G_{q,t} \vp \norm_t^2 =
\la \dbarb\dbarbst\dbarb G_{q,t}\vp , \dbarb G_{q,t} \vp \ra_t = \la \dbarb \vp, \dbarb G_{q,t} \vp \ra_t
\leq \norm \vp \norm_t \norm \dbarbst\dbarb G_{q,t} \vp \norm_t.
\]
This proves continuity in $L^2_{0,q}(M)$.

Now let $s>0$. It suffices to show
\begin{equation}\label{eqn:good enough bound for Szego continuity in H^s}
\norm \Lambda^s \dbarbst\dbarb G_{q,t}\vp \norm_t^2 \leq C_{s,t}\norm \Lambda^s \vp\norm_t^2.
\end{equation}
We cannot simply integrate by parts as in the $L^2$-case because we do not know if $\Lambda^s \dbarbst\dbarb S_{q,t}\vp$ is finite. As above, we can
avoid this issue by an elliptic regularity argument. Using the operators $\Gqdt$ from \S6.4, we have (if $\delta$ is small enough)
\begin{align*}
\norm \Lambda^s \dbarbst\dbarb\Gqdt \vp \norm_t^2
= &\la \Lambda^s \dbarb\dbarbst\dbarb \Gqdt \vp, \Lambda^s \dbarb\Gqdt\vp \ra_t
+ \la [\dbarb,\Lambda^s]\dbarbst\dbarb\Gqdt\vp, \Lambda^s\dbarb \Gqdt \vp \ra_t \\
&+ \la \Lambda^s \dbarbst\dbarb\Gqdt \vp, [\Lambda^s,\dbarbst] \dbarb\Gqdt\vp \ra_t \\
\leq& C_{s,t}( \norm \Lambda^s \vp \norm_t + \norm\Lambda^s \dbarb \Gqdt \vp \norm_t) \norm\Lambda^s \dbarbst\dbarb\Gqdt \vp \norm_t.
\end{align*}
Using that the continuity of $\dbarb \Gqdt$ in $H^s_{0,q}(M)$ is uniform in $\delta$ (for small $\delta$), we have
\begin{equation}\label{eqn:Szego regularity except for delta}
\norm \Lambda^s \dbarbst\dbarb\Gqdt \vp \norm_t
\leq C_{s,t}( \norm \Lambda^s \vp \norm_t + \norm\Lambda^s \dbarb G_{q,t} \vp \norm_t)
\leq C_{s,t}  \norm \Lambda^s \vp \norm_t.
\end{equation}
As earlier, we can take an appropriate limit as $\delta\to 0$ to establish the bound in \eqref{eqn:Szego regularity except for delta} with $\delta=0$.
\end{proof}

%
%
\subsection{Results for levels $(0,q-1)$ and $(0,q+1)$.}
We now show continuity of the canonical solution operators $G_{q,t}\dbarbst: H^s_{0,q+1}(M)\to H^s_{0,q}(M)$ and
$G_{q,t}\dbarb :H^s_{0,q-1}(M)\to H^s_{0,q}(M)$, and the Szeg\"o projection $S_{q-1,t} = I - \dbarbst G_{q,t}\dbarb: H^s_{0,q-1}(M) \to H^s_{0,q-1}(M)$. We cannot
express the Szeg\"o kernel of $(0,q+1)$-forms in terms of $G_{q,t}$
because the only candidate  is
$\dbarb G_{q,t}\dbarbst$, but this object annihilates $t$-harmonic forms (which ought to remain unchanged by $S_{q+1,t}$). Since $H^{s+1}_{0,q-1}(M)$ is dense in $H^s_{0,q-1}(M)$ and $G_{q,t}$ preserves $H^s_{0,q}(M)$, we may assume that $\vp\in H^{s+1}_{0,q-1}(M)$. Then
\begin{align*}
\norm \Lambda^s G_{q,t}\dbarb\vp \norm_t^2
&= \la \hspace{-.375cm}\underbrace{\dbarbst G_{q,t}}_{\text{bounded in } H^s} \hspace{-.4cm}\Lambda^s G_{q,t}\dbarb \vp, \Lambda^s\vp \ra_t
+ \la \Lambda^s G_{q,t} \dbarb\vp, [\Lambda^s, G_{q,t}\dbarb] \vp \ra_t \\
&\leq C_{s} \norm \Lambda^s G_{q,t} \dbarb\vp \norm_t \norm \Lambda^s \vp \norm_t.
\end{align*}
The right hand side is finite since $\dbarb\vp \in H^s_{0,q}(M)$ by assumption. Thus, $G_{q,t}\dbarb:H^s_{0,q-1}(M)\to H^s_{0,q}(M)$ is bounded.
A similar argument shows that  $G_{q,t}\dbarbst: H^s_{0,q+1}(M)\to H^s_{0,q}(M)$ is continuous.

For the Szeg\"o projection, we investigate the boundedness of
\begin{multline*}
\norm \Lambda^s \dbarbst G_{q,t} \dbarb \vp \norm_t^2
= \la \Lambda^s \dbarb\dbarbst G_{q,t} \dbarb\vp, \Lambda^s G_{q,t}\dbarb\vp \ra_t
+ \la \Lambda^s \dbarbst G_{q,t} \dbarb\vp, [\Lambda^s,\dbarbst]G_{q,t}\dbarb\vp \ra_t \\
+ \la[\dbarbst,\Lambda^s]\dbarbst G_{q,t}\dbarb\vp,\Lambda^s G_{q,t}\dbarb\vp \ra_t.
\end{multline*}
Since $\dbarb\vp$ is $\dbarb$-closed, $ \dbarb\dbarbst G_{q,t} \dbarb\vp = \dbarb\vp$, so
\begin{multline*}
 \la \Lambda^s \dbarb\dbarbst G_{q,t} \dbarb\vp, \Lambda^s G_{q,t}\dbarb\vp \ra_t
 =\\ \la \Lambda^s\vp, \Lambda^s \dbarbst G_{q,t} \dbarb\vp \ra_t +
 \la [\Lambda^s,\dbarb] \vp, \Lambda^s G_{q,t}\dbarb \vp \ra_t + \la \Lambda^s\vp, [\Lambda^s,\dbarbst] G_{q,t}\dbarb\vp \ra_t \\
\leq C_s (\norm\Lambda^s\vp\norm_t \norm \Lambda^s \dbarbst G_{q,t} \dbarb\vp\norm_t + \norm\Lambda^s\vp\norm_t ^2).
\end{multline*}
Thus, we have
\begin{align*}
\norm \Lambda^s \dbarbst G_{q,t} \dbarb \vp \norm_t^2
\leq C_{s,t} (\norm\Lambda^s\vp\norm_t \norm \Lambda^s \dbarbst G_{q,t} \dbarb\vp\norm_t + \norm\Lambda^s\vp\norm_t ^2).
\end{align*}
Using a small constant/large constant argument and absorbing terms, we have the continuity of the Szeg\"o projection
in $H^s_{0,q-1}(M)$.

The continuity of the solution operator $\dbarbst G_{q,t}$ immediately gives closed range of  $\dbarb$ from $H^s_{0,q-1}(M)$ to $H^s_{0,q}(M)$.
Similarly, the boundedness of the operator $\dbarb G_{q,t}$ immediately gives closed range of $\dbarbs$ from $H^s_{0,q+1}(M)$ to $H^s_{0,q}(M)$.

%
%
\subsection{Exact and global regularity for $\dbarb$.}
In this section, we prove that if $\alpha \in C^\infty_{0,\tilde q+1}(M)$ satisfies $\dbarb\alpha=0$ and $\alpha \perp \H_t^{\tilde q}$,
then there exists $u\in C^\infty_{0,\tilde q}(M)$ so that $\dbarb u = \alpha$ where
$\tilde q = q$ or $q-1$.
We follow the argument
in \cite{Nic06}, Lemma 5.10. We start by showing that if $k$ is fixed and $s>k$, then $H^s_{0,\tilde q}(M) \cap \Null(\dbarb)$ is dense in
$H^k_{0,\tilde q}(M)\cap\Null(\dbarb)$. Let $g\in H^k_{0,\tilde q}(M)\cap \Null(\dbarb)$. Since $C^\infty_{0,\tilde q}(M)$ is dense in $H^k_{0,\tilde q}(M)$,
there exists
a sequence $g_j\in C^\infty_{0,\tilde q}(M)$ so that $g_j\to g$ in $H^k_{0,\tilde q}(M)$. Let $t\geq T_s$ and set
$\tilde g_j = S_{\tilde q,t} g_j$. By the continuity of $S_{\tilde q,t}$ in $H^s_{0,\tilde q}(M)$, $\tilde g_j \in H^s_{0,\tilde q}(M)$. Moreover,
since $g = S_{\tilde q,t} g$, it follows that
\[
\lim_{j\to\infty} \| \tilde g_j - g \|_k^2 = \lim_{j\to\infty}\| S_{\tilde q,t}(g_j-g) \|_k^2 \leq C_{k,t}\lim_{j\to\infty} \| g_j - g \|_k^2 =0.
\]

Next, since $\alpha = \dbarb\dbarbst G_{\tilde q,t}\alpha$ or $\dbarb G_{\tilde q,t}\dbarbst \alpha$
for all sufficiently large $t$, by choosing an appropriate sequence $t_k\to \infty$, there exists
$u_k = \dbarbst G_{\tilde q,t_k}\alpha$ or $G_{\tilde q,t_k}\dbarbst \alpha \in H^k_{0,\tilde q}(M)$
so that $\dbarb u_k = \alpha$. We will construct a sequence $\tilde u_k$ inductively. Let
$\tilde u_1 = u_1$. Assume that $\tilde u_k$ has been defined so that $\tilde u_k \in H^k_{0,\tilde q}(M)$, $\dbarb \tilde u_k = \alpha$, and
$\| \tilde u_k - \tilde u_{k-1} \|_{k-1} \leq 2^{k-1}$. We will now construct $\tilde u_{k+1}$. Note that $\dbarb(u_{k+1} - \tilde u_k)=0$. By the density
argument above, there exists $v_{k+1}\in H^{k+1}_{0,\tilde q}(M)\cap \Null(\dbarb)$ so that if $\tilde u_{k+1} = u_{k+1} + v_{k+1}$, then
$\| \tilde u_{k+1} - \tilde u_k \|_k \leq 2^{-k}$. Finally, set
\[
u = \tilde u_1 + \sum_{k=1}^\infty (\tilde u_{k+1}-\tilde u_k) = \tilde u_j + \sum_{k=j}^\infty (\tilde u_{k+1}-\tilde u_k), \quad j\in\N.
\]
The sum telescopes and it is clear that $u\in H^j_{0,\tilde q}(M)$ for all $j\in \N$ and $\dbarb u = \alpha$. Thus, $u\in C^\infty_{0,\tilde q}(M)$.

%
%
\section{Proof of Theorem \ref{thm:main theorem for unweighted}}\label{sec:main theorem unweighted}
From (\ref{eqn:norm equivalence}), we know that weighted $L^2(M)$ and $L^2(M)$ are equivalent spaces. Thus, from
Theorem \ref{thm:main theorem for weighted spaces}, we know that $\dbarb:L^2_{0,q-1}(M)\to L^2_{0,q}(M)$ and
$\dbarb:L^2_{0,q}(M)\to L^2_{0,q+1}(M)$ have closed range. Again by H\"ormander, Theorem 1.1.1, this proves that
$\dbarbs:L^2_{0,q}(M)\to L^2_{0,q-1}(M)$ and $\dbarbs:L^2_{0,q+1}(M)\to L^2_{0,q}(M)$ have closed range. Consequently,
the Kohn Laplacian $\Box_b = \dbarb\dbarbs + \dbarbs\dbarb$ has closed range on $ L^2_{0,q}(M)$ and the remainder of
the theorem follows by standard arguments.
This concludes the proof of Theorem \ref {thm:main theorem for unweighted}.

\begin{rem}\label{rem:weighted vs. unweighted bound 2} This is more quantitative discussion of
Remark \ref{rem:weighted vs. unweighted bound}. In particular, from the proof of Theorem \ref{thm:main theorem for unweighted}, we have
the closed range bound for appropriate $(0,q)$-forms $\vp$ (using \eqref{eqn:norm equivalence}),
\[
\|\vp\|_0^2 \leq \frac {1}{c_t} \| \vp \|_{t}^2 \leq \frac C{c_t} \| \dbarb \vp \|_t^2 \leq \frac{C C_t}{c_t} \|\dbarb \vp\|_0^2.
\]
Thus, the closed range constants for $\dbarb$, $\dbarbs$, and $\Box_b$ in unweighted $L^2(M)$ depend on the size of $\lp$ and $\lm$.
\end{rem}

\bibliographystyle{alpha}
\bibliography{mybib}

\end{document}